\newtheorem{defin}{Definition}[section]
\newtheorem{lem}[defin]{Lemma}
\newtheorem{theo}[defin]{Theorem}
\newtheorem{rem}[defin]{Remark}
\newtheorem{prop}[defin]{Proposition}
\newtheorem{ex}[defin]{Example}
\newtheorem{corol}[defin]{Corollary}
\newtheorem{prob}[defin]{Problem}
\DeclareMathOperator{\diag}{diag}
\DeclareMathOperator{\spn}{span}
\DeclareMathOperator{\ran}{range}
\renewcommand{\conj}{\lconj}
\newcounter{typeC}[subsection]
\newcommand{\newType}{\stepcounter{typeC}\subsubsection*{Type \arabic{subsection}.\Roman{typeC}}\label{type:\arabic{subsection}.\Roman{typeC}}}
\newcommand{\Href}[1]{\hyperref[type:#1]{Type #1}}
\newcommand{\Pref}[1]{\hyperref[p:#1]{Type #1}}
\newcommand{\newTypeProof}{\stepcounter{typeC}\subsubsection*{\Href{\arabic{subsection}.\Roman{typeC}}}\label{p:\arabic{subsection}.\Roman{typeC}}}
\renewcommand{\iff}{if and only if }
\newcommand*{\rom}[1]{\expandafter\@slowromancap\romannumeral #1@}
\begin{document}

\begin{titlepage}

\begin{center}
\large

\hfill
\vfill

\begingroup
\huge On Polar Decompositions with Commuting Factors in Indefinite Inner Product Spaces \\ \bigskip 
\endgroup

Julian Kern 

\vfill

\includegraphics[width=6cm]{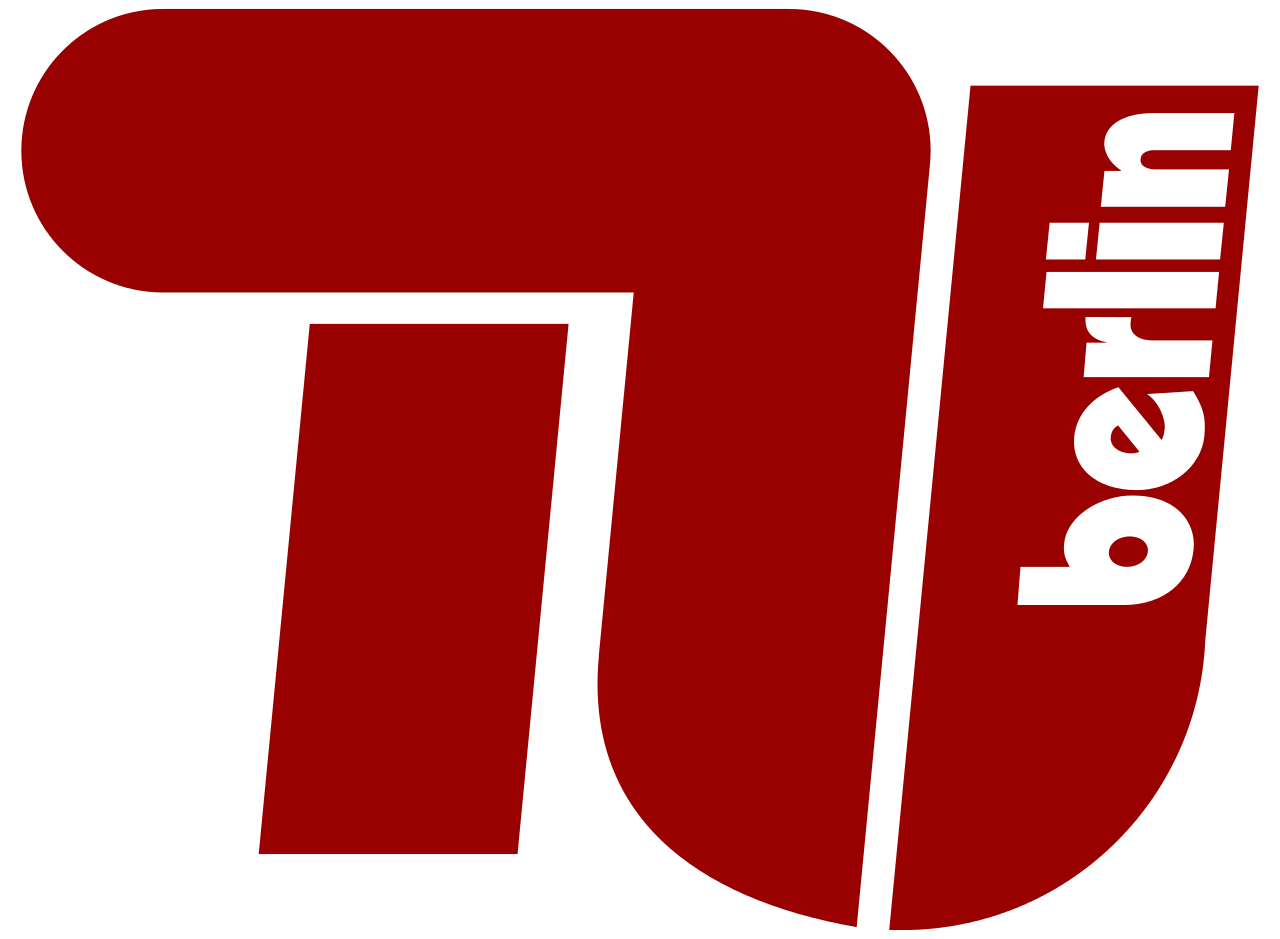} \\ \medskip 

FG Numerische Mathematik\\
Fakultät II -- Mathematik und Naturwissenschaften\\
Technische Universität Berlin\\
\bigskip
Zur Erlangung des akademischen Grades \\
\bigskip
\textbf{Bachelor of Science}
\bigskip \\
Gutachter:\\
Prof. Christian Mehl \\
Dr. Michael Karow \\
\bigskip
Betreuer:\\
Prof. Christian Mehl

\bigskip

Juni, 2018 

\vfill

\end{center}

\end{titlepage}


\newpage\hfill\thispagestyle{empty}
\newpage

\pagenumbering{roman}
\section*{Eidesstattliche Erklärung}

Hiermit erkläre ich, dass ich die vorliegende Arbeit selbstständig und eigenhändig sowie ohne unerlaubte fremde Hilfe und ausschließlich unter Verwendung der aufgeführten Quellen und Hilfsmittel angefertigt habe.
\bigskip
 
\noindent\textit{Berlin, den}

\smallskip
\begin{center}
\begin{tabular}{ c }
\\ \hline
Julian Kern \\
\end{tabular}
\end{center}

\cleardoublepage\hfill\newpage

\begin{abstract}
This thesis examins a generalisation of polar decompositions to indefinite inner product spaces. The necessary general theory is studied and some general results are given. The main part of the thesis focuses on polar decompositions with commuting factors: First, a proof for a generalisation of the link between polar decomposition with commuting factors and normal matrices is given. Then, some properties of such decompositions are studied and it is shown that the commutativity of the factors only depends on the selfadjoint part. Eventually, polar decompositions with commuting factors are studied under similarity transformations that do not alter the structure of the space. For this purpose, normal forms are decomposed and analysed.
\end{abstract}
\begin{verbatim}





\end{verbatim}

\selectlanguage{ngerman}
\begin{abstract}
Die vorliegende Bachelorarbeit behandelt eine mögliche Verallgemeinerung der Polarzerlegung auf Räume, die statt eines (positiv definiten) Skalarprodukts lediglich ein indefinites Produkt besitzen. Die notwendige allgemeine Theorie wird im ersten Abschnitt der Arbeit hergeleitet. Im Hauptteil der Arbeit wird ein Beweis zur Verallgemeinerung der Korrespondenz zwischen Polarzerlegungen mit kommutierenden Faktoren und normalen Matrizen zusammengetragen. Weiter werden einige Eigenschaften solcher Zerlegungen untersucht und insbesondere nachgewiesen, dass die Kommutativität der Faktoren ausschließlich von dem selbstadjungierten Anteil abhängt. Schließlich werden Polarzerlegungen mit kommutierenden Faktoren unter Ähnlichkeitstransformationen betrachtet, die die Struktur des zugrundeliegenden Raumes nicht verändern. In diesem Rahmen werden Normalformen zerlegt und analysiert.
\end{abstract}

\selectlanguage{english}
\cleardoublepage

\tableofcontents

\newpage
\pagenumbering{arabic}

\section{Introduction}

This bachelor thesis discusses  polar decompositions of matrices in indefinite inner product spaces with special interest in polar decompositions with commuting factors.

Every complex number $z = re^{i\varphi}$ can be decomposed into a scaling factor $r\geq 0$ and a rotation through an angle $\varphi\in(-\pi,\pi]$. If $z\neq 0$, this decomposition is unique with $r > 0$, and if $z$ is real, the decomposition is real too, i.e. $\varphi\in\llbrace 0,\pi\rrbrace$. This can be generalised to matrices with entries in $\mathbb{F}\in\llbrace\mathbb{R},\mathbb{C}\rrbrace$. In a Euclidean space, every matrix $X\in\mathbb{F}^{n,n}$ can be decomposed into a product $X = UA$ of an orthogonal (resp.~unitary) matrix $U\in\mathbb{F}^{n,n}$ and a symmetric (resp.~Hermitian) positive semidefinite matrix $A\in\mathbb{F}^{n,n}$. If $X$ is invertible, this decomposition is uniquely determined. As for complex numbers, we can interpret the orthogonal (resp.~unitary) matrix as a rotation and the symmetric (resp.~Hermitian) positive-definite matrix as a generalised scaling, since $A = V^*\Lambda V$, where $V$ is a unitary matrix and $\Lambda$ is a diagonal matrix with positive entries. 

Despite the possibility to decompose $X = UA = \widetilde{A}\widetilde{U}$ in two ways into orthogonal (resp.~unitary) $U,\widetilde{U}\in\mathbb{F}^{n,n}$ and symmetric (resp.~Hermitian) positive semidefinite $A,\widetilde{A}\in\mathbb{F}^{n,n}$ factors, there is no guarantee that $U = \widetilde{U}$ and $A = \widetilde{A}$. This means that the commutativity of the factors is lost when passing from $n=1$ to higher dimensions. One can show that the commutativity is obtained for normal matrices only (cf. Theorem \ref{theo:pol_com_eucli}).\\
 
We study a generalisation of the theory of polar decompositions to spaces with \emph{indefinite inner products}, i.e. spaces without the positive-definiteness of the scalar product. This implies that the terms \emph{normal, orthogonal/unitary} etc.~have to be replaced by the corresponding terms \emph{$H$-normal, $H$-unitary etc.~}in indefinite inner product spaces. The fundamental theory of Indefinite Linear Algebra has been developed in \cite{glr}. The theory of polar decompositions in indefinite inner product spaces has been studied extensively in \cite{br,bmrrr1,BMRRR, bmrrr} by Y. Bolshakov et al. Whereas these papers offer a less restrictive definition of polar decomposition, Mackey et al.~propose a more restrictive definition of a \textquotedblleft computable generalised polar decomposition\textquotedblright{} in \cite{mmt}. In this thesis, we will concentrate on polar decompositions with commuting factors. In order to compensate for this restriction, we will use the less restrictive definition. Analogously to the Euclidean case, $H$-normal matrices are crucial in the examination of $H$-polar decompositions with commuting factors. The problem of characterising these $H$-normal matrices has proved to be very difficult. In fact, it has been shown in \cite{gr} that this problem is wild, i.e.~as hard as the problem of classification up to similarity of pairs of commuting matrices. However, the cases of $H$ admitting a small number of negative eigenvalues have been studied in \cite{gr,hs,HS} and normal forms for one and two negative eigenvalues could be derived. This has led to a factorisation of these forms in \cite{lmmr}. A conclusive answer in the case $\mathbb{F} = \mathbb{C}$ has been given in \cite{mrr}, where an equivalent condition for the existence of $H$-polar decompositions with commuting factors was proved.

The search for polar decompositions in indefinite inner product spaces is motivated by linear optics, where polar decompositions in the 4-dimensional Minkowski space are computed as a necessary and sufficient condition on a matrix to be a feasible transformation (cf.. \cite[Section 6]{BMRRR}). Another application is the solution of generalised Procrustes problems as they occur in multidimensional scaling. These methods are used in psychology to compare a test person's opinion on the similarities and dissimilarities of a finite number of given objects (cf. \cite{k,m}).\\ 

This thesis is divided into four sections. Following the introduction, the second section discusses preliminaries including basic theory on polar decompositions in indefinite inner product spaces. The main section studies $H$-polar decompositions with commuting factors. After having discussed the main result from \cite{mrr}, we will investigate how to characterise the factors of the polar decompositions. The last section will consider some yet unsolved issues regarding polar decompositions (with commuting factors) in indefinite inner product spaces. The remainder of this thesis contains a list of $H$-polar decompositions with commuting (or semicommuting) factors of indecomposable normal matrices. The thesis is mainly based on \cite{mrr} and \cite{lmmr}.

\section{Preliminaries}
In this section, we recall the theory of polar decompositions in Euclidean spaces and introduce a generalisation of this theory to indefinite inner product spaces.

Throughout the thesis, let $\mathbb{F}$ denote the field $\mathbb{R}$ of real numbers or the field $\mathbb{C}$ of complex numbers and let $I_n\in\mathbb{F}^{n,n}$ be the identity matrix. Furthermore, let $\langle\cdot,\cdot\rangle$ denote the standard Euclidean product on $\mathbb{F}^n$, given by
\[
\langle x,y\rangle = x^*y = \sum_{i=1}^n \lconj{x_i}y_i,
\]
where $x = (x_1,\dots,x_n),y = (y_1,\dots, y_n)\in\mathbb{F}^n$ and $x^* = \lconj{x}^T$ is the complex conjugate transpose of $x$.

\subsection{Polar decomposition in Euclidean spaces}

Since all results from this section are well known, only the two main results will be proved for completeness.

\begin{defin}
Let $A\in\mathbb{F}^{n,n}$.
\begin{enumerate}
	\item The uniquely determined matrix $A^* = \lconj{A}^T$, such that
	\[
	\langle Ax, y\rangle = \langle x, A^*y\rangle
	\]
	for all $x,y\in\mathbb{F}^{n}$, is called the \emph{adjoint matrix} of $A$.
	
	\item We call $A$ \emph{selfadjoint} if $A^* = A$.
	\item We call $A$ \emph{unitary} if $A^*A = I_n$.
	\item We call $A$ \emph{normal} if $A^*A = AA^*$, i.e. if $A$ commutes with its adjoint.
\end{enumerate}
\end{defin}

\begin{rem}
\normalfont
A matrix $A\in\mathbb{F}^{n,n}$ is unitary if and only if it preserves the inner product, i.e. 
\[
\langle x, A^*Ay\rangle = \langle Ax, Ay\rangle = \langle x, y\rangle.
\]
\end{rem}

\begin{theo}
A selfadjoint matrix has only real eigenvalues.
\end{theo}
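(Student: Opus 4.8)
The plan is to prove the classical fact that a selfadjoint matrix $A \in \mathbb{F}^{n,n}$ has only real eigenvalues by the standard eigenvector-pairing argument. First I would take an eigenvalue $\lambda \in \mathbb{C}$ of $A$ together with a nonzero eigenvector $x$, so that $Ax = \lambda x$. The key computation is to evaluate $\langle Ax, x\rangle$ in two ways: on the one hand $\langle Ax, x\rangle = \langle \lambda x, x\rangle = \lconj{\lambda}\langle x, x\rangle$, and on the other hand, using selfadjointness and the defining property of the adjoint, $\langle Ax, x\rangle = \langle x, A^*x\rangle = \langle x, Ax\rangle = \langle x, \lambda x\rangle = \lambda\langle x, x\rangle$. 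Since $x \neq 0$ we have $\langle x, x\rangle = x^*x > 0$, so we may divide to conclude $\lconj{\lambda} = \lambda$, i.e.\ $\lambda \in \mathbb{R}$.

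One technical point to address is the case $\mathbb{F} = \mathbb{R}$: here the eigenvalue $\lambda$ need not be real a priori, so the cleanest route is to regard $A$ as a complex matrix $A \in \mathbb{C}^{n,n}$ (its entries are real, hence in particular complex) and carry out the argument over $\mathbb{C}^n$ with the complex eigenvector; selfadjointness $A^* = \lconj{A}^T = A$ is unaffected by this reinterpretation since a real matrix satisfies $A^T = A$ exactly when $\lconj{A}^T = A$. With this understood, the computation above goes through verbatim.

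I do not anticipate a genuine obstacle — the only thing to be careful about is the bookkeeping of conjugates in $\langle\cdot,\cdot\rangle$, which is conjugate-linear in the first argument under the convention $\langle x, y\rangle = x^*y$ fixed in the preliminaries, so $\langle \lambda x, x\rangle = \lconj{\lambda}\langle x, x\rangle$ rather than $\lambda\langle x, x\rangle$. Getting this placement right is exactly what forces $\lconj{\lambda} = \lambda$. The argument uses only the definition of the adjoint, the hypothesis $A^* = A$, and positive-definiteness of the Euclidean inner product, all of which are available from the material preceding the statement.
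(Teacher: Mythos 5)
Your argument is correct. Note that the paper does not prove this statement at all — it is listed among the well-known Euclidean preliminaries whose proofs are omitted — and your proof is precisely the standard one: pairing $\langle Ax,x\rangle$ against itself via $A^*=A$, using positive-definiteness of $\langle\cdot,\cdot\rangle$ to cancel $\langle x,x\rangle>0$, with the conjugate correctly placed in the first argument, and with the real case handled by viewing $A$ as a complex Hermitian matrix.
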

\begin{theo}[Unitary diagonalisation]\label{theo:unit_diag}
A matrix is normal \iff it is unitarily diagonalisable.
\end{theo}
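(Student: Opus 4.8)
The plan is to prove both directions of the equivalence, the easy one first. Suppose $A$ is unitarily diagonalisable, so $A = U^*DU$ with $U$ unitary and $D$ diagonal. Then $A^* = U^*\lconj{D}U$, and since diagonal matrices commute, $D\lconj{D} = \lconj{D}D$; conjugating by $U$ gives $AA^* = A^*A$, so $A$ is normal. This direction is immediate and serves mainly to fix notation.

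For the substantive direction, I would proceed by induction on $n$, the size of $A$. The case $n=1$ is trivial. For the inductive step, let $\lambda$ be an eigenvalue of $A$ (which exists over $\mathbb{C}$; if $\mathbb{F} = \mathbb{R}$ one must be slightly careful, but since $A$ is normal one can argue via the complexification or note that the statement is usually read over $\mathbb{C}$) with a unit eigenvector $v$, so $Av = \lambda v$. The key lemma to establish is that $v$ is then also an eigenvector of $A^*$, with $A^*v = \lconj{\lambda}v$: this follows from normality, since $\|(A^* - \lconj{\lambda}I)v\|^2 = \langle (A - \lambda I)v, (A - \lambda I)v\rangle = 0$ using $A^*A = AA^*$ and expanding. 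Consequently the orthogonal complement $v^\perp$ is invariant under both $A$ and $A^*$: if $w \perp v$ then $\langle v, Aw\rangle = \langle A^*v, w\rangle = \lambda\langle v, w\rangle = 0$, and symmetrically for $A^*$. Restricting $A$ to $v^\perp$ gives an $(n-1)$-dimensional normal operator (the restriction of the adjoint is the adjoint of the restriction, because $v^\perp$ reduces $A$), so by the induction hypothesis it is unitarily diagonalisable; extending by $v$ yields an orthonormal eigenbasis for $A$ on all of $\mathbb{F}^n$, which is precisely unitary diagonalisability.

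I expect the main obstacle to be the bookkeeping around the reducing subspace: one must check carefully that $v^\perp$ is invariant under \emph{both} $A$ and $A^*$ (invariance under one alone would not let us apply the inductive hypothesis, since we need the restriction to remain normal), and that the adjoint of the restricted operator is the restriction of $A^*$. Once the eigenvector-transfer lemma ($Av = \lambda v \implies A^*v = \lconj{\lambda}v$) is in hand, everything else is routine. A cosmetic alternative is to invoke Schur's triangularisation theorem to get $A = U^*TU$ with $T$ upper triangular, and then show $T$ normal forces $T$ diagonal by comparing diagonal entries of $TT^*$ and $T^*T$ row by row; this avoids the explicit induction but hides it inside Schur's theorem.
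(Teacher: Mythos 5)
Your argument is correct, and in fact the paper offers no proof to compare it with: Theorem \ref{theo:unit_diag} is one of the results the preliminaries section explicitly declares ``well known'' and states without proof (only the polar decomposition corollary and Theorem \ref{theo:pol_com_eucli} are proved there). Your induction is the standard spectral-theorem argument, and the two key points you isolate are exactly the right ones: the eigenvector-transfer lemma $Av=\lambda v\Rightarrow A^*v=\lconj{\lambda}v$ via $\|(A^*-\lconj{\lambda}I)v\| = \|(A-\lambda I)v\|$ for the normal matrix $A-\lambda I$, and the fact that $v^\perp$ is invariant under both $A$ and $A^*$ so that the restriction stays normal; the Schur-triangularisation variant you sketch is an equally valid packaging of the same content. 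One small caution: your parenthetical remedy for $\mathbb{F}=\mathbb{R}$ via complexification does not actually rescue the statement over the reals --- a plane rotation is real and normal but has no real eigenvector, so it is not orthogonally diagonalisable; over $\mathbb{R}$ one only gets a block-diagonal form with $2\times 2$ rotation-type blocks. Your second instinct, that the equivalence as stated is to be read over $\mathbb{C}$ (with the real case reserved for symmetric matrices), is the correct one, and is how the paper implicitly uses it.
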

\begin{theo}[Simultaneous diagonalisation]
Two diagonalisable matrices $A,B\in\mathbb{F}^{n,n}$ commute \iff they are simultaneously diagonalisable, i.e. there exists a unitary $U\in\mathbb{F}^{n,n}$ such that $U^*AU$ and $U^*BU$ are diagonal.
\end{theo}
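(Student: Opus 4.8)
The plan is to prove the two implications separately, with the forward direction (commuting implies simultaneously diagonalisable) carrying the real content. For the backward direction, suppose $U$ is unitary with $U^*AU = D_A$ and $U^*BU = D_B$ diagonal. Any two diagonal matrices commute, so $D_AD_B = D_BD_A$, and therefore $AB = (UD_AU^*)(UD_BU^*) = UD_AD_BU^* = UD_BD_AU^* = BA$, using $U^*U = I_n$. Hence $A$ and $B$ commute.

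For the forward direction, assume $A,B$ are diagonalisable with $AB = BA$. The key observation is that every eigenspace $E_\lambda := \ker(A - \lambda I_n)$ of $A$ is $B$-invariant: if $Av = \lambda v$, then $A(Bv) = B(Av) = \lambda(Bv)$, so $Bv \in E_\lambda$. Since $A$ is diagonalisable, $\mathbb{F}^n = \bigoplus_\lambda E_\lambda$, the sum ranging over the distinct eigenvalues of $A$. On each summand, the restriction $B|_{E_\lambda}$ is still diagonalisable, because its minimal polynomial divides the minimal polynomial of $B$, which, $B$ being diagonalisable, is a product of distinct linear factors. Thus each $E_\lambda$ admits a basis of eigenvectors of $B|_{E_\lambda}$, and each such vector is automatically an eigenvector of $A$ with eigenvalue $\lambda$. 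Taking the union of these bases over all $\lambda$ produces a basis of $\mathbb{F}^n$ consisting of common eigenvectors of $A$ and $B$, and assembling them as the columns of a matrix diagonalises $A$ and $B$ simultaneously.

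The main obstacle is to arrange that the resulting transformation matrix $U$ is \emph{unitary} and not merely invertible. The remedy is to choose the bases more carefully: by Theorem \ref{theo:unit_diag} (applied to $B|_{E_\lambda}$, which is normal once $A,B$ are) each eigenspace $E_\lambda$ has an \emph{orthonormal} basis of eigenvectors of $B$; and, again because $A$ is normal, eigenspaces $E_\lambda$ and $E_\mu$ for $\lambda \neq \mu$ are mutually orthogonal. Hence the union of these orthonormal bases is an orthonormal basis of $\mathbb{F}^n$, so the matrix $U$ having them as columns is unitary, and $U^*AU$, $U^*BU$ are both diagonal. (Without a normality hypothesis one obtains only an invertible $U$; normality is exactly what upgrades this to unitarity, consistently with Theorem \ref{theo:unit_diag}.)
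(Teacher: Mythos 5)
The paper never proves this theorem --- it is listed among the well-known preliminaries, and in that subsection only the polar decomposition and Theorem \ref{theo:pol_com_eucli} are actually proved --- so there is no in-paper argument to compare with, and I can only judge your proposal on its own merits. The backward direction is fine, and your forward argument is the standard one: each eigenspace $E_\lambda$ of $A$ is $B$-invariant, the restriction $B|_{E_\lambda}$ is diagonalisable because its minimal polynomial divides the squarefree minimal polynomial of $B$, and the union of eigenbases of these restrictions is a common eigenbasis. This correctly produces an \emph{invertible} simultaneous diagonaliser.

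The delicate point, which you rightly identify, is the word \emph{unitary}. With only ``diagonalisable'' as hypothesis, the forward direction with a unitary $U$ is false as printed (take $A = B$ diagonalisable but not normal: they commute, yet by Theorem \ref{theo:unit_diag} no unitary $U$ diagonalises $A$), so the statement needs either ``invertible $U$'' or the extra hypothesis that $A$ and $B$ are normal. Your unitarity upgrade assumes normality, and your closing parenthesis concedes exactly this; in effect you prove the corrected statement, which is also all the paper ever uses, since the proof of Theorem \ref{theo:pol_com_eucli} invokes only the easy direction. In the normal case two facts are asserted without justification and each deserves a line: that eigenspaces of the normal matrix $A$ for distinct eigenvalues are mutually orthogonal, and that $B|_{E_\lambda}$ is again normal (normality does not restrict automatically to an arbitrary invariant subspace). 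For the latter, note that the orthogonal projection $P_\lambda$ onto $E_\lambda$ is a polynomial in $A$ by Lagrange interpolation over the distinct eigenvalues, hence commutes with $B$; taking adjoints and using $P_\lambda^* = P_\lambda$ it also commutes with $B^*$, so $E_\lambda$ reduces $B$ and $B|_{E_\lambda}$ is normal. With these two additions your argument is complete.
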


\begin{theo}[Singular value decomposition]
Let $A\in\mathbb{F}^{n,n}$.~Then there exists unitary $V,W\in\mathbb{F}^{n,n}$ and a uniquely determined $\Sigma = \diag(\sigma_1,\dots,\sigma_r,0\dots,0)\in\mathbb{R}^{n,n}$ with ${1\leq r\leq n}$ and $\sigma_1\geq \cdots\geq\sigma_r > 0$ such that
\[
A = V\Sigma W^*. 
\]
\end{theo}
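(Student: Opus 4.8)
The plan is to reduce the singular value decomposition to the unitary diagonalisation of the selfadjoint matrix $A^*A$. First I would observe that $A^*A$ is selfadjoint, since $(A^*A)^* = A^*A$, and positive semidefinite, since for every $x\in\mathbb{F}^n$ we have $\langle x, A^*Ax\rangle = \langle Ax, Ax\rangle\geq 0$. By Theorem \ref{theo:unit_diag} there is a unitary $W = [\,w_1\mid\cdots\mid w_n\,]$ with $W^*A^*AW = \diag(\lambda_1,\dots,\lambda_n)$, and the theorem on eigenvalues of selfadjoint matrices together with the positive semidefiniteness shows that the $\lambda_i$ are real and nonnegative. I would order them so that $\lambda_1\geq\cdots\geq\lambda_r > 0 = \lambda_{r+1} = \cdots = \lambda_n$ (with $r\geq 1$ whenever $A\neq 0$), and set $\sigma_i := \sqrt{\lambda_i}$ for $i = 1,\dots,r$ and $\Sigma := \diag(\sigma_1,\dots,\sigma_r,0,\dots,0)\in\mathbb{R}^{n,n}$.

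Next I would construct $V$. For $i = 1,\dots,r$ put $v_i := \sigma_i^{-1}Aw_i$. Using $\langle Aw_i, Aw_j\rangle = \langle w_i, A^*Aw_j\rangle = \lambda_j\,\langle w_i, w_j\rangle = \lambda_j\delta_{ij}$ one checks at once that $v_1,\dots,v_r$ are orthonormal. I would then extend this list to an orthonormal basis $v_1,\dots,v_n$ of $\mathbb{F}^n$ by Gram--Schmidt (nothing is added if $r = n$) and let $V := [\,v_1\mid\cdots\mid v_n\,]$, which is unitary.

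To verify $A = V\Sigma W^*$ I would instead check the equivalent identity $AW = V\Sigma$ column by column. For $i\leq r$, the $i$-th column of $AW$ is $Aw_i = \sigma_i v_i$, which is exactly the $i$-th column of $V\Sigma$. For $i > r$ we have $\|Aw_i\|^2 = \langle w_i, A^*Aw_i\rangle = \lambda_i = 0$, so $Aw_i = 0$, which matches the zero column of $V\Sigma$. Hence $AW = V\Sigma$. For uniqueness of $\Sigma$, note that any decomposition $A = V\Sigma W^*$ with $\Sigma$ real diagonal gives $A^*A = W\Sigma^2 W^*$ (because $\Sigma^* = \Sigma$), so the $\sigma_i^2$ are precisely the eigenvalues of $A^*A$ counted with multiplicity; requiring the nonzero ones to be listed in decreasing order then determines $\Sigma$, and shows $r = \operatorname{rank}(A^*A) = \operatorname{rank}(A)$.

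I do not expect a genuine obstacle, as this is a classical argument; the only points needing a little care are the bookkeeping with the inner-product convention $\langle x,y\rangle = x^*y$ (conjugate-linear in the first argument), the legitimacy of the Gram--Schmidt extension, and the remark $\Sigma^* = \Sigma$ that makes the uniqueness step work.
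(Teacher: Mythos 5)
Your argument is correct, but there is nothing in the paper to compare it against: the paper explicitly states that the results of this preliminary section are well known and proves only the polar-decomposition corollary and Theorem \ref{theo:pol_com_eucli}, so the SVD is quoted without proof. What you give is the standard proof, and it is sound: $A^*A$ is selfadjoint and positive semidefinite, Theorem \ref{theo:unit_diag} yields a unitary $W$ diagonalising it (for $\mathbb{F}=\mathbb{R}$ this is the orthogonal diagonalisation of a symmetric matrix, so the real case causes no trouble), the vectors $v_i=\sigma_i^{-1}Aw_i$ are orthonormal by the computation $\langle Aw_i,Aw_j\rangle=\lambda_j\delta_{ij}$, the Gram--Schmidt extension gives a unitary $V$, and the column-by-column verification of $AW=V\Sigma$ together with $Aw_i=0$ for $i>r$ is exactly right. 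Your uniqueness step via $A^*A=W\Sigma^2W^*$ is also correct and in fact goes slightly beyond what the paper records, since it identifies $r=\operatorname{rank}(A)$. The only caveat is the degenerate case $A=0$, where the statement's requirement $1\leq r\leq n$ with $\sigma_r>0$ cannot be met; you flag this with your parenthetical ``$r\geq 1$ whenever $A\neq 0$'', and the defect lies in the formulation of the theorem rather than in your proof. Note also that this construction is precisely what makes the paper's subsequent proof of the polar decomposition work, since there $S=W\Sigma W^*$ and $U=VW^*$ are built from the same $V,\Sigma,W$.
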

\begin{corol}[Polar decomposition]
Let $A\in\mathbb{F}^{n,n}$.~Then there exists an unitary ${U\in\mathbb{F}^{n,n}}$ and a uniquely determined selfadjoint positive-semidefinite ${S\in\mathbb{F}^{n,n}}$ such that
\[
A = US.
\]
If $A$ is nonsingular, $U$ is uniquely determined and $S$ is positive-definite.
\end{corol}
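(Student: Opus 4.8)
The plan is to derive the polar decomposition as a direct corollary of the singular value decomposition, exactly as the placement of the statement suggests. Given $A\in\mathbb{F}^{n,n}$, I would first invoke the SVD to write $A = V\Sigma W^*$ with $V,W\in\mathbb{F}^{n,n}$ unitary and $\Sigma = \diag(\sigma_1,\dots,\sigma_r,0,\dots,0)$ having nonnegative diagonal entries. The trick is to insert $W^*W = I_n$ in the middle: $A = V W^* W \Sigma W^* = (VW^*)(W\Sigma W^*)$. Then I would set $U := VW^*$ and $S := W\Sigma W^*$. Since $V$ and $W$ are unitary, so is $U$ (the product of unitaries is unitary); and $S$ is selfadjoint because $S^* = W\Sigma^* W^* = W\Sigma W^* = S$ using that $\Sigma$ is real and diagonal. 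Positive semidefiniteness of $S$ follows because $S$ is unitarily similar to $\Sigma$, whose eigenvalues $\sigma_1,\dots,\sigma_r,0,\dots,0$ are all $\geq 0$; equivalently $\langle x, Sx\rangle = \langle W^*x, \Sigma W^*x\rangle = \sum_i \sigma_i |(W^*x)_i|^2 \geq 0$.

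Next I would address uniqueness of $S$. Suppose $A = US = U'S'$ with $U,U'$ unitary and $S,S'$ selfadjoint positive semidefinite. Then $A^*A = S^*U^*US = S^2$ and likewise $A^*A = (S')^2$, so $S$ and $S'$ are both positive semidefinite selfadjoint square roots of the same matrix $A^*A$. Using unitary diagonalisation (Theorem~\ref{theo:unit_diag}) of the normal matrix $A^*A$, one sees that a positive semidefinite selfadjoint square root is unique: diagonalise $A^*A = Q D Q^*$ with $D$ diagonal with nonnegative entries, observe that any positive semidefinite square root must commute with $A^*A$ hence (by simultaneous diagonalisation) is of the form $Q D' Q^*$ with $(D')^2 = D$, and $D' \geq 0$ forces $D' = D^{1/2}$ entrywise. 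Hence $S = S' = (A^*A)^{1/2}$.

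For the nonsingular case, if $A$ is invertible then all $\sigma_i > 0$ and $r = n$, so $\Sigma$ is invertible with positive diagonal, whence $S = W\Sigma W^*$ is positive definite. Moreover $U = AS^{-1}$ is then forced by $A = US$, giving uniqueness of $U$ as well.

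The main obstacle — though a mild one at this level — is the uniqueness of the positive semidefinite selfadjoint square root of $A^*A$, since this is the only step that is not pure bookkeeping; it relies on the spectral theorem (Theorem~\ref{theo:unit_diag}) together with simultaneous diagonalisation to argue that any positive semidefinite square root is diagonalised by the same unitary and is therefore uniquely determined by the nonnegativity constraint. Everything else — that products of unitaries are unitary, that $W\Sigma W^*$ is selfadjoint and positive semidefinite, and that the factorisation exists — is an immediate rearrangement of the SVD.
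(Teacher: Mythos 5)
Your existence argument is precisely the paper's proof: the paper also takes an SVD $A = V\Sigma W^*$, sets $U := VW^*$ and $S := W\Sigma W^*$, and checks that $U$ is unitary, that $S$ is selfadjoint and positive semidefinite (being unitarily similar to $\Sigma$), and that $US = A$. Where you go beyond the paper is the uniqueness part: the paper explicitly proves only existence and defers uniqueness to a citation (Theorem~1.29 of \cite{h}), whereas you supply a self-contained argument that $S$ must equal the unique positive semidefinite square root of $A^*A$, via the spectral theorem and simultaneous diagonalisation, and then deduce uniqueness of $U = AS^{-1}$ in the nonsingular case. That extra argument is correct in substance; the only point worth tightening is the step ``hence is of the form $QD'Q^*$ with the \emph{same} $Q$'': simultaneous diagonalisation gives you \emph{some} unitary diagonalising both $S$ and $A^*A$ at once, not necessarily the $Q$ you first chose, so it is cleaner to say that $S$ commutes with $A^*A$, hence preserves each eigenspace of $A^*A$, and on the eigenspace for the eigenvalue $\lambda\geq 0$ the constraints $S\geq 0$ and $S^2 = \lambda I$ force $S = \sqrt{\lambda}\, I$ there; this pins down $S = (A^*A)^{1/2}$ uniquely. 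With that small rephrasing your proof is a complete version of the statement, slightly stronger in scope than what the paper actually writes out.
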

\begin{proof}
In this thesis we prove only the existence of the polar decomposition. The uniqueness of the decomposition follows from \cite[Theorem 1.29]{h}. Let ${A = V\Sigma W^*}$ be a singular value decomposition of $A$ and define
\[
U := VW^*\quad\text{ and }\quad S = W\Sigma W^*.
\]
Then $U$ is unitary as a product of unitary matrices and $S$ is positive-semidefinite and selfadjoint since it is unitarily similar to $\Sigma$. Furthermore, it is
\[
US = (VW^*)(W\Sigma W^*) = V\Sigma W^* = A.\qedhere
\]
\end{proof}
\begin{rem}\normalfont
The above decomposition is also called a \emph{right} polar decomposition. One may deduce from the proof the decomposition of $A$ into
\[
A = S'U
\]
with $S' := USU^*$ into a \emph{left} polar decomposition. Since one can easily be converted into the other (and thus share the same properties), we can concentrate on the right polar decomposition.
\end{rem}

We are now able to state and prove a theorem that characterises polar decompositions with commuting factors in Euclidean spaces. It is relevant for us, since it motivates the main result in section \ref{sec:commuting_factors}.

\begin{theo}\label{theo:pol_com_eucli}
Let $A\in\mathbb{F}^{n,n}$. The following assertions are equivalent.
\begin{enumerate}[label=(\roman*)]
	\item $A$ is normal.
	\item There exists a polar decomposition of $A$ with commuting factors.
	\item There exists an unitary matrix $V\in\mathbb{F}^{n,n}$ such that $A = VA^*$.
\end{enumerate}
\end{theo}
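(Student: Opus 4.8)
The plan is to prove the three equivalences by establishing the cycle $(i)\Rightarrow(iii)\Rightarrow(ii)\Rightarrow(i)$, relying on unitary diagonalisation (Theorem~\ref{theo:unit_diag}) as the main engine and on the uniqueness clause of the polar decomposition.

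For $(i)\Rightarrow(iii)$, suppose $A$ is normal. By Theorem~\ref{theo:unit_diag} there is a unitary $W$ with $A = W\Lambda W^*$, where $\Lambda=\diag(\lambda_1,\dots,\lambda_n)$. Writing each nonzero eigenvalue in polar form $\lambda_j = r_j e^{i\varphi_j}$ with $r_j>0$, and setting $\varphi_j=0$ whenever $\lambda_j=0$, define $D=\diag(e^{i\varphi_1},\dots,e^{i\varphi_n})$ and $V := WDW^*$. Then $V$ is unitary, and since $\Lambda = D\,\lconj{D}\,\Lambda = D\,\lconj{\Lambda}$ entrywise (because $\lambda_j = e^{i\varphi_j}\lconj{\lambda_j}$ for every $j$, including $\lambda_j=0$), we get $A = W\Lambda W^* = WD\lconj{\Lambda}W^* = (WDW^*)(W\lconj{\Lambda}W^*) = VA^*$, using that $A^* = W\lconj{\Lambda}W^*$. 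In the real case one takes $\varphi_j\in\{0,\pi\}$ so that $D$ is real orthogonal; this is where one must check that a real normal matrix need not be diagonalisable over $\mathbb{R}$, so strictly speaking one should instead argue through $(ii)$ directly, or note that the polar factors can still be chosen real by the real singular value decomposition — I would phrase the argument via the SVD-based construction from the proof of the Polar decomposition corollary to stay inside $\mathbb{F}$.

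For $(iii)\Rightarrow(ii)$, assume $A = VA^*$ with $V$ unitary. Take any polar decomposition $A = US$ with $U$ unitary and $S$ selfadjoint positive-semidefinite (it exists by the corollary). Then $A^* = S^*U^* = SU^*$, so $A = VA^* = VSU^*$. I want to extract from this that $S$ commutes with $U$. The clean route: from $A=US$ we get $A^*A = S U^* U S = S^2$, and from $A = VA^*$ we get $AA^* = VA^*AV^* = VS^2V^*$; but also directly $AA^* = US S^* U^* = US^2U^*$. Hence $US^2U^* = VS^2V^*$. This alone is not quite enough, so instead I would use uniqueness: $S$ is \emph{the} positive-semidefinite square root of $A^*A$, and one computes $A^*A = S^2$ while $AA^* = US^2U^*$; on the other hand $A = VA^*$ gives $AA^* = VA^*AV^*=VS^2V^*$. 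The decisive step is to show $U$ itself can be chosen to commute with $S$: writing $A = US$ and substituting into $A = VA^* = VSU^*$ yields $US = VSU^*$, i.e. $U S U = VS$; combined with unitarity of $U$ and $V$ and the functional calculus for the selfadjoint $S$ (diagonalise $S = Q\Sigma Q^*$ and analyse the block structure of $Q^*UQ$ with respect to the eigenspaces of $\Sigma$), one shows $U$ preserves each eigenspace of $S$, hence commutes with $S$. This functional-calculus/eigenspace-preservation argument is the main obstacle, and it is exactly the place where the Euclidean structure (orthogonality of eigenspaces of a selfadjoint matrix, Simultaneous diagonalisation) is used.

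For $(ii)\Rightarrow(i)$, suppose $A=US=SU$ with $U$ unitary, $S$ selfadjoint positive-semidefinite. Then $A^* = S^*U^* = SU^*$ and a direct computation gives $A^*A = SU^*US = S^2$ and $AA^* = USU^*S = U S^2 U^* $; using $SU=US$ repeatedly, $AA^* = S U U^* S = S^2 = A^*A$, so $A$ is normal. This direction is the easy one and closes the cycle. I would present $(ii)\Rightarrow(i)$ first as a warm-up, then $(i)\Rightarrow(iii)$ via the SVD construction, and finally invest the bulk of the write-up in $(iii)\Rightarrow(ii)$, where the eigenspace-preservation lemma for $S$ has to be spelled out carefully.
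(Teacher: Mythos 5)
Your cycle (i)$\Rightarrow$(iii)$\Rightarrow$(ii)$\Rightarrow$(i) differs from the paper's (i)$\Rightarrow$(ii)$\Rightarrow$(iii)$\Rightarrow$(i), and two of your legs do not hold as written. First, in (i)$\Rightarrow$(iii) your $V$ is defined with the wrong phases: for $\lambda_j=r_je^{i\varphi_j}$ one has $e^{i\varphi_j}\overline{\lambda_j}=r_j$, which equals $\lambda_j$ only when $\lambda_j$ is real and nonnegative, so $\Lambda\neq D\overline{\Lambda}$ in general (already $A=[\,i\,]$ is a counterexample: your $V=i$ gives $VA^*=1\neq i$). You must take the squared phases, $D=\diag(e^{2i\varphi_1},\dots,e^{2i\varphi_n})$, i.e. $V=U^2$ --- which is exactly how the paper passes from (ii) to (iii). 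Second, and more seriously, the leg (iii)$\Rightarrow$(ii) is not actually proved: the decisive claim, that the unitary factor of an arbitrary polar decomposition $A=US$ preserves the eigenspaces of $S$, is only announced (``diagonalise $S$ and analyse the block structure of $Q^*UQ$''), and you yourself label it the main obstacle; moreover your starting relation $USU=VS$ still contains the unknown $V$, which makes the step needlessly awkward.

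The gap closes in two lines once you use normality inside this leg (which is legitimate, since (iii)$\Rightarrow$(i) is independent of (ii)): from $A=VA^*$ one gets $A^*=AV^*$, hence $AA^*=(AV^*)(VA^*)=A^*A$, exactly the paper's computation. Then for any polar decomposition $A=US$ one has $US^2U^*=AA^*=A^*A=S^2$, so $U$ commutes with $S^2$, and since the positive-semidefinite square root $S$ is a polynomial in $S^2$ (interpolate $t\mapsto\sqrt{t}$ on the finitely many eigenvalues of $S^2$), $U$ commutes with $S$. This proves (iii)$\Rightarrow$(ii), and in fact the stronger statement that \emph{every} polar decomposition of such a matrix has commuting factors --- something the paper never needs, since it simply constructs one commuting decomposition by writing the unitarily diagonalised $A=WDW^*$ as a product of simultaneously diagonalised factors and then gets (ii)$\Rightarrow$(iii) and (iii)$\Rightarrow$(i) by short algebra. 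The same repair removes your unresolved real-case hedge in (i)$\Rightarrow$(iii): real orthogonal diagonalisation of a real normal matrix is not available (rotations), but the argument above only uses the existence of a polar decomposition over $\mathbb{F}$, and (iii) then follows from (ii) by the paper's computation $A=US=U^2(U^*S)=U^2(SU)^*=U^2(US)^*=U^2A^*$.
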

\begin{proof}
\begin{description}
	\item[\textquotedblleft (i) $\Rightarrow$ (ii)\textquotedblright.] Let $A$ be normal. Then there exists a unitary $W\in\mathbb{F}^{n,n}$ such that
	\[
	W^*AW = \diag(d_1,\dots, d_k, 0\dots, 0) =: D
	\]
	is diagonal with $d_i\neq 0$, $i = 1,\dots,k$ and $1\leq k\leq n$. Hence, $d_j = r_je^{i\varphi_j}$, $j=1,\dots, k$, with $r_1,\dots, r_k > 0$ and $\varphi_1,\dots,\varphi_k\in(-\pi,\pi]$. Thus, we obtain a polar decomposition
	\[
	D = \underbrace{\diag(e^{i\varphi_1}, \dots, e^{i\varphi_k}, 1,\dots, 1)}_{=:U'}\cdot\underbrace{\diag(r_1,\dots, r_k,0,\dots, 0)}_{=:S'}
	\]
	of $D$ which yields a polar decomposition
	\[
	A = WDW^* = \underbrace{(WU'W^*)}_{=:U}\cdot\underbrace{(WS'W^*)}_{=:S}
	\]
	of $A$. Since $U$ and $S$ are, by definition, simultaneously diagonalisable, they commute.
	
	\item[\textquotedblleft (ii) $\Rightarrow$ (iii)\textquotedblright.] Let $A = US$ be a polar decomposition of $A$ with commuting factors. This gives
	\[
	A = US = U^2\cdot (U^*S^*) = U^2\cdot (SU)^* = U^2\cdot (US)^* = U^2\cdot A^*.
	\]
	Thus, (ii) follows with $V := U^2$.
	\item[\textquotedblleft (iii) $\Rightarrow$ (i)\textquotedblright.] Let $A = VA^*$ for some unitary matrix $V$. Then
	\[
	AA^* = (AV^*)(VA^*) = (VA^*)^*A = A^*A.
	\]
	It follows that $A$ is normal.\qedhere
\end{description}
\end{proof}

\subsection{Fundamental concepts of Indefinite Linear Algebra}

In this section, fundamental elements of Indefinite Linear Algebra are introduced. For this, we follow partially \cite[Chapter 2.2]{glr}.

\begin{defin}[Indefinite inner product space]
A bilinear (resp.~sesquilinear) form $[\cdot,\cdot]:\mathbb{F}^{n}\times\mathbb{F}^n \rightarrow\mathbb{F}$ is called an \emph{indefinite inner product} on $\mathbb{F}^n$ if it is symmetric (resp.~Hermitian) and nondegenerate. In this case, we call $(\mathbb{F}^n, [\cdot,\cdot])$ an \emph{indefinite inner product space}.
\end{defin}

We can identify an indefinite inner product with an uniquely determined nonsingular selfadjoint matrix $H\in\mathbb{F}^{n,n}$ by the relation
\[
[x,y] = \langle Hx, y\rangle.
\]
We will thus use these two notations interchangeably. 

Throughout the section we will assume $H$ to be a nonsingular and selfadjoint matrix inducing the indefinite inner product $[\cdot,\cdot]$.

\begin{defin}
Let $W\subseteq\mathbb{F}^n$ be a subspace of $\mathbb{F}^n$.
\begin{enumerate}
	\item We say that $W$ is \emph{$H$-degenerate} if there exists $w\in W\setminus\llbrace 0\rrbrace$ such that
	\[
	\forall v\in W,\quad [v,w] = 0,
	\]
	i.e. if the bilinear (resp. sequilinear) form $[\cdot,\cdot]\vert_W$ is degenerate. Otherwise, we say that $W$ is \emph{$H$-nondegenerate}.
	
	\item Let $x\in\mathbb{F}^n$. We say that $x$ is \emph{$H$-positive} resp.~\emph{$H$-negative} resp.~\emph{$H$-neutral} if ${[x,x] > 0}$ resp.~$[x,x] < 0$ resp.~$[x,x] = 0$.
	
	\item We say that $W$ is \emph{$H$-positive} if every vector $x\in W$ is $H$-positive. If there exists no other $H$-positive subspace $W\subsetneq V \subseteq \mathbb{F}^n$, we say that $W$ is a \emph{maximal $H$-positive subspace}. We define (maximal) $H$-negative and $H$-neutral subspaces in an analogous manner.
	
	\item A system $(e_1, \dots, e_k)\in\left(\mathbb{F}^n\right)^k$ is called \emph{$H$-unitary} if $[e_i,e_i]\in\llbrace -1,1\rrbrace$ and $[e_i,e_j] = 0$ for $i,j=1,\dots,k$, $i\neq j$.
	
	\item We call
	\[
	W^{[\perp]} := \llbrace v\in\mathbb{F}^n\;\vert\; \forall w\in W,\; [v,w] = 0\rrbrace
	\]
	the \emph{$H$-unitary complement} of $W$.
\end{enumerate}
\end{defin}

The following results show that indefinite inner products behave in some respect {analogously} to the Euclidean inner product.

\begin{theo}\label{theo:h_unit_basis_exist}
There exists an $H$-unitary basis of $\mathbb{F}^n$.
\end{theo}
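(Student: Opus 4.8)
The plan is to prove the existence of an $H$-unitary basis by induction on $n$, mimicking the Gram–Schmidt procedure but with care taken because $[\cdot,\cdot]$ is indefinite. The base case $n=1$ requires finding a single vector $e_1$ with $[e_1,e_1]\in\{-1,1\}$; this reduces to showing that not every vector is $H$-neutral, which follows from nondegeneracy of $H$ (if $[x,x]=0$ for all $x$, then by polarisation $[x,y]=0$ for all $x,y$, contradicting nonsingularity of $H$). For the inductive step, I would first use the same polarisation argument to produce some $v$ with $[v,v]\neq 0$, rescale it by $1/\sqrt{|[v,v]|}$ (over $\mathbb{C}$, or noting $[v,v]$ is real since $H$ is selfadjoint) to obtain $e_1$ with $[e_1,e_1]=\pm 1$, and then pass to the $H$-unitary complement $\spn\{e_1\}^{[\perp]}$.

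The key step is then to verify two things about $W := \spn\{e_1\}^{[\perp]}$: that $\mathbb{F}^n = \spn\{e_1\}\oplus W$, and that $[\cdot,\cdot]\vert_W$ is again nondegenerate, so the inductive hypothesis applies to $(W, [\cdot,\cdot]\vert_W)$ (after identifying $W$ with $\mathbb{F}^{n-1}$ via a basis). The direct sum decomposition follows from the standard projection formula: for any $x$, write $x = [e_1,e_1]\,[e_1,x]\,e_1 + \bigl(x - [e_1,e_1]\,[e_1,x]\,e_1\bigr)$ and check the second summand lies in $W$; the sum is direct because $e_1\notin W$ (as $[e_1,e_1]\neq 0$). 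Nondegeneracy of the restriction follows because if $w\in W$ satisfied $[w, w']=0$ for all $w'\in W$, then combined with $[w,e_1]=0$ and the direct sum decomposition we would get $[w,x]=0$ for all $x\in\mathbb{F}^n$, so $w=0$ by nonsingularity of $H$. Applying the inductive hypothesis to $W$ yields $e_2,\dots,e_n$, and orthogonality $[e_1,e_j]=0$ for $j\geq 2$ is automatic since $e_j\in W$.

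The main obstacle — and the only place indefiniteness genuinely intervenes — is the very first move: unlike in the Euclidean case one cannot simply normalise an arbitrary nonzero vector, because it might be $H$-neutral. So the crux is the polarisation argument guaranteeing the existence of a non-neutral vector, and keeping track of the sign $[e_1,e_1]=\pm 1$ rather than insisting on $+1$. Everything else is the routine Gram–Schmidt bookkeeping, so I would state the polarisation lemma carefully (distinguishing the bilinear real case, where $2[x,y]=[x+y,x+y]-[x,x]-[y,y]$, from the sesquilinear complex case, where one also needs the identity involving $[x+iy,x+iy]$) and then let the induction run.
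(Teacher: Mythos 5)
Your proof is correct, but it takes a genuinely different route from the paper. The paper invokes the spectral theorem: since $H$ is selfadjoint it has a Euclidean-orthonormal eigenbasis with real nonzero eigenvalues, and rescaling each eigenvector by $1/\sqrt{\vert\lambda_i\vert}$ immediately produces an $H$-unitary basis. You instead run an induction: a polarisation argument (correctly split into the real bilinear identity and the complex sesquilinear one using $[x+iy,x+iy]$) produces a non-neutral vector, which you normalise to $e_1$ with $[e_1,e_1]=\pm1$; the projection formula gives $\mathbb{F}^n=\spn\llbrace e_1\rrbrace\oplus\spn\llbrace e_1\rrbrace^{[\perp]}$, the restriction of the form to the complement is again nondegenerate, and induction finishes the job. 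Your bookkeeping is sound, including the point that $[v,v]$ is real so the rescaling makes sense, and that directness of the sum follows from $[e_1,e_1]\neq 0$. What each approach buys: the paper's argument is shorter granted the spectral theorem and makes the count of $+1$'s and $-1$'s visibly equal to the signature of $H$, which it quietly relies on later alongside Sylvester's law of inertia; your argument is more elementary and coordinate-free (it only uses nondegeneracy of the form, not eigenvalues of $H$), and it is essentially the same machinery the paper deploys anyway in the proof of the $H$-unitary basis extension Theorem \ref{theo:basis_ext} (nondegenerate complement, existence of a non-neutral vector), so your route would let both results be proved uniformly. One caveat if you formalise it: in the complex case the naive identity $[x,y]=\frac{1}{2}([x+y,x+y]-[x,x]-[y,y])$ only recovers the Hermitian-symmetric part, so the second polarisation identity you mention is genuinely needed --- you flagged this correctly.
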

\begin{proof}
Since $H$ is selfadjoint, all its eigenvalues are real and there exists a unitary basis $\mathcal{B} := (e_1',\dots,e_n')$ of $\mathbb{F}^n$ consisting of eigenvectors of $H$. Let $(p^+, p^-, 0)$ denote the signature of $H$ and assume without loss of generality that the eigenvalues $(\lambda_1,\dots,\lambda_n)\in\mathbb{R}^n$ corresponding to $\mathcal{B}$ are sorted in descending order. This implies $\lambda_i > 0$ for $i=1,\dots,p^+$ and $\lambda_i < 0$ for $i=p^++1,\dots, n$. Now define a new basis $(e_1,\dots, e_n)$ by
\[
e_i := \dfrac{1}{\sqrt{\vert \lambda_i\vert}}\cdot e_i'\;,
\]
giving
\[
[e_i, e_i] = \dfrac{1}{\vert \lambda_i\vert}\cdot e_i'^*He_i' = \dfrac{\lambda_i}{\vert\lambda_i\vert} \underbrace{\langle e_i', e_i'\rangle}_{=1} = \llbrace\begin{array}{ll}
1 & \text{, for } 1\leq i\leq p^+\\
-1&\text{, for } p^+ < i\leq n
\end{array}\right.
\]
and analogously $[e_i, e_j] = 0$ for $i\neq j$. We conclude that $(e_1,\dots, e_n)$ is an $H$-unitary basis of $\mathbb{F}^n$.
\end{proof}

\begin{lem}\label{lem:unit_unabh}
Every $H$-unitary system is linearly independent.
\end{lem}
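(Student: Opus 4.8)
The plan is to test a hypothetical linear dependence relation against the system itself by pairing it with each basis vector. Suppose $(e_1,\dots,e_k)$ is an $H$-unitary system and that $\sum_{i=1}^k \alpha_i e_i = 0$ for some scalars $\alpha_1,\dots,\alpha_k\in\mathbb{F}$. First I would fix an index $j\in\{1,\dots,k\}$ and apply $[e_j,\cdot]$ to the relation; since $[\cdot,\cdot]$ is linear in its second argument (recall $[x,y]=\langle Hx,y\rangle = x^*Hy$), this yields
\[
0 = \Bigl[e_j,\sum_{i=1}^k \alpha_i e_i\Bigr] = \sum_{i=1}^k \alpha_i\,[e_j,e_i].
\]
Then I would invoke the two defining properties of an $H$-unitary system: the off-diagonal products $[e_j,e_i]$ with $i\neq j$ all vanish, so the sum collapses to $\alpha_j\,[e_j,e_j]$, and the diagonal product satisfies $[e_j,e_j]\in\{-1,1\}$, hence is nonzero. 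Dividing by $[e_j,e_j]$ forces $\alpha_j = 0$, and since $j$ was arbitrary, every coefficient vanishes, which is exactly linear independence.

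There is no genuine obstacle here: the argument is the indefinite-inner-product analogue of the classical fact that an orthonormal set is linearly independent, and the only ingredient that must be used is that each $e_j$ has nonzero self-product, which the definition guarantees (unlike for a general $H$-neutral vector). The reasoning is identical in the real (bilinear) and complex (sesquilinear) cases, so no case distinction is needed.
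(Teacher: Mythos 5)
Your proof is correct and is essentially the paper's own argument: pair the dependence relation $\sum_i \alpha_i e_i = 0$ with each $e_j$, use $[e_i,e_j]=0$ for $i\neq j$ and $[e_j,e_j]\in\llbrace -1,1\rrbrace$ to conclude $\alpha_j=0$. The only cosmetic difference is that the paper places the sum in the first (conjugate-linear) slot, producing conjugated coefficients, while you use the second (linear) slot; both are valid under the paper's convention.
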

\begin{proof}
Let $(e_1,\dots,e_k)$ be an $H$-unitary system. Furthermore let $\lambda_1,\dots,\lambda_k\in\mathbb{F}$ be such that $\sum_{i=1}^ k \lambda_ie_i = 0$. This implies for $j=1,\dots,k$ in particular
\[
0 = \left[\sum_{i=1}^ k \lambda_ie_i, e_j\right] = \sum_{i=1}^ k \lconj{\lambda_i} [e_i, e_j] = \lconj{\lambda_j}[e_j,e_j]\in \llbrace -\lconj{\lambda_j}, \lconj{\lambda_j}\rrbrace,
\]
i.e. $\lambda_j = 0$ for $j=1,\dots,k$.
\end{proof}
\begin{lem}
Let $W\subseteq\mathbb{F}^ n$ be a subspace of $\mathbb{F}^ n$. Then the following statements hold.
\begin{enumerate}
	\item If $W\neq \llbrace 0\rrbrace$ is neutral, then $W$ is degenerate.
	\item $W^ {[\perp]}$ is a subspace of $\mathbb{F}^ n$.
	\item $W$ is degenerate \iff $W^ {[\perp]}$ is degenerate.
	\item It is
	\[
	W^ {[\perp]} = H^ {-1}W^ {\perp}.
	\]
	\item If $W$ is nondegenerate, it holds
	\[
	\mathbb{F}^n = W \oplus W^{[\perp]}.
	\]
\end{enumerate}
\end{lem}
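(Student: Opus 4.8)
The plan is to prove part (4) first, since the concrete description $W^{[\perp]}=H^{-1}W^{\perp}$ does almost all of the work, and then to derive parts (2), (3) and (5) from it, handling (1) by a separate polarization argument.

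First I would establish (4). By definition, $v\in W^{[\perp]}$ means $[v,w]=\langle Hv,w\rangle=0$ for every $w\in W$, which is precisely the statement $Hv\in W^{\perp}$; since $H$ is nonsingular this is equivalent to $v\in H^{-1}W^{\perp}$. Part (2) is then immediate: $W^{\perp}$ is a subspace and $v\mapsto H^{-1}v$ is linear, so $W^{[\perp]}$ is a subspace. A byproduct that I would record for later use is the dimension count $\dim W^{[\perp]}=\dim W^{\perp}=n-\dim W$.

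Next I would isolate the observation that drives (3) and (5): directly from the definitions, and using the symmetry (resp.\ Hermitian symmetry) of $[\cdot,\cdot]$ to pass between $[v,w]=0$ and $[w,v]=0$, a subspace $W$ is $H$-degenerate if and only if $W\cap W^{[\perp]}\neq\{0\}$. For (3) I then need the double-complement identity $W^{[\perp][\perp]}=W$: the inclusion $W\subseteq W^{[\perp][\perp]}$ holds by the definition of the $H$-unitary complement (again using symmetry of the form), and applying the dimension count twice gives $\dim W^{[\perp][\perp]}=n-\dim W^{[\perp]}=\dim W$, hence equality. Therefore $W$ is degenerate \iff $W\cap W^{[\perp]}\neq\{0\}$ \iff $W^{[\perp]}\cap W^{[\perp][\perp]}\neq\{0\}$ \iff $W^{[\perp]}$ is degenerate, which is (3). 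For (5), if $W$ is nondegenerate then the observation gives $W\cap W^{[\perp]}=\{0\}$, and together with $\dim W+\dim W^{[\perp]}=n$ this yields the direct sum $\mathbb{F}^{n}=W\oplus W^{[\perp]}$.

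Finally, for (1) I would use a polarization identity. If $W$ is neutral then $[x,x]=0$ for all $x\in W$; in the symmetric case $[x,y]=\tfrac12\bigl([x+y,x+y]-[x,x]-[y,y]\bigr)$, and in the Hermitian case the analogous four-term identity, both show that $[\cdot,\cdot]$ vanishes identically on $W$, so any $w\in W\setminus\{0\}$ witnesses that $W$ is degenerate. The only delicate points are keeping track of conjugate-linearity in the first argument when checking $W\subseteq W^{[\perp][\perp]}$ and selecting the correct polarization identity in the Hermitian case; neither is a genuine obstacle, so the real content of the proof is the single identity $W^{[\perp]}=H^{-1}W^{\perp}$ together with the resulting dimension bookkeeping.
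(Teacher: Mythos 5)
Your argument is correct, and most of its ingredients coincide with the paper's: polarization for (1), the computation $[v,w]=\langle Hv,w\rangle$ behind (4), and trivial intersection plus the dimension count $\dim W^{[\perp]}=\dim W^{\perp}=n-\dim W$ for (5). Where you genuinely diverge is in the treatment of (2) and (3). The paper proves (2) directly from sesquilinearity and (3) by transferring a degeneracy witness: a nonzero $w\in W$ orthogonal to all of $W$ lies in $W\cap W^{[\perp]}$ and therefore also witnesses degeneracy of $W^{[\perp]}$, with the converse dismissed as ``analogous''. You instead prove (4) first, read off (2) from $W^{[\perp]}=H^{-1}W^{\perp}$, and obtain (3) from the biduality $W^{[\perp][\perp]}=W$ (inclusion plus dimension count) combined with the reformulation that $W$ is degenerate precisely when $W\cap W^{[\perp]}\neq\{0\}$. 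This is arguably better bookkeeping: the paper's converse is not literally analogous, since a degeneracy witness of $W^{[\perp]}$ lands in $W^{[\perp]}\cap W^{[\perp][\perp]}$, and one needs exactly your identity $W^{[\perp][\perp]}=W$ (or a contrapositive argument) to pull it back into $W$; your route makes that step explicit. You are also slightly more careful on (1): the two-term polarization identity quoted in the paper recovers only $\Re[x,y]$ for a Hermitian sesquilinear form, so in the complex case one indeed needs the four-term identity or the substitution $y\mapsto iy$, as you indicate.
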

\begin{proof}
\begin{enumerate}
	\item Suppose that $W$ is neutral, i.e. $[x,x] = 0$ for every $x\in W$. This implies
	\[
	[x,y] = \dfrac{1}{2}\cdot ([x+y,x+y] - [x,x] - [y,y]) = 0
	\]
	for all $x,y\in W$. We conclude that $W$ is degenerate.
	\item The assertion follows directly from the sesquilinearity of the indefinite inner product.
	\item Let $W$ be degenerate. Then there exists a $w\in W\setminus\llbrace 0\rrbrace$ such that
	\[
	\forall v\in W,\quad [v,w] = 0.
	\]
	This implies $w\in W^{[\perp]}$. But since
	\[
	\forall v'\in W^{[\perp]},\quad [v', w] = 0
	\]
	by definition of $W^{[\perp]}$, this gives that $W^{[\perp]}$ is degenerate. The other implication follows analogously.
	
	\item Let $w\in W^{\perp}$. Then
	\[
	[v, H^{-1}w] = v^*HH^{-1}w = v^*w = \langle v,w\rangle = 0
	\]
	for all $v\in W$. It follows that $H^{-1}w\in W^{[\perp]}$ and thus $H^{-1}W^\perp\subseteq W^{[\perp]}$. Now let $w\in W^{[\perp]}$ and $z = Hw$. Then
	\[
	\langle v,z\rangle = v^*z = v^*Hw = [v, w] = 0
	\]
	for all $v\in W$ and thus $z\in W^\perp$ which implies $W^{[\perp]}\subseteq H^{-1}W^\perp$.
	
	\item From the fact that $W$ (and thus $W^{[\perp]}$) are nondegenerate follows $W\cap W^{[\perp]} = \llbrace 0\rrbrace$. From 4. it follows that $\dim W^{[\perp]} = \dim W^\perp$ and thus
	\[
	\dim W + \dim W^{[\perp]} = \dim W + \dim W^\perp = n.\qedhere
	\]
\end{enumerate}
\end{proof}

\begin{theo}[$H$-unitary basis extension]\label{theo:basis_ext}
If $(e_1,\dots, e_k)$ is an $H$-unitary system, there exist vectors $e_{k+1},\dots, e_n\in\mathbb{F}^n$ such that $(e_1,\dots, e_n)$ is an $H$-unitary basis.
\end{theo}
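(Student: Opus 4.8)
The plan is to realise the given system as an $H$-unitary basis of a nondegenerate subspace and to complement it by the orthogonal splitting of $\mathbb{F}^n$. First I would set $W := \spn(e_1,\dots,e_k)$; by Lemma~\ref{lem:unit_unabh} the system $(e_1,\dots,e_k)$ is linearly independent, so $\dim W = k$. Next I would observe that $W$ is $H$-nondegenerate: with respect to the basis $(e_1,\dots,e_k)$ the Gram matrix of $[\cdot,\cdot]\vert_W$ equals $\diag([e_1,e_1],\dots,[e_k,e_k])$, a diagonal matrix with entries in $\llbrace -1,1\rrbrace$, hence invertible. Then the decomposition lemma above yields $\mathbb{F}^n = W \oplus W^{[\perp]}$, and the part of that lemma on degeneracy of $H$-unitary complements shows that $W^{[\perp]}$ is an $(n-k)$-dimensional $H$-nondegenerate subspace.

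The remaining task is to produce an $H$-unitary basis $(e_{k+1},\dots,e_n)$ of $W^{[\perp]}$; once this is done, $(e_1,\dots,e_n)$ is $H$-unitary, because the cross terms $[e_i,e_j]$ with $i\leq k < j$ vanish by the very definition of $W^{[\perp]}$, and it is a basis of $\mathbb{F}^n$ since it consists of $n$ linearly independent vectors (again by Lemma~\ref{lem:unit_unabh}, or because the sum $W\oplus W^{[\perp]}$ is direct). To obtain the basis of $W^{[\perp]}$ I would essentially repeat the argument of Theorem~\ref{theo:h_unit_basis_exist} inside $W^{[\perp]}$: equip $W^{[\perp]}$ with the restriction of the Euclidean product and fix an orthonormal basis of it, so that $[\cdot,\cdot]\vert_{W^{[\perp]}}$ is represented by a selfadjoint matrix $H'\in\mathbb{F}^{n-k,n-k}$, which is nonsingular because $W^{[\perp]}$ is $H$-nondegenerate; diagonalising $H'$ by a unitary change of basis and rescaling each eigenvector by $1/\sqrt{\vert\lambda_i\vert}$ exactly as in the proof of Theorem~\ref{theo:h_unit_basis_exist} yields vectors $e_{k+1},\dots,e_n$ with $[e_i,e_i]\in\llbrace -1,1\rrbrace$ and $[e_i,e_j] = 0$ for $i\neq j$.

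The main obstacle is that Theorem~\ref{theo:h_unit_basis_exist} is stated only for the full space $(\mathbb{F}^n, H)$ and so cannot be quoted verbatim for the subspace $W^{[\perp]}$. One must therefore either re-run its spectral-theorem argument in $W^{[\perp]}$ with the inherited Euclidean structure --- the key point being that the restricted form is still symmetric (resp.~Hermitian) and nondegenerate, hence corresponds to a nonsingular selfadjoint matrix once a Euclidean-orthonormal basis of $W^{[\perp]}$ is chosen --- or else isolate a small lemma stating that every $H$-nondegenerate subspace carries an $H$-unitary basis, and apply it. I would prefer the first, more self-contained route; once this transfer is in place, the rest is the bookkeeping sketched above.
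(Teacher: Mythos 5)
Your proof is correct, but it takes a genuinely different route from the paper's. The paper reduces the theorem to a one-step extension: it shows (as you do) that $W=\spn(e_1,\dots,e_k)$ is nondegenerate, concludes via the lemma on complements that $W^{[\perp]}$ is nondegenerate and hence not neutral, so it contains a single vector $x$ with $[x,x]\neq 0$, normalises that one vector to obtain $e_{k+1}$, and finishes by induction on $k$. You instead split $\mathbb{F}^n = W\oplus W^{[\perp]}$ once and manufacture the entire complementary $H$-unitary basis in one stroke by re-running the diagonalisation argument of Theorem \ref{theo:h_unit_basis_exist} on the restricted form; you correctly identify the only real gap in quoting that theorem verbatim (it is stated for $(\mathbb{F}^n,H)$ only) and supply the needed transfer: the restriction of $[\cdot,\cdot]$ to $W^{[\perp]}$ is Hermitian and nondegenerate, hence represented by a nonsingular selfadjoint matrix $H'$ in a chosen basis, which can be unitarily diagonalised and rescaled. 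Your subsidiary steps are also sound: nondegeneracy of $W$ via the invertible diagonal $\pm 1$ Gram matrix (the paper argues the same fact vector-wise), and the vanishing of the cross terms $[e_i,e_j]$ for $i\le k<j$ by definition of $W^{[\perp]}$, with the dimension count giving a basis. The trade-off is that your argument invokes the spectral theorem a second time and needs the subspace version of Theorem \ref{theo:h_unit_basis_exist} as an extra (easy) lemma, but it delivers all of $e_{k+1},\dots,e_n$ at once and makes the signature of the complement visible; the paper's induction avoids any restriction argument and needs only the existence of one non-neutral vector in a nondegenerate subspace at each step.
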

\begin{proof}
It suffices to show that one can add a vector to an $H$-unitary system consisting of $k < n$ vectors, since it follows from Lemma \ref{lem:unit_unabh}, that the system is linearly independent. The assertion then follows by an inductive argument. 

Consider the subspace
\[
W := \spn(e_1,\dots,e_k)
\]
and let $v := \sum_{i=1}^ k \lambda_ie_i\in W\setminus\llbrace 0\rrbrace$, assuming without loss of generality that $\lambda_1\neq 0$. This gives $[v, e_1] = \lconj{\lambda_1}[e_1,e_1]\neq 0$ and we conclude that $W$ is nondegenerate.

Since $W$ is nondegenerate, $W^{[\perp]}$ is nondegenerate and thus cannot be neutral. Hence, there exists a vector $x\in W^{[\perp]}$ such that $[x,x] \neq 0$. Define
\[
e_{k+1} := \dfrac{1}{\sqrt{\vert [x,x]\vert}}\cdot x,
\]
giving $[e_{k+1},e_{k+1}]\in\llbrace -1,1\rrbrace$. Furthermore, $(e_1,\dots, e_{k+1})$ is an $H$-unitary system.
\end{proof}

In order to discuss structured decomposition as polar decompositions, a generalisation of the concept of selfadjoint, unitary and normal matrices is necessary. Furthermore, we need to analyse which properties can be translated into our context of indefinite inner product spaces.

\begin{defin}
Let $A\in\mathbb{F}^{n,n}$.
\begin{enumerate}
	\item The uniquely determined matrix $A^{[*]} := A^{[*]_H} := H^{-1}A^*H$ such that
	\[
	[Ax, y] = [x, A^{[*]}y]
	\]
	for all $x,y\in\mathbb{F}^{n}$ is called the \emph{$H$-adjoint matrix} of $A$.
	
	\item We say that $A$ is \emph{$H$-selfadjoint}, if $A^{[*]} = A$.
	
	\item We say that $A$ is \emph{$H$-unitary}, if $A^{[*]}A = I_n$.
	
	\item We say that $A$ is \emph{$H$-normal}, if $A^{[*]}A  = AA^{[*]}$, i.e. if $A$ commutes with its $H$-adjoint.
\end{enumerate}
\end{defin}

\begin{rem}
\normalfont
\begin{enumerate}
	\item The existence and uniqueness of the $H$-adjoint can be proved as follows. First, we note that for all $x,y\in\mathbb{F}^n$
	\[
	[Ax, y] = \langle HAx, y\rangle = \langle HAH^{-1}\cdot Hx, y\rangle = \langle Hx, H^{-1}A^*Hy\rangle = [x, H^{-1}A^*Hy].
	\]
	Furthermore, suppose that $B$ and $B'$ are two $H$-adjoint matrices of $A$. This implies
	\[
	[x, (B-B')y] = [Ax, y] - [Ax, y] = 0
	\]
	for all $x,y\in\mathbb{F}^n$. Since the indefinite inner product is nondegenerate, it follows that $B = B'$.
	
	\item As in the Euclidean case, a matrix $A\in\mathbb{F}^{n,n}$ is $H$-unitary \iff it preserves the inner product, i.e.
	\[
	[Ax, Ay] = [x,y]
	\]	
	for all $x,y\in\mathbb{F}^n$.
	
	\item The above definition can be reformulated as
	\begin{itemize}
		\item $A$ is $H$-selfadjoint $\Leftrightarrow$ $HA = A^*H$.
		\item $A$ is $H$-unitary $\Leftrightarrow$ $H = A^*HA$.
		\item $A$ is $H$-normal $\Leftrightarrow$ $AH^{-1}A^*H = H^{-1}A^*HA$.
	\end{itemize}
	
	\item One can easily verify that
	\[
	(AB)^{[*]} = B^{[*]}A^{[*]}\quad\text{ and }\quad \left(A^{-1}\right)^{[*]} = \left(A^{[*]}\right)^{-1} =: A^{-[*]}.
	\]
	
	\item As in the definite case, every $H$-selfadjoint or $H$-unitary matrix is $H$-normal. However, we lose the characterisation from Theorem \ref{theo:unit_diag}: There are $H$-normal matrices that are not diagonalisable. More precisely, already $H$-selfadjoint are not necessarily diagonalisable (cf. Example \ref{ex:h-normal_diag}).
	
	\item Other properties are lost partially, e.g. if $A$ is $H$-selfadjoint, then it can have complex eigenvalues, but they are symmetric with respect to the real axis. This means that if $\lambda\in\mathbb{C}$ is an eigenvalue of $A$, then $\lconj{\lambda}$ is also an eigenvalue of $A$. Moreover, the algebraic multiplicity of $\lambda$ and $\lconj{\lambda}$ coincide (cf. Example \ref{ex:h-normal_diag}). 
\end{enumerate}
\end{rem}

\begin{ex}\normalfont\label{ex:h-normal_diag}
Consider the nonsingular and selfadjoint matrix
\[
H = \begin{bmatrix}
0 & 0 & 1\\
0 & 1 & 0\\
1 & 0 & 0
\end{bmatrix}.
\]
In this case, the $H$-adjoint of a matrix $A$ is the \textquotedblleft reflection of $\lconj{A}$ over its antidiagonal\textquotedblright. Thus, the matrices
\[
X_1 = \begin{bmatrix}
0 & 1 & 0\\
0 & 0 & 1\\
0 & 0 & 0
\end{bmatrix}\quad\text{ and }\quad X_2 = \begin{bmatrix}
i & 0 & 0\\
0 & 0 & 0\\
0 & 0 & -i
\end{bmatrix}
\]
are both $H$-selfadjoint. Furthermore, $X_1$ is $H$-normal but not diagonalisable and $X_2$ has complex eigenvalues. Nevertheless, the spectrum of $X_2$ is symmetric with respect to the real line.
\end{ex}
\begin{ex}\normalfont\label{ex:h-unit_base}
With the matrix $H$ from the example above,
\[
X_3 = \begin{bmatrix}
2 & 0 & 0\\
0 & 1 & 0\\
0 & 0 & 1/2
\end{bmatrix}
\]
is $H$-unitary. Since $$\left[\begin{bmatrix}
2\\0\\0
\end{bmatrix},\begin{bmatrix}
2\\0\\0
\end{bmatrix}\right] = 0,$$ we cannot conclude that the columns of an $H$-unitary matrix form an $H$-unitary basis.
\end{ex}

The Example \ref{ex:h-unit_base} shows that we cannot characterise $H$-unitary matrices directly by an $H$-unitary basis. We thus need a new characterisation of $H$-unitary matrices which will be provided by the next two theorems.

\begin{theo}
A matrix $U\in\mathbb{F}^{n,n}$ is $H$-unitary \iff there exist two $H$-unitary bases $(e_1,\dots, e_n)$ and $(e_1',\dots, e_n')$ such that
\[
Ue_i = e_i'\quad\text{ and }\quad [e_i, e_i] = [e_i', e_i']
\]
for $i=1,\dots, n$.
\end{theo}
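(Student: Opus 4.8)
The plan is to prove both implications by reducing everything to the characterisation of $H$-unitarity as preservation of the inner product, recorded in the second item of the Remark preceding Example \ref{ex:h-normal_diag}. No deep idea is needed; the statement is essentially a bookkeeping exercise in expanding vectors in an $H$-unitary basis.

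For the direction ``$\Leftarrow$'', suppose the two $H$-unitary bases $(e_1,\dots,e_n)$ and $(e_1',\dots,e_n')$ with $Ue_i=e_i'$ and $[e_i,e_i]=[e_i',e_i']$ are given. I would expand arbitrary $x,y\in\mathbb{F}^n$ as $x=\sum_i a_ie_i$ and $y=\sum_j b_je_j$, so that $Ux=\sum_i a_ie_i'$ and $Uy=\sum_j b_je_j'$, and then compute
\[
[Ux,Uy]=\sum_{i,j}\lconj{a_i}b_j[e_i',e_j']=\sum_i\lconj{a_i}b_i[e_i',e_i']=\sum_i\lconj{a_i}b_i[e_i,e_i]=[x,y],
\]
where the second equality uses that $(e_1',\dots,e_n')$ is $H$-unitary (so the off-diagonal terms vanish), the third uses the hypothesis $[e_i',e_i']=[e_i,e_i]$, and the last undoes the same expansion for the pair $x,y$. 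Hence $U$ preserves $[\cdot,\cdot]$ and is therefore $H$-unitary.

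For ``$\Rightarrow$'', suppose $U$ is $H$-unitary. By Theorem \ref{theo:h_unit_basis_exist} fix any $H$-unitary basis $(e_1,\dots,e_n)$ of $\mathbb{F}^n$ and set $e_i':=Ue_i$. Since $U$ preserves the inner product, $[e_i',e_j']=[Ue_i,Ue_j]=[e_i,e_j]$ for all $i,j$; in particular $[e_i',e_i']=[e_i,e_i]\in\llbrace-1,1\rrbrace$ and $[e_i',e_j']=0$ for $i\neq j$, so $(e_1',\dots,e_n')$ is an $H$-unitary system. By Lemma \ref{lem:unit_unabh} it is linearly independent, and since it consists of $n$ vectors it is a basis of $\mathbb{F}^n$. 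By construction $Ue_i=e_i'$ and $[e_i,e_i]=[e_i',e_i']$, which is exactly the desired conclusion.

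I do not anticipate any real obstacle. The only two points that deserve a word of care are the appeal to Lemma \ref{lem:unit_unabh} to promote the $H$-unitary system $(e_1',\dots,e_n')$ to a basis in the forward direction, and correctly carrying the complex conjugation on the coefficients $a_i$ through the sesquilinear expansion in the backward direction; everything else is a routine computation.
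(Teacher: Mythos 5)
Your proposal is correct and follows essentially the same route as the paper: the backward direction is the same coefficient expansion using orthogonality in both bases and the hypothesis $[e_i,e_i]=[e_i',e_i']$, and the forward direction takes $e_i':=Ue_i$ for an arbitrary $H$-unitary basis, just as the paper does (your explicit appeal to Lemma \ref{lem:unit_unabh} only makes precise a step the paper leaves implicit).
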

\begin{proof} 
First, suppose $U$ to be $H$-unitary and let $(e_1,\dots, e_n)$ be an arbitrary $H$-unitary basis. Since $U$ preserves the indefinite inner product, it suffices to take $(Ue_1,\dots, Ue_n)$ as the second basis. For the other implication, consider a matrix $U$ such that
\[
Ue_i = e_i'\quad\text{ and }\quad [e_i, e_i] = [e_i', e_i']
\]
for two $H$-unitary bases $(e_1,\dots, e_n)$, $(e_1',\dots, e_n')$ and $i=1,\dots,n$. Now consider arbitrary $x = \sum_{i=1}^n \lambda_ie_i,y = \sum_{i=1}^n \mu_ie_i\in\mathbb{F}^n$. Then
\begin{align*}
[Ux, Uy] &= \sum_{i=1}^n\sum_{j=1}^n \lconj{\lambda_i}\mu_j[Ue_i, Ue_j]\\
&= \sum_{i=1}^n\sum_{j=1}^n \lconj{\lambda_i}\mu_j[e_i', e_j']\\
&= \sum_{i=1}^n \lconj{\lambda_i}\mu_i [e_i', e_i']\\
&= \sum_{i=1}^n \lconj{\lambda_i}\mu_i [e_i, e_i]\\
&= \sum_{i=1}^n\sum_{j=1}^n \lconj{\lambda_i}\mu_j [e_i, e_j] = [x,y].
\end{align*}
Hence, $U$ preserves the indefinite inner product.
\end{proof}

This theorem is incomplete in the sense that we cannot assume that there is an $H$-unitary matrix translating arbitrary $H$-unitary bases into one another: We still need to show that two $H$-unitary bases have the same number of $H$-positive and $H$-negative vectors. 

\begin{theo}[Sylvester's law of inertia, cf. \cite{sb}]
There exists an $H$-positive subspace $P\subseteq\mathbb{F}^n$ and an $H$-negative subspace $N\subseteq\mathbb{F}^n$ such that
\[
\mathbb{F}^n = P\oplus N.
\]
If $(p^+, p^-, 0)$ is the signature of $H$, it holds
\[
p^+ = \dim P\quad\text{ and }\quad p^- = \dim N
\]
for any such decomposition.
\end{theo}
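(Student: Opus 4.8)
The plan is to prove both the existence of the decomposition $\mathbb{F}^n = P \oplus N$ and the inertia formula by passing through an $H$-unitary basis, which we already know exists by Theorem \ref{theo:h_unit_basis_exist}. First I would take an $H$-unitary basis $(e_1,\dots,e_n)$ and reorder it so that $[e_i,e_i] = 1$ for $i = 1,\dots,k$ and $[e_i,e_i] = -1$ for $i = k+1,\dots,n$, for some $k$. Setting $P := \spn(e_1,\dots,e_k)$ and $N := \spn(e_{k+1},\dots,e_n)$, one checks directly that for a nonzero $x = \sum_{i=1}^k \lambda_i e_i \in P$ we have $[x,x] = \sum_{i=1}^k |\lambda_i|^2 > 0$ by $H$-orthonormality, so $P$ is $H$-positive, and symmetrically $N$ is $H$-negative; and $\mathbb{F}^n = P \oplus N$ since the $e_i$ form a basis (using Lemma \ref{lem:unit_unabh}). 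This gives the existence part with $\dim P = k$, $\dim N = n-k$.

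Next I would identify $k$ with $p^+$. The cleanest route is to build the matrix $T$ whose columns are $e_1,\dots,e_n$; then $T^*HT = \diag(1,\dots,1,-1,\dots,-1)$ with $k$ ones, because $(T^*HT)_{ij} = e_i^* H e_j = [e_j, e_i]$ (after taking the appropriate conjugate, which is harmless since the entries are real). Since $T$ is nonsingular, this is a congruence of $H$ to a diagonal sign matrix. By the classical Sylvester law of inertia over $\mathbb{R}$ or $\mathbb{C}$ (which is what the citation \cite{sb} supplies, and which is here being \emph{applied} rather than reproved), the number of positive entries equals the number of positive eigenvalues of $H$, i.e. $k = p^+$, and likewise $n - k = p^-$; note $H$ is nonsingular so there are no zero entries. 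Hence $\dim P = p^+$ and $\dim N = p^-$ for \emph{this} decomposition.

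Finally I would upgrade "this decomposition" to "any such decomposition". Suppose $\mathbb{F}^n = P' \oplus N'$ with $P'$ $H$-positive and $N'$ $H$-negative. Then $P' \cap N = \llbrace 0\rrbrace$: a nonzero vector in the intersection would be simultaneously $H$-positive and $H$-negative, which is impossible. Therefore $\dim P' + \dim N \le n$, giving $\dim P' \le n - \dim N = n - p^- = p^+$; symmetrically $\dim N' \le p^-$. Since $\dim P' + \dim N' = n = p^+ + p^-$, both inequalities are equalities, so $\dim P' = p^+$ and $\dim N' = p^-$.

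The main obstacle is really just bookkeeping: making sure the congruence identity $T^*HT = \diag(\pm 1)$ is set up with the conjugates on the correct side (the form is Hermitian/symmetric and the sign entries are real, so it works out, but this should be stated carefully), and being clear that we are \emph{invoking} the scalar Sylvester law from \cite{sb} rather than proving it from scratch. The dimension-counting argument in the last paragraph is the one genuinely new idea, and it is short; everything else is assembling pieces already available in the excerpt.
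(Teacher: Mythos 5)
You are proving a statement the thesis itself does not prove: it is quoted with a citation to \cite{sb} and no proof is given in the paper, so there is nothing to match your argument against line by line, and it has to stand on its own — which it does; it is correct. Existence via an $H$-unitary basis (Theorem \ref{theo:h_unit_basis_exist}) and the spans $P$, $N$ of its $H$-positive resp.\ $H$-negative vectors works; the congruence $T^*HT = \diag(1,\dots,1,-1,\dots,-1)$ together with the classical matrix form of the inertia theorem from \cite{sb} correctly identifies $k = p^+$ and $n-k = p^-$; and your intersection/dimension-count argument for an arbitrary decomposition $\mathbb{F}^n = P'\oplus N'$ is the standard and complete way to obtain the ``for any such decomposition'' clause. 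Two small observations. First, with the paper's convention $[x,y] = \langle Hx,y\rangle = x^*Hy$ one gets $(T^*HT)_{ij} = [e_i,e_j]$ directly, so the conjugation bookkeeping you flag is a non-issue. Second, and more substantively, you can drop the appeal to the classical congruence form of Sylvester's law altogether: the basis constructed in the proof of Theorem \ref{theo:h_unit_basis_exist} consists of rescaled eigenvectors of $H$, so by construction exactly $p^+$ of its vectors are $H$-positive and $p^-$ are $H$-negative; taking $P$ and $N$ to be their spans already yields one decomposition with $\dim P = p^+$ and $\dim N = p^-$, and your final dimension count then transfers these values to every decomposition. That variant makes the proof self-contained (the only external input being the spectral theorem already used for Theorem \ref{theo:h_unit_basis_exist}), whereas your version defers the inertia invariance to \cite{sb} — exactly as the thesis does — in exchange for being allowed to start from an arbitrary $H$-unitary basis.
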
 
\begin{theo}
Let $(p^+, p^-, 0)$ be the signature of $H$. Then every $H$-unitary basis consists of $p^+$ $H$-positive and $p^-$ $H$-negative vectors.
\end{theo}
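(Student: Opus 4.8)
The plan is to reduce the claim to Sylvester's law of inertia by manufacturing an explicit decomposition $\mathbb{F}^n = P \oplus N$ from the given $H$-unitary basis. Let $(e_1,\dots,e_n)$ be an $H$-unitary basis, split the index set $\{1,\dots,n\}$ into $I^+ := \{i : [e_i,e_i] = 1\}$ and $I^- := \{i : [e_i,e_i] = -1\}$, and write $q^+ := \vert I^+\vert$ and $q^- := \vert I^-\vert$, so that $q^+ + q^- = n$. Put $P := \spn\{e_i : i\in I^+\}$ and $N := \spn\{e_i : i\in I^-\}$. The goal is to show $q^+ = p^+$ and $q^- = p^-$.

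First I would verify that $P$ is $H$-positive. For a nonzero $x = \sum_{i\in I^+}\lambda_i e_i\in P$, the orthogonality relations $[e_i,e_j] = 0$ for $i\neq j$ give $[x,x] = \sum_{i\in I^+}\vert\lambda_i\vert^2[e_i,e_i] = \sum_{i\in I^+}\vert\lambda_i\vert^2 > 0$. The identical computation with $[e_i,e_i] = -1$ shows that $N$ is $H$-negative. Since $(e_1,\dots,e_n)$ is a basis of $\mathbb{F}^n$, we have $\mathbb{F}^n = P\oplus N$, and $\dim P = q^+$, $\dim N = q^-$ by linear independence.

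Then I would invoke Sylvester's law of inertia, which asserts precisely that for any decomposition of $\mathbb{F}^n$ into an $H$-positive and an $H$-negative subspace the dimensions are $p^+$ and $p^-$ respectively. Applying it to $\mathbb{F}^n = P\oplus N$ yields $q^+ = \dim P = p^+$ and $q^- = \dim N = p^-$, which is the assertion.

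I do not expect a genuine obstacle here: the only point requiring care is that the spans $P$ and $N$ are actually $H$-definite rather than merely free of neutral vectors, and this is exactly what the diagonal structure $[e_i,e_j] = \pm\delta_{ij}$ of an $H$-unitary basis delivers; the direct-sum decomposition is immediate from $(e_1,\dots,e_n)$ being a basis.
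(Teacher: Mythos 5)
Your proposal is correct and follows essentially the same route as the paper: build $P$ and $N$ as the spans of the $H$-positive and $H$-negative basis vectors, check $H$-definiteness via the diagonal Gram structure $[e_i,e_j]=\pm\delta_{ij}$, obtain $\mathbb{F}^n = P\oplus N$, and conclude by Sylvester's law of inertia. The only difference is cosmetic (index sets instead of reordering the basis).
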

\begin{proof}
Let $(e_1,\dots, e_n)$ be an $H$-unitary basis of $\mathbb{F}^n$ such that $e_1,\dots, e_k$ are $H$-positive and $e_{k+1},\dots, e_n$ are $H$-negative for some $1\leq k\leq n$. Consider an arbitrary $v = \sum_{i=1}^n \lambda_ie_i\in\mathbb{F}^n$. Then
\[
[v,v] = \sum_{i=1}^n\sum_{j=1}^n \lconj{\lambda_i}\lambda_j [e_i, e_j] = \sum_{i=1}^n \vert \lambda_i\vert^2 [e_i, e_i]
\]
and therefore it follows that 
\[
P:=\spn(e_1,\dots, e_k)\quad\text{ and }\quad N:=\spn(e_{k+1},\dots, e_n)
\]
are respectively an $H$-positive and an $H$-negative subspace. Since $P\cap N=\llbrace0\rrbrace$ and $k + (n-k) = n$ imply
\[
\mathbb{F}^n = P\oplus N,
\]
we conclude that $k = p^ +$.
\end{proof}

\subsection{Polar decompositions in indefinite inner product spaces}\label{ssec:pol_decomp_indef}

In this section, we will discuss one possible way of generalising the polar decomposition to indefinite inner product spaces. This thesis adopts the definition used by I. Gohberg, P. Lancaster and L. Rodman.

As in the previous section, we consider a nonsingular and selfadjoint matrix $H\in\mathbb{F}^{n,n}$ and the induced indefinite inner product $[\cdot,\cdot]$ on $\mathbb{F}^n$.

\begin{defin}[$H$-polar decomposition]
Let $X\in\mathbb{F}^{n,n}$. A decomposition
\[
X = UA
\]
into matrices $U,A\in\mathbb{F}^{n,n}$ is called \emph{$H$-polar decomposition} of $X$ if $U$ is $H$-unitary and $A$ is $H$-selfadjoint.
\end{defin}

This definition can be refined by adding more constraints to $A$ as it is done in Euclidean spaces. One problem is that positive-(semi)definiteness cannot be generalised in a unique way. D. S. Mackey, N. Mackey and F. Tisseur propose in \cite{mmt} to restrict the definition to matrices $A$ with eigenvalues $\Lambda(A) \subseteq \llbrace z\in\mathbb{C}\;\vert\; \Re(z) > 0\rrbrace$. This definition of \textquotedblleft computable\textquotedblright{} polar decompositions may facilitate the actual computation of such decompositions as it is explained in \cite[Section 6]{mmt}, but it omits singular matrices in which we are also interested in.

As in the Euclidean case, it is possible to define the \emph{left} $H$-polar decomposition. But as one can easily pass from one to the other, it suffices to study the \emph{right} $H$-polar decomposition that we defined above.\\

We will now discuss equivalent conditions for the existence of an $H$-polar decomposition. The following two theorems are preliminaries and as for the proofs, we will mainly follow \cite[Section 8]{br} and \cite[Section 2]{bmrrr}.
\begin{theo}[Witt]\label{theo:witt}
Let $V,W\subseteq\mathbb{F}^{n}$ be two subspaces of $\mathbb{F}^n$. If $U_0:V\rightarrow W$ is an isomorphism which preserves the indefinite inner product, i.e. which satisfies $${[U_0(x), U_0(y)] = [x,y]}$$ for all $x,y\in V$, there exists an $H$-unitary matrix $U\in\mathbb{F}^{n,n}$ such that
\[
\forall x\in V,\quad Ux = U_0(x).
\]
\end{theo}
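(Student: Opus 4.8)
The plan is to build the $H$-unitary extension of $U_0$ step by step, reducing to the case where $V$ is $H$-nondegenerate and then extending across the $H$-unitary complement. First I would deal with the possibly degenerate part of $V$. Write $V_0 := V \cap V^{[\perp]}$ for the radical of $[\cdot,\cdot]|_V$; since $U_0$ preserves the inner product, $U_0(V_0)$ is exactly the radical of $[\cdot,\cdot]|_W$, so $\dim U_0(V_0) = \dim V_0$. Choose a complement so that $V = V_0 \oplus V_1$ with $V_1$ an $H$-nondegenerate subspace, and likewise decompose $W$. The subtle point is that $U_0(V_1)$ need not equal the chosen complement of the radical in $W$; however, $U_0(V_1)$ is an $H$-nondegenerate subspace of $W$ of the same dimension, complementary to $U_0(V_0)$, and it suffices to handle the nondegenerate piece and then reattach the isotropic piece by a standard hyperbolic pairing argument. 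So the core of the proof is:

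\emph{Step 1 (nondegenerate case).} Assume $V$ is $H$-nondegenerate; then $W = U_0(V)$ is $H$-nondegenerate as well. By the lemma on $H$-unitary complements, $\mathbb{F}^n = V \oplus V^{[\perp]} = W \oplus W^{[\perp]}$, and both $V^{[\perp]}$ and $W^{[\perp]}$ are $H$-nondegenerate. Using Theorem \ref{theo:h_unit_basis_exist} applied to the restricted forms, pick $H$-unitary bases of $V^{[\perp]}$ and of $W^{[\perp]}$. The next observation is the inertia bookkeeping: since $U_0$ preserves $[\cdot,\cdot]$ on $V$, an $H$-unitary basis of $V$ maps to an $H$-unitary basis of $W$ with matching signs, so by Sylvester's law of inertia $V$ and $W$ have the same signature; as the signature of the whole space is fixed, $V^{[\perp]}$ and $W^{[\perp]}$ have equal signatures too, hence their $H$-unitary bases have the same number of $H$-positive and $H$-negative vectors. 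Define $U_1 : V^{[\perp]} \to W^{[\perp]}$ by matching these bases up respecting signs; then $U_1$ preserves the inner product on $V^{[\perp]}$. Finally set $U := U_0 \oplus U_1$ on $\mathbb{F}^n = V \oplus V^{[\perp]}$. One checks $[Ux, Uy] = [x,y]$ for all $x,y$: decompose $x = x' + x''$, $y = y' + y''$ with $x', y' \in V$, $x'', y'' \in V^{[\perp]}$; the cross terms vanish because $[V, V^{[\perp]}] = 0$ and $[W, W^{[\perp]}] = 0$, and the diagonal terms are preserved by $U_0$ and $U_1$ respectively. Hence $U$ is $H$-unitary and restricts to $U_0$ on $V$.

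\emph{Step 2 (reattaching the radical).} In the general case, extend an $H$-unitary basis of $V_1$ together with a basis of the radical $V_0$: for each basis vector $f_j$ of $V_0$ one can find a "dual" vector $g_j$ with $[f_j, g_j] = 1$ and $[f_i, g_j] = 0$ for $i \neq j$, spanning a hyperbolic-type $H$-nondegenerate enlargement $\widetilde{V}$ of $V$. Do the same on the $W$-side using the images $U_0(f_j)$, and extend $U_0$ to an isometry $\widetilde{U_0} : \widetilde{V} \to \widetilde{W}$ by sending $g_j \mapsto \widetilde{g_j}$; this is forced to preserve the inner product by construction. Now $\widetilde{V}$ is $H$-nondegenerate, so Step 1 applies and produces the desired global $H$-unitary $U$, which in particular still satisfies $Ux = U_0(x)$ for all $x \in V \subseteq \widetilde{V}$.

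The main obstacle I expect is Step 2, the reattachment of the isotropic radical: one must verify that suitable dual vectors $g_j$ exist (this uses nondegeneracy of $[\cdot,\cdot]$ on all of $\mathbb{F}^n$, together with a dimension count — the functional $[\,\cdot\,, f_j]$ is nonzero, so its kernel has codimension one) and that the images $U_0(f_j)$ admit matching duals on the $W$-side simultaneously, i.e. that the extension $\widetilde{U_0}$ is genuinely an isometry and not merely a linear map. The bookkeeping of signatures in Step 1 is routine given Sylvester's law of inertia and the sign-preservation of $U_0$; the verification that $U_0 \oplus U_1$ preserves $[\cdot,\cdot]$ is a short computation using orthogonality of the summands. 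An alternative, slicker route would be induction on $\dim(\mathbb{F}^n) - \dim V$: adjoin one vector at a time, choosing its image to match inner products with the already-constructed part, splitting into the case where the new vector is (or can be chosen) non-isotropic versus the genuinely isotropic case — but the structural decomposition above makes the geometry more transparent.
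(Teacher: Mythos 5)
Your overall architecture is the same as the paper's (enlarge the isotropic part of $V$ by dual vectors to a hyperbolic, nondegenerate subspace, then extend across an $H$-nondegenerate complement using $H$-unitary bases and an inertia count), and your Step 1 is sound. The genuine gap is in Step 2, exactly where you yourself flag it, and it is not a side issue but the technical heart of Witt's theorem. The conditions you impose on the duals, $[f_j,g_j]=1$ and $[f_i,g_j]=0$ for $i\neq j$, do \emph{not} force the extension $\widetilde{U_0}$ to be an isometry. You must in addition arrange (a) compatibility with the nondegenerate part: $[v,g_j]=[U_0(v),\widetilde{g_j}]$ for all $v\in V_1$, most simply by choosing the $g_j$ orthogonal to $V_1$ and the $\widetilde{g_j}$ orthogonal to $U_0(V_1)$ (equivalently, by solving the duality conditions against a \emph{full} basis of $V$, not only against the radical); and (b) matching products among the duals themselves, $[g_j,g_k]=[\widetilde{g_j},\widetilde{g_k}]$ for all $j,k$, most simply by normalising all duals to be neutral and mutually orthogonal. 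Without (a) and (b) the claim ``forced to preserve the inner product by construction'' is unjustified, and with them their existence still requires an argument (a linear system solvable by nondegeneracy of $H$ on $\mathbb{F}^n$, followed by corrections inside $V$).

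This is precisely what the paper's proof supplies and your plan omits: it solves $[e_i,x_j]=\delta_{i,j}$ for \emph{all} basis vectors $e_1,\dots,e_m$ of $V$, replaces $\widetilde{e}_k$ by $\widetilde{e}_k-\tfrac{1}{2}[\widetilde{e}_k,\widetilde{e}_k]e_k$ to make the duals neutral, forms the explicit hyperbolic combinations $e_k'=\tfrac{1}{\sqrt{2}}(e_k-\widetilde{e}_k)$ and $e_k''=\tfrac{1}{\sqrt{2}}(e_k+\widetilde{e}_k)$ and verifies their inner products, and performs the identical construction on the $W$-side with $f_k=U_0(e_k)$ before invoking the basis-extension theorem and the signature bookkeeping. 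So the route you propose is the right one and essentially the paper's, but until the existence and compatibility of the dual vectors (items (a) and (b)) are actually established, the decisive step of the proof is missing.
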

\begin{proof}
Since $U_0$ is an isomorphism between $V$ and $W$, we can define $m:= \dim V = \dim W$. Denote by $(m_+, m_-, m_0)$ the signature of $[\cdot,\cdot]\vert_V$ and let $(e_1,\dots, e_m)$ be a basis of $V$ such that
\[
[e_i, e_j] = \llbrace\begin{array}{ll}
1&\text{ for } i = j = m_0+1,\dots, m_0+m_+\\
-1&\text{ for } i = j = m_0+m_++1,\dots, m\\
0&\text{ otherwise }
\end{array}\right..
\]
The existence of such a basis may be proved as we did for $H$-unitary bases in Theorem \ref{theo:h_unit_basis_exist}. Since $(e_1, \dots, e_m)$ is a basis, the system of linear equations
\begin{equation}\label{eq:witt}
[e_i, x_j] = \delta_{i,j},\qquad i,j = 1,\dots, m
\end{equation}
admits a solution $\left(\widetilde{e}_1,\dots,\widetilde{e}_m\right)$. Without loss of generality, we can assume that $\widetilde{e}_k$ is neutral for all $k=1,\dots, m_0$: otherwise replace $\widetilde{e}_k$ by $\widetilde{e}_k - \dfrac{1}{2}\left[\widetilde{e}_k, \widetilde{e}_k\right]e_k$. The resulting system still solves \eqref{eq:witt} and 
\[
\left[\widetilde{e}_k - \dfrac{1}{2}\left[\widetilde{e}_k, \widetilde{e}_k\right]e_k, \widetilde{e}_k - \dfrac{1}{2}\left[\widetilde{e}_k, \widetilde{e}_k\right]e_k\right] = \left[\widetilde{e}_k, \widetilde{e}_k\right] - \left[\widetilde{e}_k,  \widetilde{e}_k\right]\cdot\underbrace{\left[e_k, \widetilde{e}_k\right]}_{=1} + \dfrac{1}{4}\left[\widetilde{e}_k, \widetilde{e}_k\right]\cdot\underbrace{[e_k, e_k]}_{=0} = 0.
\]
Now define
\[
e_k' := \dfrac{1}{\sqrt{2}}\left(e_k - \widetilde{e}_k\right)\quad\text{ and }\quad e_k'' := \dfrac{1}{\sqrt{2}}\left(e_k + \widetilde{e}_k\right)
\]
for $k=1,\dots,m_0$. Then
\begin{align*}
[e_k', e_k'] &= \dfrac{1}{2}[e_k, e_k] - [e_k, \widetilde{e}_k] + \dfrac{1}{2}[\widetilde{e}_k, \widetilde{e}_k] = -[e_k,\widetilde{e}_k] = -1\,,
\end{align*}
\begin{align*}
[e_k'', e_k''] = \dfrac{1}{2}[e_k, e_k] + [e_k, \widetilde{e}_k] + \dfrac{1}{2}[\widetilde{e}_k, \widetilde{e}_k] = [e_k, \widetilde{e}_k] = 1
\end{align*}
and
\[
[e_k', e_k''] = \dfrac{1}{2}[e_k, e_k] + \dfrac{1}{2}[e_k, \widetilde{e}_k] - \dfrac{1}{2}[\widetilde{e}_k, e_k] - \dfrac{1}{2}[\widetilde{e}_k,\widetilde{e}_k] = 0\,.
\]
We conclude that $(e_{m_0+1},\dots, e_m, e_1',\dots, e_{m_0}', e_1'',\dots, e_{m_0}'')$ is an $H$-unitary system which can be extended with Theorem \ref{theo:basis_ext} to an $H$-unitary basis $\mathcal{B}$. Let $e_l$ denote the extending vectors, ${l=2m_0+m_++m_-+1,\dots,n}$.

Now consider $f_k := U_0(e_k)$ for $k=1,\dots, m$ and the resulting basis $(f_1,\dots, f_m)$ of $W$. This basis has the same properties as the basis $(e_1,\dots,e_m)$ in $V$. Thus, we define $f_k'$ and $f_k''$ for $k=1,\dots, m_0$ and $f_l$ for $l=2m_0+m_++m_-+1,\dots,n$ in an analogous way. Let $\mathcal{B}'$ denote the resulting $H$-unitary basis.

Since the bases $\mathcal{B}$ and $\mathcal{B}'$ are $H$-unitary, we can assume without loss of generality that the matrix $U\in\mathbb{F}^{n,n}$ defined by
\begin{align*}
\forall k=1,\dots, m_0,\quad& Ue_k' = f_k',\quad Ue_k'' = f_k''\\
\forall k=m_0+1,\dots,m,\quad& Ue_k = f_k\\
\forall l = 2m_0 + m_+ + m_- + 1,\dots, n, \quad& Ue_l = f_l
\end{align*}
is $H$-unitary. In fact, it suffices to sort the extending vectors $e_l$ and $f_l$ such that the $H$-positive vectors come first. Furthermore,
\[
Ue_k = \sqrt{2}(Ue_k' + Ue_k'') = \sqrt{2}(f_k' + f_k'') = f_k
\]
for all $k=1,\dots, m_0$ and thus $Ux = U_0(x)$ for all $x\in V$.
\end{proof}

With Witt's theorem on $H$-unitary extensions of inner product preserving isomorphisms, we can find an equivalent condition on the existence of an $H$-polar decomposition. However, we will first prove a more general result, which will be needed later for the main result.

\begin{theo}\label{theo:unit_trans_gen}
Let $X,Y\in\mathbb{F}^{n,n}$. Then there exists an $H$-unitary matrix $U\in\mathbb{F}^{n,n}$ such that $X = UY$ \iff
\[
X^{[*]}X = Y^{[*]}Y\quad\text{ and }\quad \ker X = \ker Y.
\]
\end{theo}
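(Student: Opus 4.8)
The plan is to prove the two directions separately, with the "only if" direction being the routine one and the "if" direction being where the real work lies. For the forward direction, assume $X = UY$ with $U$ being $H$-unitary. Then $X^{[*]}X = Y^{[*]}U^{[*]}UY = Y^{[*]}Y$ since $U^{[*]}U = I_n$, using the multiplicativity $(UY)^{[*]} = Y^{[*]}U^{[*]}$ noted in the remark after the definition of the $H$-adjoint. The kernel condition is immediate: since $U$ is invertible (being $H$-unitary, $U^{[*]}U = I_n$ forces $U$ invertible), $\ker X = \ker(UY) = \ker Y$.

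For the converse, suppose $X^{[*]}X = Y^{[*]}Y =: G$ and $\ker X = \ker Y =: \mathcal{K}$. The idea is to construct an inner-product-preserving isomorphism between $\ran Y$ and $\ran X$ and then invoke Witt's theorem (Theorem~\ref{theo:witt}). First I would observe that $X$ and $Y$ induce isomorphisms $\mathbb{F}^n / \mathcal{K} \to \ran X$ and $\mathbb{F}^n/\mathcal{K} \to \ran Y$ respectively; composing one with the inverse of the other gives a well-defined linear isomorphism $U_0 : \ran Y \to \ran X$ characterised by $U_0(Yv) = Xv$ for all $v \in \mathbb{F}^n$. This is well-defined precisely because $\ker X = \ker Y$: if $Yv = Yw$ then $v - w \in \mathcal{K} = \ker X$, so $Xv = Xw$. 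Then I would check that $U_0$ preserves the indefinite inner product: for $x = Yv$, $y = Yw$ in $\ran Y$,
\[
[U_0(x), U_0(y)] = [Xv, Xw] = [v, X^{[*]}Xw] = [v, Y^{[*]}Yw] = [Yv, Yw] = [x,y],
\]
where the two equalities using adjoints are just the defining property of the $H$-adjoint, and the middle equality is the hypothesis $X^{[*]}X = Y^{[*]}Y$. By Witt's theorem, $U_0$ extends to an $H$-unitary matrix $U \in \mathbb{F}^{n,n}$ with $Ux = U_0(x)$ for all $x \in \ran Y$; in particular $UYv = U_0(Yv) = Xv$ for every $v$, i.e. $UY = X$, which is what we wanted.

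The main obstacle I anticipate is verifying the hypotheses of Witt's theorem cleanly — specifically, making sure $U_0$ is genuinely well-defined as a map on $\ran Y$ (this is exactly where $\ker X = \ker Y$ enters and cannot be dropped) and that it is a bijection $\ran Y \to \ran X$ (injectivity follows from the inner-product-preservation only if $[\cdot,\cdot]$ restricted to $\ran Y$ is nondegenerate, which need not hold, so injectivity should instead be deduced directly from $\ker X = \ker Y$ via a dimension count: $\dim \ran X = n - \dim\mathcal{K} = \dim \ran Y$, and surjectivity of $U_0$ is clear from its definition). Once $U_0$ is shown to be an inner-product-preserving isomorphism between the two subspaces, Witt's theorem does all the remaining work, so the proof is short modulo this bookkeeping.
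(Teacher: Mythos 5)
Your proposal is correct and follows essentially the same route as the paper: both directions are handled identically, and the converse rests on defining the map $U_0:\ran Y\to\ran X$, $Yv\mapsto Xv$ (well-defined and bijective because $\ker X=\ker Y$, inner-product-preserving because $X^{[*]}X=Y^{[*]}Y$), and then invoking Witt's theorem to extend it to an $H$-unitary $U$ with $UY=X$. The only difference is presentational: the paper realises the same $U_0$ through an explicitly constructed basis $(f_1,\dots,f_m)$ of $\ran Y$ with $f_i=Ye_i$ and $g_i=Xe_i$, whereas you argue coordinate-free, which is equally valid.
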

\begin{proof}[Proof of Theorem \ref{theo:unit_trans_gen}]
First let $X = UY$ for some $H$-unitary $U$. Then
\[
X^{[*]}X = Y^{[*]}U^{[*]}UY = Y^{[*]}Y.
\]
Furthermore, $U$ is nonsingular and thus $\ker X = \ker Y$.

Now consider matrices $X,Y\in\mathbb{F}^{n,n}$ such that $X^{[*]}X = Y^{[*]}Y$ and $\ker X = \ker Y$. Choose a basis $(f_1,\dots, f_m)$ of $\ran Y$ such that
\[
[f_i, f_j] = \llbrace \begin{array}{lll}
1&\text{, if }& i = j = 1,\dots, m_+\\
-1&\text{, if }& i = j = m_+ + 1,\dots, m_+ + m_-\\
0&\text{, otherwise } &
\end{array}\right..
\]
where $(m_+,m_-,m_0)$ denotes the signature of $[\cdot,\cdot]\vert_{\ran(Y)}$ and $m = m_++m_-+m_0$. (Such a basis can be constructed as in the proof of Theorem \ref{theo:h_unit_basis_exist}.) Since $(f_1,\dots,f_m)$ forms a basis of $\ran Y$, there exist vectors $e_1,\dots,e_m$ such that $f_i = Ye_i$, $i=1,\dots,m$. Define $g_i := Xe_i$, $i=1,\dots,m$. We show that $(g_1,\dots,g_m)$ forms a basis of $\ran X$. Note first that $\ker X = \ker Y$ implies $\dim\ran X = \dim\ran Y$. Hence, it suffices to prove that $(g_1,\dots, g_m)$ is linearly independent. Consider arbitrary $\lambda_1,\dots,\lambda_m$ such that
\[
\sum_{i=1}^m \lambda_ig_i = 0.
\]
This means that $\sum_{i=1}^m \lambda_ie_i\in\ker X = \ker Y$ and thus
\[
\sum_{i=1}^m \lambda_if_i = 0.
\]
Since $(f_1,\dots, f_m)$ is linearly independent, it follows that $\lambda_1 = \cdots = \lambda_m = 0$. Furthermore, it holds
\[
[g_i,g_i] = [Xe_i, Xe_i] = [e_i, X^{[*]}Xe_i] = [e_i, Y^{[*]}Ye_i] = [Ye_i, Ye_i] = [f_i,f_i]
\]
for all $i=1,\dots,m$. Therefore, the linear map $U_0:\ran Y\rightarrow\ran X$, defined by $U_0(f_i) = g_i$ for $i=1,\dots,m$ is an isomorphism preserving the indefinite inner product. Hence, by Theorem \ref{theo:witt}, There exists an $H$-unitary matrix $U\in\mathbb{F}^ {n,n}$ such that $Ux = U_0(x)$ for all $x\in\ran Y$. It remains to prove that $X = UY$. Let $v\in\mathbb{F}^n$ be arbitrary and let $\lambda_1,\dots,\lambda_m\in\mathbb{F}$ be such that $Yv = \sum_{i=1}^m \lambda_if_i\in\ran Y$. Then
\[
v - \sum_{i=1}^m \lambda_ie_i\in\ker Y = \ker X
\]
and thus $Xv = \sum_{i=1}^m \lambda_ig_i$. This implies
\[
Xv = \sum_{i=1}^m \lambda_ig_i = \sum_{i=1}^m \lambda_i Uf_i = U\cdot\sum_{i=1}^m \lambda_if_i = UYv.\qedhere
\]
\end{proof}

From this more general result, we immediately deduce an equivalent condition on the existence of $H$-polar decompositions.

\begin{corol}[Existence of an $H$-polar decomposition]
A matrix $X\in\mathbb{F}^{n,n}$ admits an $H$-polar decomposition \iff there exists an $H$-selfadjoint matrix $A$ such that $X^{[*]}X = A^2$ and $\ker X = \ker A$.
\end{corol}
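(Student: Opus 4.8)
The plan is to obtain this as an immediate specialisation of Theorem \ref{theo:unit_trans_gen}, which already characterises when two matrices differ by an $H$-unitary factor on the left. The only extra ingredient needed is the observation that if $A$ is $H$-selfadjoint, then $A^{[*]}A = A^2$, so the hypothesis $X^{[*]}X = A^2$ is exactly the hypothesis $X^{[*]}X = A^{[*]}A$ required there.

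For the forward implication, I would start from an $H$-polar decomposition $X = UA$ with $U$ $H$-unitary and $A$ $H$-selfadjoint. Then $X^{[*]}X = A^{[*]}U^{[*]}UA = A^{[*]}A = A^2$, using $U^{[*]}U = I_n$ and $A^{[*]} = A$. Since $U$ is invertible, $\ker X = \ker(UA) = \ker A$. Thus $A$ is an $H$-selfadjoint matrix with the two desired properties.

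For the converse, suppose there is an $H$-selfadjoint $A$ with $X^{[*]}X = A^2$ and $\ker X = \ker A$. Because $A^{[*]} = A$, this reads $X^{[*]}X = A^{[*]}A$ together with $\ker X = \ker A$, so Theorem \ref{theo:unit_trans_gen} applied with $Y := A$ produces an $H$-unitary matrix $U$ with $X = UA$. Since $A$ is $H$-selfadjoint, this is an $H$-polar decomposition of $X$, completing the equivalence.

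I do not expect any real obstacle here: the statement is essentially a restatement of Theorem \ref{theo:unit_trans_gen} in the case where the second factor is $H$-selfadjoint, and the only thing to keep in mind is the identity $A^{[*]}A = A^2$ for $H$-selfadjoint $A$ and the fact that the kernel condition is unaffected by the invertible $H$-unitary factor.
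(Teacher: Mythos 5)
Your proposal is correct and follows essentially the same route as the paper: both apply Theorem \ref{theo:unit_trans_gen} with $Y = A$, using that $A^{[*]}A = A^2$ for an $H$-selfadjoint $A$. You merely spell out the forward direction more explicitly than the paper does, which is fine.
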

\begin{proof}
It suffices to set $Y = A$. Then $A^{[*]}A = A^2 = X^{[*]}X$ and the above theorem gives the existence of an $H$-unitary matrix $U\in\mathbb{F}^{n,n}$ such that $X = UA$.
\end{proof}

\begin{rem}\normalfont
\begin{enumerate}
	\item Whereas $X^*X = Y^*Y$ already implies $\ker X = \ker Y$ in Euclidean spaces, it is an additional restriction in indefinite inner product spaces. Therefore, this condition appears naturally if one generalises a theorem from Euclidean spaces to indefinite inner product spaces.
	\item Not every matrix $X\in\mathbb{F}^{n,n}$ admits an $H$-polar decomposition. Consider the following example from \cite[Section 8]{br}.
\end{enumerate}
\end{rem}

\begin{ex}
\normalfont
Let\[
H = \begin{bmatrix}
0 & 1\\
1 & 0
\end{bmatrix},\quad X = \begin{bmatrix}
0 & 1\\
0 & 1/2
\end{bmatrix}.
\]
Since 
\[
X^{[*]}X = \begin{bmatrix}
0 & 1\\
0 & 0
\end{bmatrix}
\]
does not admit any square root, $X$ cannot have an $H$-polar decomposition.
\end{ex}

\section{Polar Decompositions with Commuting Factors}\label{sec:commuting_factors}

In this section, we consider $H$-polar decompositions with commuting factors. The main result of this section generalises Theorem \ref{theo:pol_com_eucli} and was first proposed in \cite{mrr}. 

Throughout this section we assume $H\in\mathbb{F}^{n,n}$ to induce the indefinite inner product $[\cdot,\cdot]$.

\subsection{Equivalent conditions to the existence of polar decompositions with commuting factors}

\begin{theo}[{\cite[Theorem 10]{mrr}}]\label{theo:commuting_factors}
Let $\mathbb{F} = \mathbb{C}$ and $X\in\mathbb{C}^{n,n}$. The following assertions are equivalent.
\begin{enumerate}[label=(\roman*)]
	\item $X$ admits an $H$-polar decomposition with commuting factors.
	\item $X$ is $H$-normal and $\ker X = \ker X^{[*]}$.
	\item There exists an $H$-unitary matrix $V$ such that $X = VX^{[*]}$.
\end{enumerate}
\end{theo}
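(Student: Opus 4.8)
The plan is to mirror the structure of the Euclidean case (Theorem \ref{theo:pol_com_eucli}), closing the cycle (i) $\Rightarrow$ (iii) $\Rightarrow$ (ii) $\Rightarrow$ (i), since the implications (i) $\Rightarrow$ (iii) and (iii) $\Rightarrow$ (ii) are purely formal manipulations, while (ii) $\Rightarrow$ (i) is the substantial step that genuinely uses $\mathbb{F} = \mathbb{C}$.

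For \textbf{(i) $\Rightarrow$ (iii)}: suppose $X = UA$ with $U$ $H$-unitary, $A$ $H$-selfadjoint, and $UA = AU$. Exactly as in the Euclidean proof, write $X = UA = U^2 (U^{[*]}A^{[*]}) = U^2(AU)^{[*]} = U^2(UA)^{[*]} = U^2 X^{[*]}$, so (iii) holds with $V := U^2$, which is $H$-unitary as a product of $H$-unitary matrices. For \textbf{(iii) $\Rightarrow$ (ii)}: if $X = VX^{[*]}$ with $V$ $H$-unitary, then $XX^{[*]} = (XV^{[*]})(VX^{[*]}) = (VX^{[*]})^{[*]}X = X^{[*]}X$, so $X$ is $H$-normal; moreover $V$ is nonsingular, hence $\ker X = \ker(VX^{[*]}) = \ker X^{[*]}$. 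These two paragraphs are short and I expect no difficulty.

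The hard part is \textbf{(ii) $\Rightarrow$ (i)}. Here I would invoke the existence criterion via Theorem \ref{theo:unit_trans_gen}/its corollary: it suffices to produce an $H$-selfadjoint $A$ with $A^2 = X^{[*]}X$, $\ker A = \ker X$, and additionally $AX = XA$ (commutativity must be arranged, not just existence of \emph{some} square root). Set $N := X^{[*]}X$; it is $H$-selfadjoint and commutes with $X$ (since $X$ is $H$-normal, $X$ commutes with $X^{[*]}$). The task is to choose a square root $A$ of $N$ that is a polynomial-like function of $N$ so that it inherits commutativity with $X$, is $H$-selfadjoint, and has the correct kernel. Over $\mathbb{C}$ one can take a primary matrix function: on the spectral/generalized-eigenspace decomposition of $N$, pick on each nonzero-eigenvalue block a holomorphic branch of the square root, and on the eigenvalue-$0$ part of $N$ use the hypothesis $\ker X = \ker X^{[*]}$ to show $N$ is actually \emph{diagonalizable} at $0$ (i.e., the $0$-generalized-eigenspace equals $\ker N = \ker X$, because $N z = 0 \Rightarrow [Xz,Xz]=[z,Nz]=0$, and then a neutrality/nondegeneracy argument forces $Xz = 0$), so that one may set $A$ to be $0$ there. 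Then $A = p(N)$ for a suitable polynomial $p$ (interpolating the chosen branches), which gives $AX = XA$ for free; $H$-selfadjointness of $A$ follows because $N^{[*]} = N$ and $p$ can be chosen with the branch points placed symmetrically so $p(\bar z) = \overline{p(z)}$ on $\Lambda(N)$, making $A^{[*]} = p(N)^{[*]} = p(N^{[*]}) = A$; and $A^2 = N = X^{[*]}X$, $\ker A = \ker N = \ker X$ by construction. Finally, Theorem \ref{theo:unit_trans_gen} yields an $H$-unitary $U$ with $X = UA$, and since $A$ commutes with $X = UA$ one deduces $A$ commutes with $U$ on $\ran A$; a short argument extending this to all of $\mathbb{F}^n$ (using $\ker A = \ker X$ and $H$-nondegeneracy, or choosing $U$ to act as identity on the appropriate complement) completes the claim that $U$ and $A$ commute.

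The main obstacle I anticipate is precisely the last point: Theorem \ref{theo:unit_trans_gen} only asserts \emph{existence} of an $H$-unitary $U$ with $X = UA$, not uniqueness, so commutativity $UA = AU$ is not automatic and must be secured by the right choice of $U$ — one needs to check that the freedom in Witt's extension (Theorem \ref{theo:witt}) can be used to make $U$ commute with $A$, for instance by building $U$ so that it restricts to the identity on $\ker A$ and is compatible with the $A$-invariant spectral decomposition of $\ran A$. I would handle this by decomposing $\mathbb{F}^n$ into $A$-invariant $H$-nondegenerate pieces on which $A$ is invertible, constructing $U$ block-wise there, and setting $U = I$ on $\ker A = \ker X$; this is also where the reference to \cite{mrr} for the full technical details is appropriate.
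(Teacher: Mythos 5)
Your cycle (i) $\Rightarrow$ (iii) $\Rightarrow$ (ii) $\Rightarrow$ (i) is sound in structure, and your two formal implications coincide with the paper's computations; the gap lies entirely in (ii) $\Rightarrow$ (i), in two places. First, your claim that $N := X^{[*]}X$ satisfies $\ker N = \ker X$ with semisimple eigenvalue $0$ does not follow from (ii): from $Nv = 0$ you only get $[Xv,Xv] = [v,Nv] = 0$, i.e.\ $Xv$ is $H$-neutral, and in an indefinite inner product space a nonzero neutral vector is perfectly possible, so no ``neutrality/nondegeneracy argument'' forces $Xv = 0$. Concretely, for $X = \begin{bmatrix} 0 & z\\ 0 & 0\end{bmatrix}$, $H = Z_2$, $\vert z\vert = 1$ (\Href{4.II}), condition (ii) holds while $X^{[*]}X = 0$, so $\ker N = \mathbb{C}^2 \neq \ker X$; moreover any primary function $p(N)$ is then a scalar multiple of $I_2$, so no matrix of the form $p(N)$ can satisfy both $A^2 = N$ and $\ker A = \ker X$. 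Thus the device ``take $A$ as a function of $X^{[*]}X$'' breaks down exactly in the nilpotent situations that make the theorem nontrivial.

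Second, even granted an $H$-selfadjoint $A$ with $A^2 = X^{[*]}X$, $\ker A = \ker X$ and $AX = XA$, the last step is not a removable technicality: by the Proposition in Section \ref{sec:witt_extensions}, for a fixed $A$ either every $H$-unitary $U$ with $X = UA$ commutes with $A$ or none does, so the freedom in Witt's extension (Theorem \ref{theo:witt}) cannot be used to force $UA = AU$; and \Href{2.III} with $s \neq 0$ exhibits an $A$ satisfying all of your conditions, including $AX = XA$, for which no commuting $H$-unitary partner exists. The paper avoids both obstacles by routing the hard implication through (iii): the equivalence (ii) $\Leftrightarrow$ (iii) is immediate from Theorem \ref{theo:unit_trans_gen} with $Y = X^{[*]}$, and for (iii) $\Rightarrow$ (i) one first checks $XV = VX$, then takes an $H$-unitary square root $U$ of the \emph{invertible} matrix $V$ that commutes with $X$ (Lemma \ref{lem:h_unit_square_root}, after rotating the spectrum off $(-\infty,0]$ if necessary), and sets $A := UX^{[*]}$; $H$-selfadjointness of $A$ is a short computation and commutativity of the factors follows from Lemma \ref{lem:equiv_AUUA_XUUX}. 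If you want to keep your cycle, replace your construction of $A$ by this square-root-of-$V$ argument; taking a square root of the possibly nilpotent $X^{[*]}X$ cannot work.
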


We note that the theorem restricts the equivalence to $\mathbb{F} = \mathbb{C}$. Even though it has been shown for $H\in\mathbb{R}^{n,n}$ in \cite{bmrrr} that a real matrix $X\in\mathbb{R}^{n,n}$ admits an $H$-polar decomposition over $\mathbb{R}$ if and only if it admits a complex one, this equivalence cannot be extended to $H$-polar decompositions with commuting factors.
 
\begin{ex}\normalfont
Consider the setting of \Href{2.IV} from the appendix.
\[
X = \begin{bmatrix}
0 & 1 & r\\
0 & 0 & -1\\
0 & 0 & 0
\end{bmatrix}\quad\text{ and }\quad H = \begin{bmatrix}
0 & 0 & 1\\
0 & 1 & 0\\
1 & 0 & 0
\end{bmatrix},
\]
where $r\in\mathbb{R}$. The matrix $X$ is $H$-normal and $\ker X = \ker X^{[*]}$, but there exists no $H$-polar decomposition of $X$ such that its factors commute. To prove this, suppose that $X = UA$ is an $H$-polar decomposition with commuting factors. Since $\ker A = \ker X$ and $A^{[*]} = A$, it holds that
\[
A = \begin{bmatrix}
0 & a_{12} & a_{13}\\
0 & a_{22} & a_{12}\\
0 & 0 & 0
\end{bmatrix}
\]
for some $a_{12},a_{13},a_{22}\in\mathbb{R}$ such that $a_{12}^2 - a_{22}a_{13}\neq 0$. In section \ref{sec:witt_extensions}, we show that $AX = XA$ is necessary for $UA = AU$. But this means that
\[
\begin{bmatrix}
0 & 0 & -a_{12}\\
0 & 0 & -a_{22}\\
0 & 0 & 0
\end{bmatrix} = AX = XA = \begin{bmatrix}
0 & a_{22} & a_{12}\\
0 & 0 & 0\\
0 & 0 & 0
\end{bmatrix},
\]
and thus $a_{12}^2 - a_{22}a_{13} = 0$. 
\end{ex}
At the end of this section, we will see that we can extend Theorem \ref{theo:commuting_factors} to $\mathbb{F} = \mathbb{R}$ in the case of a restricted class of matrices.

For the proof of Theorem \ref{theo:commuting_factors}, we note that the equivalence (ii) $\Leftrightarrow$ (iii) is a direct consequence of Theorem \ref{theo:unit_trans_gen} for $Y = X^{[*]}$. Thus, we only need to show the equivalence (i) $\Leftrightarrow$ (iii). 

\begin{lem}\label{lem:equiv_AUUA_XUUX}
Let $X = UA\in\mathbb{F}^{n,n}$ be an $H$-polar decomposition. Then $UA = AU$ \iff $UX = XU$.
\end{lem}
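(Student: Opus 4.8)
The plan is to prove the two implications of the equivalence $UA = AU \Leftrightarrow UX = XU$ directly, using the definition $X = UA$ together with the fact that $U$ is $H$-unitary (hence invertible). For the forward direction, assume $UA = AU$. Then $XU = UAU = U(AU) = U(UA) = U^2 A$, while $UX = U(UA) = U^2 A$ as well, so $UX = XU$ follows immediately. The only fact used here is that multiplication is associative and that $U$ commutes with $A$; no properties of $H$-unitarity or $H$-selfadjointness are needed for this direction.

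For the reverse direction, assume $UX = XU$. I want to recover $UA = AU$. Since $U$ is $H$-unitary it is invertible, so from $X = UA$ we get $A = U^{-1}X$. The hypothesis $UX = XU$ can be rewritten as $U(UA) = (UA)U$, i.e. $U^2 A = UAU$; left-multiplying by $U^{-1}$ gives $UA = AU$ at once. So in fact this direction is equally short: from $UX = XU$ and $X = UA$ we have $U\cdot UA = UA\cdot U$, and cancelling one factor of $U$ on the left (using invertibility of $U$) yields $UA = AU$.

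So the proof is essentially a two-line manipulation in each direction, and the genuine content is simply that $U$ is invertible (being $H$-unitary) and that $X = UA$. I do not expect any real obstacle: the potential subtlety would be if one needed $A$ or $U$ to have some special structure, but cancelling the invertible factor $U$ is all that is required, and the equation $UX = XU$ unwinds to $U^2A = UAU$, which is $U(UA - AU) = 0$, equivalently $UA - AU = 0$. One should just be careful to phrase the cancellation as left-multiplication by $U^{-1}$ rather than invoking any cancellation law. This lemma then reduces the task of checking that the factors of an $H$-polar decomposition commute to the (often more tractable) task of checking that $U$ commutes with $X$ itself, which is why it is stated as a stepping stone toward the equivalence (i) $\Leftrightarrow$ (iii) of Theorem~\ref{theo:commuting_factors}.
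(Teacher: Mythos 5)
Your proof is correct and follows essentially the same route as the paper: the forward direction is the identity $UX = U^2A = UAU = XU$, and the reverse direction cancels the invertible factor $U$ on the left (the paper writes this cancellation via $U^{[*]}$, which is exactly $U^{-1}$ since $U$ is $H$-unitary). No gaps.
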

\begin{proof}
Suppose that the factors of the decomposition commute. Then
\[
UX = U^2A = UAU = XU.
\]
On the other hand, if $UX = XU$, then
\[
AU = U^{[*]}UAU = U^{[*]}XU = U^{[*]}U X = X = UA.\qedhere
\]
\end{proof}
\begin{lem}\label{lem:h_unit_square_root}
Let $V\in\mathbb{C}^{n,n}$ be an $H$-unitary matrix and $X\in\mathbb{C}^{n,n}$ such that $VX = XV$. Then there exists an $H$-unitary matrix $U\in\mathbb{C}^{n,n}$ such that
\[
V = U^2\quad\text{ and }\quad UX = XU.
\]
\end{lem}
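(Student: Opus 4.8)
The plan is to construct $U$ as an $H$-unitary square root of $V$ that lies in the (unital) algebra generated by $V$ (and $V^{-1}$), because any matrix in that algebra automatically commutes with everything that commutes with $V$ — in particular with $X$. So the real content is: produce an $H$-unitary square root of $V$ that is a polynomial in $V$ and $V^{-1}$.

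First I would recall the key spectral constraint on $H$-unitary matrices: if $V$ is $H$-unitary then its spectrum is symmetric with respect to the unit circle, i.e. $\lambda \in \Lambda(V)$ implies $1/\overline{\lambda} \in \Lambda(V)$, and the Jordan structures at $\lambda$ and $1/\overline{\lambda}$ coincide. Decompose $\mathbb{C}^n$ into the spectral ($H$-nondegenerate) invariant subspaces of $V$: the part $\mathcal{M}_1$ on which the eigenvalues lie on the unit circle, and, for each eigenvalue $\lambda$ with $|\lambda| \neq 1$, the $H$-nondegenerate "hyperbolic" pair block $\mathcal{M}_\lambda \oplus \mathcal{M}_{1/\overline{\lambda}}$. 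On each such piece I would define a square root via a holomorphic functional calculus branch of $z \mapsto z^{1/2}$ chosen on a simply connected neighbourhood of the relevant part of the spectrum; the union of these branches gives a single function $f$ holomorphic on a neighbourhood of $\Lambda(V)$ with $f(z)^2 = z$, and I set $U := f(V)$. Being $f(V)$, $U$ is a polynomial in $V$ (on each spectral component), hence $UX = XU$ is immediate, and $U^2 = V$.

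The point that needs care is that $U = f(V)$ is actually $H$-unitary, i.e. $U^{[*]}U = I$. The way to get this is to choose the branches compatibly with the spectral symmetry. On the unit-circle part $\mathcal{M}_1$, pick the standard branch with $\sqrt{1}=1$; since $V^{[*]} = V^{-1}$ there and $f(\overline{z^{-1}})^{-1} = \overline{f(z)}$... — more carefully: use that for an $H$-unitary $V$ one has $V^{[*]} = V^{-1}$, so $f(V)^{[*]} = \overline{f}(V^{[*]}) = \overline{f}(V^{-1})$, and it suffices to arrange the branch so that $\overline{f(1/\overline{z})} = 1/f(z)$ on $\Lambda(V)$. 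For a $\lambda$ off the unit circle this is a constraint linking the branch near $\lambda$ and the branch near $1/\overline{\lambda}$, and since these are disjoint spectral components one is free to make the choice; on the unit circle the standard branch already satisfies it. With that compatibility, $f(V)^{[*]} f(V) = \overline{f}(V^{-1}) f(V)$, and using $\overline f(w^{-1}) = 1/f(w)$ evaluated through the functional calculus at $w = V$ gives $f(V)^{-1} f(V) = I$. Thus $U$ is $H$-unitary.

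The main obstacle I expect is precisely the branch-compatibility bookkeeping in the mixed case where $V$ has eigenvalues both on and off the unit circle, and especially eigenvalue $-1$ on the unit circle (where the standard principal branch must be avoided on part of the circle): one must check that the simply connected neighbourhoods can be chosen to accommodate the required identity $\overline{f(1/\overline z)} = 1/f(z)$ without forcing a discontinuity. An alternative, cleaner route that sidesteps explicit branches: take \emph{any} square root $U_0$ of $V$ that is a polynomial in $V$ (these exist since $V$ is invertible), then note $U_0^{[*]}U_0$ is a polynomial in $V$, commutes with $U_0$, and squares to $V^{[*]}V = I$; analyse this to correct $U_0$ — but this essentially reproduces the same case analysis, so I would present the functional-calculus argument as the main line.
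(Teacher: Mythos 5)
Your main line is essentially the paper's: take a square root of $V$ by the holomorphic functional calculus, so that $U=f(V)$ commutes with everything commuting with $V$ (hence with $X$), and then verify $H$-unitarity from $f(V)^{[*]}=\overline{f}(V^{-1})$ together with a symmetry property of the chosen branch; this is exactly what the paper does when $\Lambda(V)\cap(-\infty,0]=\emptyset$, using the principal branch and the identities $Hf(V)=f\bigl((V^*)^{-1}\bigr)H$ and $f\bigl((V^*)^{-1}\bigr)=\bigl(f(V)^*\bigr)^{-1}$. Where you diverge is in the remaining case, and that is also where your write-up stops short: you propose a piecewise choice of branches on the spectral components subject to the compatibility condition $\overline{f(1/\overline{z})}=1/f(z)$, but you explicitly leave open whether this bookkeeping (in particular near eigenvalues on the negative real axis or at $-1$) can always be carried out, so as written the proof is incomplete at precisely that point.

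Two remarks. First, the paper avoids the issue entirely with a scalar rotation: since $\Lambda(V)$ is finite, choose $\theta$ so that the ray $e^{i\theta}[0,\infty)$ contains no eigenvalue, set $V':=e^{i(\pi-\theta)}V$, which is again $H$-unitary (a unimodular multiple of an $H$-unitary matrix), commutes with $X$, and has no eigenvalue in $(-\infty,0]$; apply the principal-branch case to get $U'$, and put $U:=e^{i(\theta-\pi)/2}U'$, so $U^2=e^{i(\theta-\pi)}V'=V$. This is cleaner than patching branches. Second, your route can in fact be completed: for a unimodular eigenvalue $\lambda$ the function $h(z):=1/\overline{f(1/\overline{z})}$ is a local square root with $h(\lambda)=f(\lambda)$ (because $\vert f(\lambda)\vert=1$), hence $h=f$ and the compatibility is automatic for \emph{any} local branch, including at $\lambda=-1$; for a pair $\lambda,\,1/\overline{\lambda}$ off the unit circle one simply defines the branch near $1/\overline{\lambda}$ by this reflection formula. (One also needs, and has, that $\Lambda(V)$ is closed under $\lambda\mapsto 1/\overline{\lambda}$, since $V^{-1}=H^{-1}V^*H$.) So the idea is sound, but you should either supply this verification or use the rotation trick; merely flagging the obstacle does not finish the argument.
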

\begin{proof}
Let $\Lambda(A)$ denote the spectrum of a matrix $A\in\mathbb{C}^{n,n}$. Since the square root cannot be defined on the whole complex plane, we need to study two cases.
\begin{description}
	\item[Case 1: $\Lambda(V)\cap (-\infty,0{]}=\emptyset$.] Since $V$ has only finitely many eigenvalues, there exists a curve $\Gamma$ that encloses $\Lambda(V)$ and does not cross $(-\infty,0]$. Now let $f:\mathbb{C}\setminus(-\infty,0]\rightarrow\mathbb{C}$ be the principal branch of the complex square root. Then $f$ is analytic on $\Gamma$ and its enclosed area and it holds (cf. \cite[Theorem 1.12]{h})
	\[
	f(V) = \dfrac{1}{2\pi i}\int_\Gamma f(z)(zI_n - V)^{-1} \d{z}.
	\]
	The fact that $V$ is $H$-unitary implies the equality $VH^{-1} = H^{-1}(V^*)^{-1}$ and thus for every $z\in\mathbb{C}\setminus(-\infty, 0]$
	\begin{align*}
	H(zI_n - V)^{-1} &= \left((zI_n-V)H^{-1}\right)^{-1} \\
	&= \left(H^{-1}\left(zI_n - (V^*)^{-1}\right)\right)^{-1}\\
	&= \left(zI_n - (V^*)^{-1}\right)^{-1}H.
	\end{align*}
	This yields
	\[
	Hf(V) = f\left((V^*)^{-1}\right)H.
	\]
	For this, we assume without loss of generality that $\Gamma$ also encloses $\Lambda\left((V^*)^{-1}\right)$. The results \cite[Theorems 1.15 and 1.18]{h} finally give
	\[
	f\left((V^*)^{-1}\right) = \left(f(V)^*\right)^{-1}.
	\]
	From these two equations it follows that $U := f(V)$ is $H$-unitary. By definition, it satisfies $V = U^2$ and \cite[Theorem 1.13]{h} states that the commutativity is inherited from $V$.
	
	\item[Case 2: $\Lambda(V)\cap (-\infty,0{]}\neq\emptyset$.] Since $V$ has only finitely many eigenvalues, there exists an angle $\theta\in [0,2\pi)$ such that the ray $e^{i\theta} [0,+\infty)$ does not contain any eigenvalues. Now consider the matrix $V' := e^{i(\pi-\theta)}V$. Then
	\[
	V'^{[*]}V' = \left(e^{i(\theta-\pi)}V^{[*]}\right)\left(e^{i(\pi-\theta)}V\right) = V^{[*]}V = I_n
	\]
	and thus $V'$ is $H$-unitary. Furthermore, $V'$ commutes with $X$ and does not have any nonpositive real eigenvalues. According to the first case, there exists an $H$-unitary matrix $U'$ such that $V' = (U')^2$ and $U'$ commutes with $X$. Define the $H$-unitary matrix $U := e^{i(\theta-\pi)/2}U'$. This matrix commutes with $X$ and 
	\[
	U^2 = e^{i(\theta-\pi)}(U')^2 = e^{i(\theta-\pi)}V' = V.\qedhere
	\]
\end{description}
\end{proof}

\begin{proof}[Proof of Theorem \ref{theo:commuting_factors}]
We already noted that the equivalence (ii) $\Leftrightarrow$ (iii) is a corollary of Theorem \ref{theo:unit_trans_gen}. We will now prove the equivalence (i) $\Leftrightarrow$ (iii).

First, suppose that there exists an $H$-polar decomposition $X = UA$ with commuting factors. Then
\[
X = UA = U^ 2(U^ {[*]}A) = U^ 2 (AU)^ {[*]} = U^ 2 (UA)^ {[*]} = U^ 2X^ {[*]}.
\]
It suffices to set $V := U^ 2$.

Now suppose that there exists an $H$-unitary matrix $V$ such that $X = VX^{[*]}$. Then
\[
XV = VX^{[*]}V = V(VX^{[*]})^{[*]}V = VXV^{[*]}V = VX.
\]
By Lemma \ref{lem:h_unit_square_root}, there exists an $H$-unitary square root $U$ of $V$ that commutes with $X$. By defining $A := UX^ {[*]}$, this gives
\[
X = VX^ {[*]} = UA.
\]
Furthermore, $A$ is $H$-selfadjoint as
\[
A^{[*]} = XU^{[*]} = U^2X^{[*]}U^{[*]} = U^2(UX)^{[*]} = U^2 (XU)^{[*]} = U^2 U^{[*]} X^{[*]} = UX^{[*]} = A.
\]
Thus, the decomposition $X = UA$ is an $H$-polar decomposition. Finally, Lemma \ref{lem:equiv_AUUA_XUUX} implies that its factors commute.
\end{proof}

We have seen that Theorem \ref{theo:commuting_factors} cannot be extended to $\mathbb{F} = \mathbb{R}$ for all matrices. However, if we restrict ourselves to a restricted class of matrices, at least a necessary condition for the existence of an $H$-polar decomposition with commuting factors can be proved. Also, this result relates the study of $H$-polar decomposition with commuting factors to the more restrictive definition we shortly discussed at the beginning of Section \ref{ssec:pol_decomp_indef}. As for the proofs, we mainly rely on \cite{hmt}.

\begin{theo}\label{theo:comm_fac_real_case}
Let $\mathbb{F} = \mathbb{R}$ and let $X\in\mathbb{R}^{n,n}$ be $H$-normal.Then $X$ admits an $H$-polar decomposition with commuting factors, if the following conditions are satisfied.
\begin{itemize}
	\item The matrix $X^{[*]}X$ has no negative eigenvalue.
	\item If $0$ is an eigenvalue of $X^{[*]}X$, then it is simple.
	\item It holds $\ker X^{[*]}X = \ker X$.
\end{itemize}
\end{theo}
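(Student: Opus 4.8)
The plan is to follow the scheme of Theorem~\ref{theo:commuting_factors} but, since over $\mathbb{R}$ we cannot extract a square root of an arbitrary $H$-unitary matrix, to build the $H$-selfadjoint factor directly as a (real) square root of $N := X^{[*]}X$. First I would record what $H$-normality gives for free: $N = X^{[*]}X = XX^{[*]}$ is $H$-selfadjoint, both $X$ and $X^{[*]}$ commute with $N$, and the third hypothesis says $\ker N = \ker X$; moreover $X^{[*]}(\ker X)\subseteq\ker X$, because $v\in\ker X=\ker N$ gives $XX^{[*]}v=Nv=0$.

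The structural heart of the argument is that $\ker X$ is $H$-nondegenerate. If $0\notin\Lambda(N)$ this is vacuous, since then $X$ is invertible. Otherwise $0$ is, by hypothesis, a \emph{simple} eigenvalue of the $H$-selfadjoint matrix $N$, so its root subspace is one-dimensional and equals $\ker N = \ker X$; by the sign characteristic of real $H$-selfadjoint matrices this eigenspace is spanned by an $H$-positive or $H$-negative vector and is therefore $H$-nondegenerate. (This is the only place the simplicity hypothesis enters; without it $\ker X$ may be neutral, which is precisely what happens in the example preceding this theorem and is why that $X$ has no commuting decomposition.) It follows that $\mathbb{R}^n = \ker X \oplus \mathcal{M}$ with $\mathcal{M}:=(\ker X)^{[\perp]}$ $H$-nondegenerate, and a one-line check using $X^{[*]}(\ker X)\subseteq\ker X$ shows that $X$, $X^{[*]}$ and $N$ each map $\mathcal{M}$ into itself.

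Next I would construct $A$. Write $\tilde H$, $\tilde X$, $\tilde N$ for the restrictions to $\mathcal{M}$; then $\tilde N = \tilde X^{[*]}\tilde X$ (adjoint with respect to $\tilde H$) is $\tilde H$-selfadjoint and invertible with $\Lambda(\tilde N)\cap(-\infty,0]=\emptyset$, so its principal square root $A_0:=\tilde N^{1/2}$ is a \emph{real} matrix. Being a polynomial in $\tilde N$, it commutes with $\tilde X$; and from $\tilde N^* = \tilde H\tilde N\tilde H^{-1}$ one gets $A_0^* = (\tilde N^*)^{1/2} = \tilde H A_0\tilde H^{-1}$, i.e.\ $A_0$ is $\tilde H$-selfadjoint. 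Let $A$ be the operator that is $0$ on $\ker X$ and $A_0$ on $\mathcal{M}$. Then $A$ is $H$-selfadjoint, $A^2 = X^{[*]}X$ (both sides vanish on $\ker X$ and agree on $\mathcal{M}$), $\ker A = \ker X$ (as $A_0$ is invertible), and $AX = XA$ (trivially on $\ker X$, and on $\mathcal{M}$ because $A_0$ commutes with $\tilde X$). By the corollary on the existence of $H$-polar decompositions there is an $H$-unitary $U$ with $X = UA$.

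It remains to see $UA = AU$, equivalently (Lemma~\ref{lem:equiv_AUUA_XUUX}) $UX = XU$. The key is that $U$ may be taken to respect the splitting: the $U$ produced by Theorem~\ref{theo:unit_trans_gen} agrees on $\ran A$ with the inner-product-preserving isomorphism $\ran A\to\ran X$, and here $\ran A = \ran X = \mathcal{M}$, so $U(\mathcal{M})=\mathcal{M}$ and hence $U(\ker X)=\ker X$. On $\ker X$ the factor $A$ vanishes and $U$ preserves $\ker X$, so $UA=0=AU$ there; on $\mathcal{M}$ we have $\tilde X = (U|_{\mathcal M})A_0$ with $A_0$ invertible, so $\tilde X A_0 = A_0\tilde X$ yields $(U|_{\mathcal M})A_0^2 = A_0(U|_{\mathcal M})A_0$ and, cancelling $A_0$ on the right, $(U|_{\mathcal M})A_0 = A_0(U|_{\mathcal M})$. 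Hence $UA=AU$ on all of $\mathbb{R}^n$. I expect the main obstacle to be the non-degeneracy of $\ker X$ — it is the one step that genuinely uses the structure theory of $H$-selfadjoint matrices and pins down the role of the simplicity hypothesis — together with the bookkeeping needed to realise $U$ block-diagonally; everything else is routine linear algebra.
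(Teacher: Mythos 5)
Your proposal is correct and takes essentially the same route as the paper: isolate $\ker X$ (the $0$-root subspace of $X^{[*]}X$) from an $H$-nondegenerate complement — the paper obtains this splitting from Lemma \ref{lem:decomposition_selfadjoint}, you from the simplicity of the eigenvalue $0$ together with the standard nondegeneracy of root subspaces of $H$-selfadjoint matrices — then take the real principal square root of $X^{[*]}X$ on the complement, extend the induced inner-product-preserving map by Witt's theorem, and use $H$-normality to get the commutation. The remaining differences are presentational only: the paper works with explicit blocks (deriving $X_2=0$ and the block form of $U$ by computation), while you argue invariantly and cancel the invertible $A_0$ after noting that the Witt extension maps $\ran A=\ran X$ onto itself; both arguments are sound.
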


We note that we do not need the additional kernel constraint $\ker X^{[*]} = \ker X$. With Theorem \ref{theo:commuting_factors}, this follows from the existence of an $H$-polar decomposition with commuting factors. Theorem \ref{theo:comm_fac_real_case} relates $H$-polar decompositions with commuting factors to the more restrictive definition of $H$-polar decompositions. In fact, the tree latter conditions of Theorem \ref{theo:comm_fac_real_case} are equivalent to the existence of the so-called \emph{canonical generalised polar decomposition}. This decomposition represents a generalisation of the \emph{generalised polar decomposition}, that we discussed at the beginning of section \ref{ssec:pol_decomp_indef}, to singular matrices. In this case, only the $H$-selfadjoint factor is uniquely determined.

In the following, we assume $H\in\mathbb{R}^{n,n}$ to be symmetric nonsingular.

\begin{theo}[Sylvester equation, cf. \cite{s}]\label{theo:sylv_eq}
Let $A,B\in\mathbb{C}^{n,n}$. Then the following assertions are equivalent.
\begin{enumerate}[label=(\roman*)]
	\item $A$ and $B$ have no common eigenvalue.
	\item The equation $AX = XB$ has the unique solution $X = 0$.
\end{enumerate}
\end{theo}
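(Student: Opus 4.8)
The plan is to prove the logically equivalent statement obtained by contraposition on both sides: \emph{$A$ and $B$ have a common eigenvalue if and only if the Sylvester equation $AX = XB$ has a nonzero solution $X$.} Equivalently, one may regard $\mathcal{L}\colon \mathbb{C}^{n,n}\to\mathbb{C}^{n,n}$, $\mathcal{L}(X) = AX - XB$, as a linear operator on the $n^2$-dimensional matrix space; statement (ii) says precisely that $\mathcal{L}$ is injective, and the eigenvalues of $\mathcal{L}$ turn out to be the differences $\lambda - \mu$ with $\lambda\in\Lambda(A)$ and $\mu\in\Lambda(B)$, so $\mathcal{L}$ is injective exactly when $\Lambda(A)\cap\Lambda(B) = \emptyset$. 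I will, however, give the more elementary two-sided argument rather than developing the spectral theory of $\mathcal{L}$.

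For the direction (i) $\Rightarrow$ (ii) I would argue by contradiction: assume $AX = XB$ with $X\neq 0$. A trivial induction gives $A^kX = XB^k$ for all $k\geq 0$, hence $p(A)X = Xp(B)$ for every polynomial $p\in\mathbb{C}[t]$. Taking $p$ to be the characteristic polynomial of $B$, the Cayley--Hamilton theorem yields $p(B) = 0$ and therefore $p(A)X = 0$. Since $X\neq 0$, the nonzero subspace $\ran X$ is contained in $\ker p(A)$, so $p(A)$ is singular. Factoring $p(t) = \prod_j (t-\mu_j)$ with $\mu_j\in\Lambda(B)$, singularity of $p(A) = \prod_j(A-\mu_jI_n)$ forces $A-\mu_jI_n$ to be singular for some $j$, i.e. $\mu_j\in\Lambda(A)\cap\Lambda(B)$, contradicting (i).

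For the converse (ii) $\Rightarrow$ (i) I would prove the contrapositive constructively. If $\lambda\in\Lambda(A)\cap\Lambda(B)$, choose a nonzero $v\in\mathbb{C}^n$ with $Av = \lambda v$ and a nonzero $w\in\mathbb{C}^n$ with $w^TB = \lambda w^T$ (such a left eigenvector of $B$ exists because $B$ and $B^T$ share the same characteristic polynomial and hence the same spectrum). Then $X := vw^T\neq 0$ and
\[
AX = (Av)w^T = \lambda vw^T = v(\lambda w^T) = v(w^TB) = XB,
\]
so (ii) fails.

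The whole proof is essentially self-contained, relying only on the Cayley--Hamilton theorem and the factorisation of the characteristic polynomial over $\mathbb{C}$. I do not expect a real obstacle; the only points requiring a little care are the deduction ``$p(A)X = 0$ with $X\neq 0$ $\Rightarrow$ $p(A)$ singular'' (which uses only $\{0\}\neq\ran X\subseteq\ker p(A)$) and the production of a left eigenvector of $B$ from a right eigenvector of $B^T$.
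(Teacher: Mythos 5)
Your proposal is correct. Note that the paper does not prove this theorem at all: it is stated with a citation to Sylvester's original paper and then only used (in Lemma \ref{lem:decomposition_selfadjoint}), so there is no internal proof to compare against. Your argument is the standard self-contained one and both directions are sound: the implication (i) $\Rightarrow$ (ii) via $A^kX = XB^k$, Cayley--Hamilton applied to the characteristic polynomial $p$ of $B$, and the factorisation $p(A) = \prod_j (A-\mu_j I_n)$ (so that singularity of $p(A)$ forces some $A-\mu_j I_n$ to be singular) is complete, and the converse via the rank-one solution $X = vw^T$ built from a right eigenvector of $A$ and a left eigenvector of $B$ for a common eigenvalue $\lambda$ indeed satisfies $AX = \lambda vw^T = XB$. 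The only steps needing care are exactly the two you flag, and both are handled correctly, so nothing is missing.
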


\begin{lem}\label{lem:decomposition_selfadjoint}
Let $A\in\mathbb{R}^{n,n}$ be $H$-selfadjoint. Then there exists a nonsingular matrix $T\in \mathbb{R}^{n,n}$ such that
\[
T^{-1}AT = \begin{bmatrix}
A_1 & 0\\
0 & A_0
\end{bmatrix}\quad\text{ and }\quad T^*HT = \begin{bmatrix}
H_1 & 0\\
0 & H_0
\end{bmatrix},
\]
where $A_1$ and $H_1$ have the same size, $A_1$ is nonsingular and $A_0$ is nilpotent.
\end{lem}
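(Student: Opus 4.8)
The plan is to obtain the splitting from the primary decomposition of $A$ at the eigenvalue $0$, and then to verify that the splitting subspaces are mutually $H$-orthogonal so that $H$ block-diagonalizes as well. First I would write $\mathbb{R}^n = \mathcal{R} \oplus \mathcal{N}$, where $\mathcal{N} = \ker A^n$ is the generalized eigenspace of $A$ at $0$ and $\mathcal{R} = \ran A^n$; by standard linear algebra (Fitting's lemma) both subspaces are $A$-invariant, $A$ restricted to $\mathcal{R}$ is nonsingular, and $A$ restricted to $\mathcal{N}$ is nilpotent. Choosing any basis adapted to this decomposition and letting $T$ be the change-of-basis matrix already gives $T^{-1}AT = \diag(A_1, A_0)$ with $A_1$ nonsingular and $A_0$ nilpotent; the only remaining issue is the shape of $T^*HT$.

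The key step is to show $\mathcal{R}$ and $\mathcal{N}$ are $H$-orthogonal, i.e. $[x,y]=0$ for $x\in\mathcal{R}$, $y\in\mathcal{N}$, which forces $T^*HT$ to be block diagonal with the blocks $H_1, H_0$ of the correct sizes. Here I would use that $A$ is $H$-selfadjoint: for $x\in\mathcal{R}$ and $y\in\mathcal{N}$ we have, for every $k$, $[A^k x, y] = [x, A^k y]$. Take $k$ equal to the nilpotency index of $A_0$ so that $A^k y = 0$; since $A|_{\mathcal{R}}$ is invertible, $A^k$ maps $\mathcal{R}$ onto $\mathcal{R}$, so every $x'\in\mathcal{R}$ is of the form $A^k x$ with $x\in\mathcal{R}$, and hence $[x', y] = [x, A^k y] = 0$. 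This gives $\mathcal{R} \subseteq \mathcal{N}^{[\perp]}$, and a dimension count (using nonsingularity of $H$, so that $\dim \mathcal{N}^{[\perp]} = n - \dim\mathcal{N} = \dim\mathcal{R}$) yields $\mathcal{R} = \mathcal{N}^{[\perp]}$. In particular both $\mathcal{R}$ and $\mathcal{N}$ are $H$-nondegenerate, so restricting $[\cdot,\cdot]$ to each gives nonsingular symmetric forms $H_1$ on $\mathcal{R}$ and $H_0$ on $\mathcal{N}$.

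Finally I would assemble $T$ from a basis of $\mathcal{R}$ followed by a basis of $\mathcal{N}$: the relation $[x,y] = (Tx)^* H (Ty)$ together with $A$-invariance of the two subspaces and the $H$-orthogonality just established shows $T^{-1}AT = \diag(A_1,A_0)$ and $T^*HT = \diag(H_1,H_0)$, with $A_1, H_1$ of the common size $\dim\mathcal{R}$, $A_1$ nonsingular and $A_0$ nilpotent, as required. I expect the main obstacle to be the verification that the two generalized-eigenspace components are $H$-orthogonal — more precisely, being careful that one really can hit all of $\mathcal{R}$ by applying a high power of $A$ (which needs the invertibility of $A|_{\mathcal{R}}$, not just $A$-invariance) and that the dimension count closing $\mathcal{R} = \mathcal{N}^{[\perp]}$ uses the nonsingularity of $H$; everything else is bookkeeping about changes of basis.
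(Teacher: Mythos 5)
Your argument is correct, but it takes a different route from the paper. The paper first brings $A$ into real Jordan form via $T$, so that $T^{-1}AT = \diag(A_1,A_0)$ with $A_1$ nonsingular and $A_0$ nilpotent, writes $T^*HT$ with an unknown off-diagonal block $H_2$, and then uses $H$-selfadjointness blockwise to get the Sylvester equation $A_1^*H_2 = H_2A_0$; since $A_1^*$ and $A_0$ have disjoint spectra, Theorem \ref{theo:sylv_eq} forces $H_2 = 0$. You instead work coordinate-free with the Fitting decomposition $\mathbb{R}^n = \ran A^n \oplus \ker A^n$ and show directly that the two $A$-invariant summands are $H$-orthogonal, by writing any $x'\in\ran A^n$ as $A^kx$ with $x$ in that subspace (this is where invertibility of $A\vert_{\ran A^n}$ is genuinely used) and moving $A^k$ across the form, $[A^kx,y]=[x,A^ky]=0$; block-diagonality of $T^*HT$ then follows at once, and your dimension count $\ran A^n = (\ker A^n)^{[\perp]}$, while not strictly needed for the statement, gives the nondegeneracy of the restricted forms for free. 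Your approach is more elementary in that it avoids both the Jordan form and the Sylvester equation theorem; the paper's approach is shorter given that Theorem \ref{theo:sylv_eq} is already available, and the same Sylvester trick is the standard tool for such block-cleaning arguments, but in substance your orthogonality computation is exactly what the vanishing of $H_2$ expresses. One small caution: state explicitly that $k$ is chosen at least as large as the nilpotency index of $A\vert_{\ker A^n}$ (e.g.\ $k=n$ always works), and note that in the real bilinear setting no conjugation issues arise when moving $A^k$ across $[\cdot,\cdot]$.
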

\begin{proof}
Let $T$ be such that
\[
T^{-1}AT = \begin{bmatrix}
A_1 & 0\\
0 & A_0
\end{bmatrix}
\]
is in real Jordan form with $A_1$ nonsingular and $A_0$ nilpotent. Furthermore, let
\[
\begin{bmatrix}
H_1 & H_2\\
H_2^* & H_0
\end{bmatrix} := T^*HT
\]
such that $H_1$ has the same size as $A_1$. Since $A = A^{[*]} = H^{-1}A^*H$, we obtain
\[
\begin{bmatrix}
A_1^*H_1 & A_1^*H_2\\
A_0^*H_2^* & A_0^*H_0
\end{bmatrix} = T^*A^*HT = T^*HAT = \begin{bmatrix}
H_1H_1 & H_2A_0\\
H_2^*H_1 & H_0A_0
\end{bmatrix}.
\]
This yields $A_1^*H_2 = H_2A_0$, and since $A_0$ is nilpoltent and $A_1$ is nonsingular, Theorem \ref{theo:sylv_eq} implies $H_2 = 0$.
\end{proof}

\begin{rem}\normalfont
Consider $X\in\mathbb{R}^{n,n}$. The previous lemma implies the existence of a nonsingular matrix $T\in\mathbb{R}^{n,n}$ such that
\[
T^{-1}X^{[*]}XT = \begin{bmatrix}
B_1 & 0\\0&B_0
\end{bmatrix}\quad\text{ and }\quad T^*HT = \begin{bmatrix}
H_1 & 0\\0 & H_0
\end{bmatrix},
\]
where $B_1$ and $H_1$, $B_1$ is nonsingular and $B_0$ is nilpotent. Setting $Y := T^{-1}XT$ and $H':= T^*HT$, we then get
\[
Y^{[*]_{H'}}Y = \begin{bmatrix}
B_1 & 0\\0 & B_0
\end{bmatrix}.
\]
Furthermore, we show in Section \ref{ssec:indecomp} that $X$ admits an $H$-polar decomposition with commuting factors if and only if $Y$ admits an $H'$-polar decomposition with commuting factors. 
\end{rem}

\begin{proof}[Proof of Theorem \ref{theo:comm_fac_real_case}]
Let $X\in\mathbb{R}^{n,n}$ be $H$-normal such that $X^{[*]}X$ has no negative eigenvalues, if $0$ is an eigenvalue of $X$ then it is semisimple and $\ker(X^{[*]}X) = \ker X$. From the previous remark, we can assume that
\[
X^{[*]}X = \begin{bmatrix}
B_1 & 0\\0 & B_0
\end{bmatrix}\quad\text{ and }\quad H = \begin{bmatrix}
H_1 & 0\\0 & H_0
\end{bmatrix},
\]
where $B_1$ and $H_1$ have the same size, $B_1$ is nonsingular and $B_0$ is nilpotent. Since $0$ is (at most) a semisimple eigenvalue of $X^{[*]}X$, we have $B_0 = 0$. The kernel equality $\ker(X^{[*]}X) = \ker X$ then yields
\[
X = \begin{bmatrix}
X_1 & 0\\X_2 & 0
\end{bmatrix},
\]
where $X_1$ has the same size as $B_1$. Furthermore, the matrix $\begin{bmatrix}
X_1\\X_2
\end{bmatrix}$ has full rank. Since $B_1$ has no nonpositive eigenvalue, we can define the nonsingular matrix $A_1 := B_1^{1/2}$. Since $X^{[*]}X$ is $H$-self-adjoint, $B_1$ is necessarily $H_1$-selfadjoint and thus
\[
A_1^{[*]_{H_1}} = H_1^{-1}\left(B_1^{1/2}\right)^*H_1 = H_1^{-1} \left(B_1^*\right)^{1/2}H_1 = \left(H_1^{-1}B_1^*H_1\right)^{1/2} = B_1^{1/2}.
\]
We conclude that $A_1$ is also $H_1$-selfadjoint. Define
\[
A := \begin{bmatrix}
A_1 &0\\0&0
\end{bmatrix}\quad\text{ and }\quad U_0 := \begin{bmatrix}
X_1A_1^{-1} & 0\\
X_2A_1^{-1} & 0
\end{bmatrix}.
\]
Apparently, $A$ is $H$-selfadjoint and $A^2 = X^{[*]}X$. Furthermore, $X = U_0A$ and
\[
[U_0Ax, U_0Ay] = [Xx, Xy] = [x, X^{[*]}Xy] = [x,A^2y] = [Ax, Ay]
\]
for all $x,y\in\mathbb{R}^n$. We conclude that $U_0:\ran A\rightarrow\ran X$ is an isomorphism preserving the indefinite inner product. Witt's Theorem \ref{theo:witt} then implies the existence of an $H$-unitary $U\in\mathbb{R}^{n,n}$ such that $Ux = U_0x$ for all $x\in\ran A$. Thus, $X = UA$ is an $H$-polar decomposition.

It remains to show that $X = UA$ has commuting factors. Since $X$ is $H$-normal, we have
\[
\begin{bmatrix}
H_1X_1^*H_1X_1 & 0\\0&0
\end{bmatrix} = \begin{bmatrix}
B_1 &0\\0&0
\end{bmatrix} = X^{[*]}X = XX^{[*]} = \begin{bmatrix}
X_1H_1^{-1}X_1^*H_1 & X_1H_1^{-1}X_2^*H_0\\
X_2H_1^{-1}X_1^*H_1 & X_2H_1^{-1}X_2^*H_0
\end{bmatrix}.
\]
This gives
\[
X_2H_1^{-1}\times [X_1^*, X_2^*] = 0
\]
and thus $X_2 = 0$, since $[X_1^*, X_2^*]$ has full rank and $H_1$ and $H_2$ are nonsingular. Furthermore,
\[
H_1^{-1}X_1^*H_1X_1 = B_1 = X_1H_1^{-1}X_1^*H_1
\]
and thus
\[
X_1B_1 = X_1H_1^{-1}X_1^*H_1X_1 = B_1 X_1.
\]
Since $A_1$ is a matrix function of $B_1$, this yields $X_1A_1 = A_1X_1$. Moreover,
\[
\begin{bmatrix}
X_1 & 0\\0 & 0
\end{bmatrix} = X = UA = U\begin{bmatrix}
A_1 &0\\0 & 0
\end{bmatrix}
\]
and thus
\[
U = \begin{bmatrix}
X_1A^{-1} & U_1\\0 & U_2
\end{bmatrix}.
\]
As $X_1$ and $A_1$ are nonsingular, the fact that $U$ is $H$-unitary implies $U_1 = 0$ and thus
\[
AU = \begin{bmatrix}
A_1X_1A_1^{-1} & 0\\0 & 0
\end{bmatrix} = \begin{bmatrix}
X_1A_1A_1^{-1} &0\\0&0
\end{bmatrix} = X.
\]
\end{proof}

Higham et al. prove in \cite{hmt} that the $H$-selfadjoint factor in the constructed $H$-polar decomposition is uniquely determined by the property of having all its eigenvalues in the open right half plane (with 0). If $X$ is nonsingular, then the $H$-unitary factor is also uniquely determined. This special $H$-polar decomposition is called \emph{generalised polar decomposition}.

The necessary and sufficient conditions for the existence of a canonical generalised polar decomposition is given by the three latter conditions in Theorem \ref{theo:comm_fac_real_case}: The matrix $X$ has to be such that $X^{[*]}X$ has no negative eigenvalues, if $0$ is an eigenvalue then it has to be simple, and $\ker (X^{[*]}X) = \ker X$. This is also true in the case $\mathbb{F} = \mathbb{C}$ (the proof is completely analogous).

The generalised polar decomposition can be approximated numerically by different recursive formulas. This implies that we can approximate a (special) $H$-polar decomposition with commuting factors.

\begin{theo}[cf. \cite{hmt}]\label{theo:approx}
Let $X\in\mathbb{F}^{n,n}$ be nonsingular and $H$-normal. If $X^{[*]}X$ has no negative eigenvalue, then the iteration defined by
\[
U_0 := X\quad\text{ and }\quad U_{k+1} := 2U_k(I_n + U_k^{[*]}U_k)^{-1}
\]
converges quadratically to the $H$-unitary factor of the generalised polar decomposition which has commuting factors.
\end{theo}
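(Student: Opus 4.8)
The plan is to recognise the iteration as a disguised form of the scalar recursion $t\mapsto\phi(t):=2t/(1+t^2)$ acting on the $H$-selfadjoint factor of the generalised polar decomposition of $X$, and then to analyse $\phi$ by means of a Cayley transform. First I would produce the target matrix. Since $X$ is nonsingular and $X^{[*]}X$ has no negative eigenvalue, the $H$-selfadjoint matrix $B:=X^{[*]}X$ satisfies $\Lambda(B)\cap(-\infty,0]=\emptyset$, so its principal square root $A:=B^{1/2}$ is well defined and, by the computation already used in the proof of Theorem~\ref{theo:comm_fac_real_case} (namely $H^{-1}(B^{1/2})^*H=(H^{-1}B^*H)^{1/2}=B^{1/2}$), is $H$-selfadjoint; moreover $\Lambda(A)$ lies in the open right half-plane. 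Setting $U:=XA^{-1}$ one gets $U^{[*]}U=A^{-1}X^{[*]}XA^{-1}=A^{-1}BA^{-1}=I_n$, so $U$ is $H$-unitary, and $XX^{[*]}=UA^2U^{[*]}$ together with the $H$-normality of $X$ forces $UA^2=A^2U$, hence $UA=AU$ (as $A$ is a function of $A^2$). Thus $X=UA$ is the generalised polar decomposition with commuting factors; $X$ being nonsingular, $U$ and $A$ are uniquely determined, so this $U$ is exactly the limit we must reach.

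Next I would rewrite the iteration in terms of $A$. Define $A_0:=A$ and $A_{k+1}:=2A_k(I_n+A_k^2)^{-1}$, and prove by induction that for every $k$ the matrix $A_k$ is a rational function of $A$ (hence commutes with $A$ and with $U$) and is $H$-selfadjoint, that $I_n+A_k^2$ is invertible, and that $U_k=UA_k$. The step is short: $U_k^{[*]}U_k=A_k^{[*]}U^{[*]}UA_k=A_k^2$, so $U_{k+1}=2U_k(I_n+U_k^{[*]}U_k)^{-1}=2UA_k(I_n+A_k^2)^{-1}=UA_{k+1}$, and $H$-selfadjointness of $A_{k+1}$ follows from that of $A_k$ and the commutativity of $A_k$ with $(I_n+A_k^2)^{-1}$; invertibility of $I_n+A_{k+1}^2$ reduces, since $A_{k+1}=\phi(A_k)$ and $\Lambda(A_{k+1})=\phi(\Lambda(A_k))$, to the fact that $\phi$ maps the open right half-plane into itself (established below), because $\pm i$ are not in that half-plane.

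The heart of the argument is the Cayley transform. Put $Z_k:=(I_n-A_k)(I_n+A_k)^{-1}$, which makes sense because $-1\notin\Lambda(A_k)$ for the same reason as above. A direct computation gives $I_n-A_{k+1}=(I_n-A_k)^2(I_n+A_k^2)^{-1}$ and $I_n+A_{k+1}=(I_n+A_k)^2(I_n+A_k^2)^{-1}$, whence $Z_{k+1}=Z_k^2$ and therefore $Z_k=Z_0^{\,2^k}$. The Möbius map $t\mapsto(1-t)/(1+t)$ sends the open right half-plane bijectively onto the open unit disk and conjugates $\phi$ to $w\mapsto w^2$; in particular $\phi$ preserves the open right half-plane (justifying the claims above) and $\Lambda(Z_0)=\{(1-\mu^{1/2})/(1+\mu^{1/2}):\mu\in\Lambda(B)\}$ lies in the open unit disk, so $\rho(Z_0)<1$ and $Z_k=Z_0^{\,2^k}\to0$. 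Consequently $A_k=(I_n-Z_k)(I_n+Z_k)^{-1}\to I_n$ and $U_k=UA_k\to U$. For the rate, $\|A_k-I_n\|=2\|Z_k(I_n+Z_k)^{-1}\|$ together with $\|Z_{k+1}\|\le\|Z_k\|^2$ yields $\|U_{k+1}-U\|\le c\,\|U_k-U\|^2$ for all large $k$, i.e.\ quadratic convergence.

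The main obstacles are that the iteration must stay well defined (no $I_n+U_k^{[*]}U_k$ singular) and that the scalar convergence of the $\phi$-iterates must be lifted to the matrix level even though an $H$-selfadjoint matrix need not be diagonalisable. Both are dissolved by the same mechanism: the identity $U_k=UA_k$ with $U_k^{[*]}U_k=A_k^2$ turns everything into a question about the single matrix $A$, and the exact matrix identity $Z_{k+1}=Z_k^2$ --- which requires no diagonalisability --- reduces the convergence to the elementary fact that powers of a matrix of spectral radius below $1$ tend to $0$, the doubly-exponential decay $\rho(Z_0)^{2^k}$ swamping the polynomial growth coming from any nontrivial Jordan blocks of $Z_0$. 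A minor point still to be checked is the identification of $U$ with the $H$-unitary factor singled out in Theorem~\ref{theo:comm_fac_real_case} and its uniqueness for nonsingular $X$, which I indicated above.
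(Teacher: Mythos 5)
Your argument is correct, and it is worth noting that the paper itself offers no proof of this statement: Theorem \ref{theo:approx} is quoted with a ``cf.~\cite{hmt}'' and the analysis is delegated entirely to Higham, Mehl and Tisseur, so there is no in-paper route to compare against. Your proof is a sound self-contained version of the standard argument for such iterations. The key reductions all check out: $A:=(X^{[*]}X)^{1/2}$ is $H$-selfadjoint with spectrum in the open right half-plane, $U:=XA^{-1}$ is $H$-unitary, $H$-normality gives $UA^2=A^2U$ and hence $UA=AU$ since $A$ is a function of $A^2$; the induction $U_k=UA_k$ with $U_k^{[*]}U_k=A_k^2$ is immediate, and the Cayley conjugation $Z_{k+1}=Z_k^2$ (legitimate because all matrices involved are rational functions of $A$ and commute, with $I_n+A_k^2$ and $I_n+A_k$ invertible thanks to the spectral inclusion $\Lambda(A_k)\subseteq\{\Re>0\}$ preserved by $\phi$) reduces convergence to $\rho(Z_0)<1$, which handles nondiagonalisable $A$ correctly. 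Two small points you flagged yourself and should spell out if you write this up: the identification of your $U$ with \emph{the} $H$-unitary factor rests on the uniqueness statement for nonsingular $X$ (quoted in the paper from \cite{hmt}, or provable directly from uniqueness of the $H$-selfadjoint square root with spectrum in the open right half-plane); and the quadratic-rate estimate needs the reverse bound $\|Z_k\|\le\|(I_n+A_k)^{-1}\|\,\|A_k-I_n\|\le C\,\|U^{-1}\|\,\|U_k-U\|$ for large $k$ (available since $A_k\to I_n$), not only $\|A_k-I_n\|=2\|Z_k(I_n+Z_k)^{-1}\|$. With those details filled in, the proof is complete.
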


\subsection{Witt extensions and families of polar decompositions with commuting factors}\label{sec:witt_extensions}

Lemma \ref{lem:equiv_AUUA_XUUX} shows that an $H$-polar decomposition $X = UA$ has commuting factors if and only if $X$ and $U$ commute. This equivalence does not hold for the pair $X$ and $A$. In fact, we have in general the implications
\[
UX = XU \quad\Leftrightarrow\quad AU = UA \quad \begin{array}{ll}
\Rightarrow\\
\centernot\Leftarrow
\end{array}\quad AX = XA.
\]
The implication $AU = UA \Rightarrow AX = XA$ follows directly from
\[
AX = AUA = UAA = XA
\]
and the following two examples from \cite{mrr} show that the equivalence does not hold in general. Note that the first example illustrates the fact that $H$-polar decompositions with commuting factors do coexist with $H$-polar decompositions satisfying the weaker condition $AX = XA$, whereas the second shows that there exists $H$-polar decompositions with $AX = XA$ for matrices which do not admit any $H$-polar decomposition with commuting factors. 

\begin{ex}\label{ex:semi_tot}\normalfont
Consider
\[
X = \begin{bmatrix}
0 & 0 & 1 & 0 & 0\\
0 & 0 & 0 & 1 & 1\\
0 & 0 & 0 & 1 & 0\\
0 & 0 & 0 & 0 & 0\\
0 & 0 & 0 & 0 & 0
\end{bmatrix}\quad\text{ and }\quad H = \begin{bmatrix}
0 & 0 & 0 & 1 & 0\\
0 & 0 & 0 & 0 & 1\\
0 & 0 & 1 & 0 & 0\\
1 & 0 & 0 & 0 & 0\\
0 & 1 & 0 & 0 & 0
\end{bmatrix}.
\]
Then $X$ is $H$-normal and $\ker X = \ker X^{[*]}$ and thus there exists an $H$-polar decomposition with commuting factors. However, $X = UA$ with
\[
U = \begin{bmatrix}
1 & -\frac{1}{2} & \frac{1}{4} & -\frac{1}{32} & 0\\[7pt]
0 & 1 & \frac{1}{2} & -\frac{3}{16} & -\frac{1}{8}\\[7pt]
0 & 0 & 1 & -\frac{1}{2} & -\frac{1}{2}\\[7pt]
0 & 0 & 0 & 1 & 0\\[7pt]
0 & 0 & 0 & \frac{1}{2} & 1
\end{bmatrix},\quad A = \begin{bmatrix}
0 & 0 & 1 & 0 & \frac{1}{2}\\[2pt]
0 & 0 & 0 & \frac{1}{2} & 1\\[2pt]
0 & 0 & 0 & 1 & 0\\[2pt]
0 & 0 & 0 & 0 & 0\\[2pt]
0 & 0 & 0 & 0 & 0
\end{bmatrix}
\]
is an $H$-polar decomposition with $AX = XA$ and $XU \neq UX$.
\end{ex}

\begin{ex}\normalfont
Consider the setting of \Href{5.III} from the appendix:
\[
H = \begin{bmatrix}
0 & 0 & 1 & 0 & 0\\
0 & 0 & 0 & 1 & 0\\
0 & 0 & 0 & z & 0\\
0 & 0 & 0 & 0 & 0\\
0 & 0 & 0 & 0 & 0
\end{bmatrix}\quad\text{ and }\quad H = \begin{bmatrix}
0 & 0 & 0 & 1 & 0 \\
0 & 0 & 0 & 0 & 1\\
0 & 0 & 1 & 0 & 0\\
1 & 0 & 0 & 0 & 0\\
0 & 1 & 0 & 0 & 0
\end{bmatrix}
\]
for some $z\in\mathbb{C}$ such that $\vert z\vert = 1$. Then $X$ is $H$-normal with $\ker X \neq \ker X^{[*]}$. Therefore, there cannot exist any $H$-polar decomposition with commuting factors. However, $X = UA$ with
\[
U = \sqrt{z}\begin{bmatrix}
1 & 0 & 0 & 0 & 0\\
0 & -\frac{1}{2}\conj{z} & \conj{z} & 0 & \conj{z}\\
0 & -1 & 1 & 0 & 0\\
0 & 0 & 0 & 1 & 0\\
0 & \conj{z} & 0 & 0 & 0
\end{bmatrix},\quad A = \sqrt{z}\begin{bmatrix}
0 & 0 & \conj{z} & 0 & 0\\
0 & 0 & 0 & 0 &  0\\
0 & 0 & 0 & 1 & 0\\
0 & 0 & 0 & 0 & 0\\
0 & 0 & 0 & 0 & 0
\end{bmatrix}
\]
is an $H$-polar decomposition with $AX = XA$.
\end{ex}

It seems that the information of the commutativity is inscribed in $U$. On the other hand, it has been shown in \cite{mrr} that there exists no $H$-unitary $\widetilde{U}$ in the setting of example \ref{ex:semi_tot} such that $X = \widetilde{U}A$ is an $H$-polar decomposition with commuting factors. In the following, we will give an explanation of this and construct families of $H$-polar decompositions with commuting factors from a given one. The proof will be based on the following result on Witt extensions from \cite[Section 2]{bmrrr1}.

\begin{theo}[Extended Witt's theorem]\label{theo:witt_extend}
Consider the setting of Theorem \ref{theo:witt}. Let then
\[\mathcal{E} = \left( e_1,\dots, e_{m_0}, e_{m_0+1}, \dots, e_{m}, \widetilde{e}_1, \dots, \widetilde{e}_{m_0}, e_{2m_0+m_++m_-+1}, \dots, e_n\right),
\]
$\mathcal{F} = \left( Ue\right)_{e\in\mathcal{E}}$ and the corresponding matrices

\[E = [e_1,\dots,e_m, \widetilde{e}_1,\dots, \widetilde{e}_{m_0}, e_{2m_0+m_++m_-+1},\dots,e_n],
\]
$F = UE\in\mathbb{F}^{n,n}$, where $m = m_++m_-+m_0$, be the bases from the vectors constructed in the proof of Theorem \ref{theo:witt}. Furthermore, let $J_1\in\mathbb{F}^{m_++m_-,m_++m_-}$ and $J_2\in\mathbb{F}^{n-m-m_0,n-m-m_0}$ be such that
\[
G := \begin{bmatrix}
0 & 0 & I_{m_0} & 0\\
0 & J_1 & 0 & 0\\
I_{m_0} & 0 & 0 & 0\\
0 & 0 & 0 & J_2
\end{bmatrix} = E^*HE = F^*H{F}
\]
is the Gramian matrix of $\mathcal{E}$ and $\mathcal{F}$ with respect to $[\cdot,\cdot]$. Then $J_1$ is diagonal such that its first $m_+$ diagonal elements are $+1$ and its remaining $m_-$ diagonal elements are $-1$. Similarly, we can and do assume that $J_2$ is a diagonal matrix for which some diagonal entries are $+1$ and the remaining are $-1$. In the following, $X_{B_1,B_2}$ denotes the representation of the matrix $X\in\mathbb{F}^{n,n}$ in the bases $B_1$ and $B_2$.

Then $\widetilde{U}$ is another $H$-unitary extension of $U_0$ \iff
\begin{equation}\label{eq:witt_ext}
\widetilde{U}_{\mathcal{E},\mathcal{F}} = \begin{bmatrix}
I_{m_0} & 0 & -\frac{1}{2}P_2^*J_2P_2 + P_3 & -P_2^*J_2P_1\\
0 & I_{m-m_0} & 0 & 0\\
0 & 0 & I_{m_0} & 0\\
0 & 0 & P_2 & P_1
\end{bmatrix},
\end{equation}
where $P_1\in\mathbb{F}^{n-m-m_0,n-m-m_0}$ is $J_2$-unitary, $P_2\in\mathbb{F}^{n-m-m_0,m_0}$ and where $P_3\in\mathbb{F}^{m_0,m_0}$ is skew-adjoint.
\end{theo}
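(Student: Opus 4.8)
The plan is to work entirely in the two $H$-unitary bases $\mathcal{E}$ and $\mathcal{F}$ of the proof of Theorem~\ref{theo:witt}, so that an $H$-unitary extension $\widetilde{U}$ of $U_0$ becomes a concrete block matrix $\widetilde{U}_{\mathcal{E},\mathcal{F}}$ whose only freedom lies in how it acts on the extension directions. First I would recall precisely which vectors in $\mathcal{E}$ span which subspaces: the original basis $(e_1,\dots,e_m)$ of $V$, the companion neutral vectors $\widetilde{e}_1,\dots,\widetilde{e}_{m_0}$ that pair with $e_1,\dots,e_{m_0}$, and the extending $H$-unitary vectors $e_{2m_0+m_++m_-+1},\dots,e_n$; and likewise for $\mathcal{F}$ via $f=Ue$. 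An extension $\widetilde U$ must satisfy $\widetilde U e = U_0(e)=Ue$ for all $e\in V$, i.e.\ the columns of $\widetilde U_{\mathcal E,\mathcal F}$ indexed by $e_1,\dots,e_m$ must be the standard basis vectors $e_1,\dots,e_m$ of $\mathbb F^n$ (in the $\mathcal F$-coordinates). That already fixes the left $m$ columns — giving the first two block columns $\begin{bmatrix}I_{m_0}\\0\\0\\0\end{bmatrix}$ and $\begin{bmatrix}0\\I_{m-m_0}\\0\\0\end{bmatrix}$ — and reduces the problem to parametrising the images of $\widetilde e_1,\dots,\widetilde e_{m_0}$ and of the extending vectors.

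Next I would impose that $\widetilde U$ is $H$-unitary, which in the chosen bases is the single matrix identity $\widetilde U_{\mathcal E,\mathcal F}^{*}\,G\,\widetilde U_{\mathcal E,\mathcal F}=G$, since $G=E^*HE=F^*HF$ is the common Gramian. Writing the unknown right two block columns as $\begin{bmatrix}Q_1\\Q_2\\Q_3\\Q_4\end{bmatrix}$ and $\begin{bmatrix}R_1\\R_2\\R_3\\R_4\end{bmatrix}$, the block structure of $G$ (antidiagonal identity blocks coupling the $e_k$-slot with the $\widetilde e_k$-slot, $J_1$ on the $V$-definite slot, $J_2$ on the extension slot) turns $\widetilde U^*G\widetilde U=G$ into a handful of block equations. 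The equations coming from pairing the two fixed left columns with themselves and with the unknown columns force, in turn: $Q_2=R_2=0$ (no $V_{\pm}$-component can appear, because $[e_i,\widetilde U\widetilde e_j]$ must stay $0$ for the definite $e_i$), then $R_3=I_{m_0}$ and $R_1=0$ (the $\widetilde e_j$ must still pair correctly with the $e_i$), then $Q_3=0$ (the images of the extending vectors stay orthogonal to the $e_i$), leaving exactly the $J_2$-block $P_1:=R_4$ to be $J_2$-unitary, a free block $P_2:=Q_4$, a block $Q_1$, and the self-pairing of the $\widetilde e_j$-images to be controlled. The self-pairing equation on the $\widetilde e$-slot reads $Q_1+Q_1^{*}+P_2^{*}J_2P_2=0$ after substituting the already-solved blocks; solving it gives $Q_1=-\tfrac12 P_2^{*}J_2P_2+P_3$ with $P_3$ an arbitrary skew-adjoint matrix, and the cross-pairing between the $\widetilde e$-images and the extension-images gives $R_4^{*}J_2(\text{image of }\widetilde e)$-type identities that are exactly the entry $-P_2^{*}J_2P_1$. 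Reassembling the columns yields the claimed form~\eqref{eq:witt_ext}; conversely, any matrix of that shape is checked to satisfy $\widetilde U^*G\widetilde U=G$ and to fix $V$ pointwise, hence is an $H$-unitary extension of $U_0$.

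The main obstacle I anticipate is purely bookkeeping rather than conceptual: keeping the four-way block partition consistent across the Gramian $G$, the unknown matrix $\widetilde U_{\mathcal E,\mathcal F}$, and the $H$-unitarity identity, and peeling off the block equations in the right order so that each one has already-determined inputs — the vanishing of $Q_2,R_2,R_1,Q_3$ must be extracted \emph{before} the quadratic self-pairing equation for $Q_1$ is solved, otherwise that equation is underdetermined. The one place where a genuine choice enters (and where the structure of the answer is not forced) is the skew-adjoint $P_3$: it records the freedom of replacing a neutral companion vector $\widetilde e_k$ by $\widetilde e_k$ plus an $H$-neutral, $e_i$-orthogonal perturbation, and I would point out explicitly that over $\mathbb F=\mathbb R$ ``skew-adjoint'' means $P_3^{*}=-P_3$ with $P_3$ real, while over $\mathbb C$ it is the Hermitian-skew condition; in both cases the count of free parameters matches the dimension of the space of Witt extensions. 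A final remark: the normalisation choices made in Theorem~\ref{theo:witt} (the $e_k',e_k''$ constructed from $e_k\pm\widetilde e_k$) are what make $G$ have the displayed antidiagonal form, so I would invoke that proof verbatim rather than re-deriving the basis.
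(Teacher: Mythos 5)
Your proposal follows essentially the same route as the paper's proof: the extension property $\widetilde U e_i = f_i$ pins down the first $m$ columns of $\widetilde U_{\mathcal{E},\mathcal{F}}$, and $H$-unitarity reduces to the single Gramian identity $\widetilde U_{\mathcal{E},\mathcal{F}}^{*}\,G\,\widetilde U_{\mathcal{E},\mathcal{F}} = G$, whose block equations are then peeled off in the order you describe, yielding the $J_2$-unitary block $P_1$, the free block $P_2$, the relation $Q_1 + Q_1^{*} + P_2^{*}J_2P_2 = 0$ giving the skew-adjoint freedom $P_3$, and the cross term $-P_2^{*}J_2P_1$. Note only a bookkeeping slip in the middle: with your convention ($Q$ = column of the $\widetilde e_j$-images, $R$ = column of the extension-vector images) the blocks forced by pairing against the fixed columns are $Q_3 = I_{m_0}$ and $R_3 = 0$, not the reverse, and $R_1$ is not $0$ but precisely the entry $-P_2^{*}J_2P_1$ you identify afterwards — your final parametrisation agrees with the theorem.
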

\begin{proof}
Since by definition $\widetilde{U}e_i = f_i$ for $i=1,\dots,m$, we already know that
\[
\widetilde{U}_{\mathcal{E},\mathcal{F}} = \begin{bmatrix}
I_{m_0} & 0 & U_1& U_2\\
0 & I_{m-m_0} & U_3 & U_4\\
0 & 0 & U_5 & U_6\\
0 & 0 & U_7 & U_8
\end{bmatrix}
\]
for some matrices $U_i$, $i=1,\dots,6$ with sizes that correspond to the block partition in \eqref{eq:witt_ext}. The remaining equalities follow from the necessary and sufficient condition
\[
I_n = H^{-1}\widetilde{U}^*H\widetilde{U} = {E}G^{-1}\widetilde{U}_{\mathcal{E},\mathcal{F}}^*G\widetilde{U}_{\mathcal{E},\mathcal{F}}{E}^{-1}
\]
for $\widetilde{U}$ to be $H$-unitary. First, it gives
\[
I_n = G^{-1}\widetilde{U}_{\mathcal{E},\mathcal{F}}^*G\widetilde{U}_{\mathcal{E},\mathcal{F}} = G\widetilde{U}_{\mathcal{E},\mathcal{F}}^*G\widetilde{U}_{\mathcal{E},\mathcal{F}}
\]
and hence
\[
\begin{bmatrix}
U_5^* & U_3^*J_1 & u_{13} & u_{14}\\
0 & I_{m_++m_-} & U_3 & U_4\\
0 & 0 & U_5 & U_6\\
J_2U_6^* & J_2U_4^*J_1 & u_{43} & u_{44}
\end{bmatrix} = \begin{bmatrix}
I_{m_0} & 0 & 0 & 0\\
0 & I_{m_++m_-} & 0 & 0\\
0 & 0 & I_{m_0} & 0\\
0 & 0 & 0 & I_{n-m-m_0}
\end{bmatrix},
\]
where
\begin{align*}
u_{13} &= U_5^*U_1 + U_3^*J_1U_3 + U_1^*U_5 + U_7^*J_2U_7,\\
u_{14} &= U_5^*U_2 + U_3^*J_1U_4 + U_1^*U_6 + U_7^*J_2U_8,\\
u_{43} &= J_2U_6^*U_1 + J_2U_4^*J_1U_3 + J_2U_2^*U_5 + J_2U_8^*J_2U_7\\
u_{44} &= J_2U_6^*U_2 + J_2U_4^*J_1U_4 + J_2U_2^*U_6 + J_2U_8^*J_2U_8.
\end{align*}
The theorem is then derived by equating the blocks.
\end{proof}

\begin{prop}
Let $X\in\mathbb{F}^{n,n}$ and let $X = UA$ be an $H$-polar decomposition with commuting factors. If $X = \widetilde{U}A$ is another $H$-polar decomposition of $X$, then $\widetilde{U}A = A\widetilde{U}$.
\end{prop}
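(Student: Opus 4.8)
The plan is to reduce the claim to a statement about the ``difference'' of the two $H$-unitary factors. Since $U$ and $\widetilde U$ are $H$-unitary, they are invertible, and $W := U^{-1}\widetilde U$ is again $H$-unitary (as a product of $H$-unitary matrices). From $X = UA = \widetilde U A$ we get $UAx = \widetilde U A x$ for every $x$, so $U$ and $\widetilde U$ agree on $\ran A$; equivalently, $W$ fixes $\ran A$ pointwise. Writing $\widetilde U = UW$ and using the hypothesis $UA = AU$, the desired identity $\widetilde U A = A\widetilde U$ is equivalent to $WA = AW$, so it suffices to prove the latter.

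One half is immediate: $WA = A$, since for any $x$ the vector $Ax$ lies in $\ran A$, hence $WAx = Ax$. The nontrivial half is $AW = A$, i.e.\ $\ran(W - I_n)\subseteq\ker A$. Here the key structural fact is that, since $A$ is $H$-selfadjoint, $\ker A$ coincides with the $H$-unitary complement $(\ran A)^{[\perp]}$: indeed $HA = A^*H$ gives $\ker A^* = H\ker A$, and by the subspace lemma $(\ran A)^{[\perp]} = H^{-1}(\ran A)^{\perp} = H^{-1}\ker A^* = \ker A$. Now for arbitrary $x\in\mathbb{F}^n$ and $v\in\ran A$ we have $Wv = v$, so, $W$ being $H$-unitary,
\[
[Wx - x, v] = [Wx, Wv] - [x, v] = [x,v] - [x,v] = 0 .
\]
Thus $(W - I_n)x\in(\ran A)^{[\perp]} = \ker A$, which gives $AWx = Ax$ for all $x$, i.e.\ $AW = A$. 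Combining $WA = A$ and $AW = A$ yields $WA = AW$, and then $A\widetilde U = AUW = UAW = U(AW) = UA = X = \widetilde U A$, as asserted.

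The computations involved are all short, so I do not expect a genuine obstacle; the one point worth isolating carefully is the identity $\ker A = (\ran A)^{[\perp]}$ for $H$-selfadjoint $A$, which is precisely what makes the $H$-unitarity of $W$ bite (in a Euclidean space this would be the familiar $\ker A^* = (\ran A)^{\perp}$). An alternative route would be to invoke the Extended Witt theorem (Theorem \ref{theo:witt_extend}) to parametrise every admissible $\widetilde U$ explicitly and then verify commutativity with $A$ block by block, but this is considerably more laborious and unnecessary for this particular statement; I would reserve that machinery for the finer questions about \emph{families} of such decompositions that follow.
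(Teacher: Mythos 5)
Your proof is correct, and it takes a genuinely different route from the paper. The paper works inside the Extended Witt machinery: it assumes (without loss of generality) that $U$ is the particular extension constructed in the proof of Witt's theorem, passes to the adapted bases $\mathcal{E},\mathcal{F}$, writes $\widetilde U = UM$ with $M_{\mathcal{E},\mathcal{E}}$ in the block form of the Extended Witt theorem, and verifies $M_{\mathcal{E},\mathcal{E}}A_{\mathcal{E},\mathcal{E}} = A_{\mathcal{E},\mathcal{E}} = A_{\mathcal{E},\mathcal{E}}M_{\mathcal{E},\mathcal{E}}$ by block bookkeeping. You instead work basis-free with $W := U^{-1}\widetilde U$: the only inputs are that $W$ is $H$-unitary, that $W$ fixes $\ran A$ pointwise (which gives $WA = A$), and the identity $\ker A = (\ran A)^{[\perp]}$ for $H$-selfadjoint $A$, which you correctly reduce to item 4 of the paper's lemma on $H$-unitary complements via $\ker A^{*} = H\ker A$; the $H$-unitarity of $W$ then forces $\ran(W - I_n)\subseteq(\ran A)^{[\perp]} = \ker A$, i.e.\ $AW = A$, and the conclusion follows. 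What each approach buys: yours is shorter, self-contained, and avoids the paper's somewhat delicate \textquotedblleft assume $U$ is the Witt extension\textquotedblright{} reduction and the coordinate representation $X_{B_1,B_2}$ altogether; the paper's approach, while heavier here, produces the explicit parametrisation of \emph{all} admissible $\widetilde U$, which is what the surrounding section actually needs for constructing families of decompositions — exactly the trade-off you identify in your closing remark.
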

\begin{proof}
Assume without loss of generality that $U$ is the extension from the proof of Theorem \ref{theo:witt} and let $\mathcal{E}$ and $\mathcal{F}$ be as before. (If $U$ is another Witt extension, then it can be constructed analogously as in the proof with corresponding bases $\mathcal{E}$ and $\mathcal{F}$.) By definition, $(e_1,\dots,e_m)$ spans $\ran A$ and
\[
H^{-1}A^*H = A\quad\text{ or }\quad A_{\mathcal{E},\mathcal{E}}^*G = GA_{\mathcal{E},\mathcal{E}}.
\] 
This gives
\[ 
A_{\mathcal{E},\mathcal{E}} = \begin{bmatrix}
0 & A_1 & A_2 & 0\\
0 & A_3 & A_4 & 0\\
0 & 0 & 0 & 0\\
0 & 0 & 0 & 0
\end{bmatrix}
\]
for some matrices $A_i$, $i=1,\dots,4$, with sizes that correspond to the block partition in \eqref{eq:witt_ext}. Moreover, $U_{\mathcal{E},\mathcal{F}} = I_n$ and hence $\widetilde{U} = UM$ for some matrix $M$ with $M_{\mathcal{E},\mathcal{E}}$ in the form of \eqref{eq:witt_ext}. This gives
\[
M_{\mathcal{E},\mathcal{E}}A_{\mathcal{E},\mathcal{E}} = A_{\mathcal{E},\mathcal{E}} = A_{\mathcal{E},\mathcal{E}}M_{\mathcal{E},\mathcal{E}}.
\]
We conclude that
\begin{align*}
\left[\widetilde{U}A\right]_{\mathcal{E},\mathcal{F}} &= U_{\mathcal{E},\mathcal{F}} M_{\mathcal{E},\mathcal{E}}A_{\mathcal{E}, \mathcal{E}}\\
&= U_{\mathcal{E},\mathcal{F}} A_{\mathcal{E},\mathcal{E}} M_{\mathcal{E},\mathcal{E}}\\
&= [UA]_{\mathcal{E},\mathcal{F}} M_{\mathcal{E},\mathcal{E}} \\
&= [AU]_{\mathcal{E},\mathcal{F}}M_{\mathcal{E},\mathcal{E}} \\
&= A_{\mathcal{F}, \mathcal{F}} U_{\mathcal{E},\mathcal{F}} M_{\mathcal{E},\mathcal{E}} = \left[A\widetilde{U}\right]_{\mathcal{E},\mathcal{F}}
\end{align*}
and thus $\widetilde{U}A = A\widetilde{U}$.
\end{proof}

\subsection{Indecomposable matrices and number of polar decompositions with commuting factors}\label{ssec:indecomp}\label{ssec:indecomposables}

Since our definition of $H$-polar decompositions is not restrictive, we do not obtain any uniqueness. However, the additional restriction of commuting factors reduces the number of $H$-polar decomposition drastically. It is shown in this thesis that the selfadjoint factor in the decomposition of Example \ref{ex:semi_tot} is unique up to a sign (cf. Appendix \ref{sec*:Appendix}).

In the following, we will discuss possible ways to determine the number of $H$-polar decompositions with commuting factors.\\

Since we do not want to take into account the number of Witt extensions of every feasible matrix $A$ (this has already be done in \cite[Section 2]{bmrrr1}), we will use the previous result to classify $H$-polar decompositions.
\begin{defin}
Let $X\in\mathbb{F}^{n,n}$. We denote by $CPD(X,H) := CPD_\mathbb{F}(X,H)$ the cardinal of matrices $A\in\mathbb{F}^{n,n}$ such that there exists an $H$-polar decomposition $ X= UA$ with commuting factors.
\end{defin}

\begin{rem}\normalfont
The number $CPD(X,H)$ is the number of equivalence classes with respect to
\[
(U,A) \sim (V, B)\quad:\Leftrightarrow\quad A = B
\]
on the set $\llbrace (U,A)\;\vert\; X = UA \text{ is $H$-polar decomposition with commuting factors}\rrbrace$.
\end{rem}

\begin{defin}[Unitary equivalence]
Let $N_1,N_2,H_1,H_2\in\mathbb{F}^{n,n}$ such that $H_1$ and $H_2$ are Hermitian nonsingular. If there exists a nonsingular $T\in\mathbb{F}^{n,n}$ such that
\[
N_2 = T^{-1}N_1T\quad\text{ and }\quad H_2 = T^*H_1T,
\]
then $(N_1,H_1)$ and $(N_2,H_2)$ are called \emph{unitarily equivalent}.
\end{defin}

Unitary equivalence is the natural equivalence relation on indefinite inner product spaces: It is immediate that $N_1$ is $H_1$-selfadjoint, $H_1$-unitary or $H_1$-normal if and only if $N_2$ is $H_2$-selfadjoint, $H_2$-unitary or $H_2$-normal.

\begin{prop}
Let $X_1,X_2,H_1,H_2\in\mathbb{F}^{n,n}$ be such that $H_1$ and $H_2$ are Hermitian nonsingular. Furthermore, let $T\in\mathbb{F}^{n,n}$ be nonsingular such that $X_2 = T^{-1}X_1T$ and $H_2 = T^*H_1T$. Then $X_1 = U_1A_1$ is an $H_1$-polar decomposition (with commuting factors) of $X_1$ \iff $X_2 = U_2A_2$ with $U_2 = T^{-1}U_1T$, $A_2 = T^{-1}A_1T$ is an $H_2$-polar decomposition (with commuting factors).
\end{prop}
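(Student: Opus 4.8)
The plan is to verify directly that the prescribed matrices $U_2 = T^{-1}U_1T$ and $A_2 = T^{-1}A_1T$ inherit all the structural properties, using the reformulations recorded in the remark after the definition of the $H$-adjoint: $A$ is $H$-selfadjoint if and only if $HA = A^*H$, and $U$ is $H$-unitary if and only if $H = U^*HU$. Since all the identities involved are reversible and $T$ is nonsingular, it suffices to establish the implication from left to right; the converse follows by interchanging the roles of the indices $1$ and $2$ and replacing $T$ by $T^{-1}$, noting that $(T^{-1})^*H_2T^{-1} = H_1$ and $TX_2T^{-1} = X_1$, so that the hypotheses of the proposition are again met with these new data.

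First I would check the product identity, which is immediate: $U_2A_2 = (T^{-1}U_1T)(T^{-1}A_1T) = T^{-1}(U_1A_1)T = T^{-1}X_1T = X_2$, so $X_2 = U_2A_2$ is genuinely a factorisation of $X_2$. Next, assuming $A_1$ is $H_1$-selfadjoint, i.e.\ $H_1A_1 = A_1^*H_1$, I would compute $H_2A_2 = (T^*H_1T)(T^{-1}A_1T) = T^*H_1A_1T = T^*A_1^*H_1T = T^*A_1^*(T^*)^{-1}(T^*H_1T) = (T^{-1}A_1T)^*H_2 = A_2^*H_2$, so $A_2$ is $H_2$-selfadjoint. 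Similarly, assuming $U_1$ is $H_1$-unitary, i.e.\ $H_1 = U_1^*H_1U_1$, I would compute $U_2^*H_2U_2 = (T^*U_1^*(T^*)^{-1})(T^*H_1T)(T^{-1}U_1T) = T^*U_1^*H_1U_1T = T^*H_1T = H_2$, so $U_2$ is $H_2$-unitary. Together with the product identity this shows that $X_2 = U_2A_2$ is an $H_2$-polar decomposition.

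Finally, for the commuting-factors clause I would use that conjugation by $T$ is a ring automorphism of $\mathbb{F}^{n,n}$: since $U_2A_2 = T^{-1}(U_1A_1)T$ and $A_2U_2 = T^{-1}(A_1U_1)T$, the equality $U_2A_2 = A_2U_2$ holds if and only if $U_1A_1 = A_1U_1$. I do not anticipate any real obstacle; the statement is essentially a bookkeeping exercise, and the only point needing a little care is tracking the factors $(T^*)^{-1}$ when transporting the defining identities $HA = A^*H$ and $H = U^*HU$, together with the observation that the whole proposition is symmetric in the two indices, so that no separate computation for the reverse implication is required.
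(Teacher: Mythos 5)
Your proof is correct and follows essentially the same route as the paper: verify the factorisation $U_2A_2 = T^{-1}(U_1A_1)T = X_2$, note that conjugation by $T$ together with $H_2 = T^*H_1T$ preserves $H$-selfadjointness, $H$-unitarity and commutativity, and obtain the converse by symmetry (replacing $T$ by $T^{-1}$). The only difference is cosmetic: the paper delegates the structure-preservation step to its preceding remark on unitary equivalence, while you verify the identities $H_2A_2 = A_2^*H_2$ and $U_2^*H_2U_2 = H_2$ explicitly, which is exactly the computation that remark summarises.
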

\begin{proof}
Let $X_1 = U_1A_1$ be an $H_1$-polar decomposition of $X_1$. Then
\[
X_2 = T^{-1}X_1T = T^{-1}U_1A_1T = \left(T^{-1}U_1T\right)\cdot \left(T^{-1}A_1T\right) = U_2A_2.
\]
Furthermore, it follows from the above observation that $U_2$ is $H_2$-unitary and $A_2$ is $H_2$-selfadjoint. We conclude that $X_2 = U_2A_2$ is an $H_2$-polar decomposition. For the other implication, it suffices to exchange $T$ with $T^{-1}$.

Clearly $A_1U_1 = U_1A_1$ \iff $A_2U_2 = U_2A_2$.
\end{proof}
\begin{corol}
Let $X_1,X_2,H_1,H_2\in\mathbb{F}^{n,n}$ such that $H_1$ and $H_2$ are Hermitian nonsingular. If $(X_1,H_1)$ and $(X_2,H_2)$ are unitarily equivalent, then
\[
CPD(X_1,H_1) = CPD(X_2,H_2).
\]
\end{corol}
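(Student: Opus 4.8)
The plan is to deduce this corollary immediately from the preceding proposition, since it establishes a bijection between the two families of $H_1$- and $H_2$-polar decompositions with commuting factors. First I would recall that $CPD(X_i,H_i)$ counts the number of distinct $H_i$-selfadjoint factors $A_i$ arising in an $H_i$-polar decomposition $X_i = U_iA_i$ with commuting factors. Given a nonsingular $T$ with $X_2 = T^{-1}X_1T$ and $H_2 = T^*H_1T$, the map $A \mapsto T^{-1}AT$ is a bijection on $\mathbb{F}^{n,n}$ with inverse $B \mapsto TBT^{-1}$, so it suffices to check that it restricts to a bijection between the relevant sets of selfadjoint factors.

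The key step is the verification that $A_1$ is a valid commuting $H_1$-selfadjoint factor of $X_1$ if and only if $T^{-1}A_1T$ is a valid commuting $H_2$-selfadjoint factor of $X_2$. This is exactly the content of the proposition: if $X_1 = U_1A_1$ is an $H_1$-polar decomposition with commuting factors, then $X_2 = U_2A_2$ with $U_2 = T^{-1}U_1T$ and $A_2 = T^{-1}A_1T$ is an $H_2$-polar decomposition with commuting factors, and conversely (applying the proposition with the roles of $1$ and $2$ swapped and $T$ replaced by $T^{-1}$). Hence every commuting $H_1$-selfadjoint factor of $X_1$ is carried to a commuting $H_2$-selfadjoint factor of $X_2$, and every such factor of $X_2$ arises this way.

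Therefore the map $A_1 \mapsto T^{-1}A_1T$ is a bijection between
\[
\{A_1 \mid X_1 = U_1A_1 \text{ is an } H_1\text{-polar decomposition with commuting factors}\}
\]
and the corresponding set for $(X_2,H_2)$, so these sets have the same cardinality, i.e.\ $CPD(X_1,H_1) = CPD(X_2,H_2)$. I do not expect any real obstacle here: the only point requiring a little care is confirming injectivity, which is automatic because $A_1 \mapsto T^{-1}A_1T$ is globally injective on $\mathbb{F}^{n,n}$, so distinct factors of $X_1$ yield distinct factors of $X_2$. The proof is thus a short formal consequence of the proposition.
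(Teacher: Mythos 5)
Your argument is correct and is exactly the route the paper intends: the corollary is stated without proof precisely because the preceding proposition shows that $A \mapsto T^{-1}AT$ carries commuting $H_1$-selfadjoint factors of $X_1$ bijectively onto commuting $H_2$-selfadjoint factors of $X_2$, so the two cardinalities $CPD(X_1,H_1)$ and $CPD(X_2,H_2)$ agree. Your added remark on injectivity of conjugation by $T$ is the only detail the paper leaves unstated, and it is handled correctly.
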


Hence, it suffices to restrict ourselves to one represent of each equivalence class with respect to unitary equivalence. With Theorem \ref{theo:commuting_factors}, we can restrict our attention further to $H$-normal matrices only. 

\begin{defin}
A matrix $X\in\mathbb{F}^{n,n}$ is called \emph{$H$-decomposable}, if there exists a nonsingular $T\in\mathbb{F}^{n,n}$ such that
\[
T^{-1}XT = X_1\oplus X_2\quad \text{ and }\quad T^*HT = H_1\oplus H_2
\]
for some $X_1,H_1\in\mathbb{F}^{m,m}$ and $X_2,H_2\in\mathbb{F}^{n-m,n-m}$, $0 < m < n$. Otherwise, $X$ is called \emph{$H$-indecomposable}.
\end{defin}

Clearly, any matrix $X\in\mathbb{F}^{n,n}$ can always be decomposed as
\begin{equation}\label{eq:decomp_indecomp}
T^{-1}XT = X_1\oplus\dots\oplus X_p,\quad T^*HT = H_1\oplus\dots\oplus H_p
\end{equation}
where $X_j$ is $H_j$-indecomposable, $j=1,\dots,p$.

\begin{lem}\label{lem:standard_decomp}
Let $X\in\mathbb{F}^{n,n}$ and $H$ be as in \eqref{eq:decomp_indecomp}. If $X_j = U_jA_j$ are $H_j$-polar decompositions (with commuting factors), $j=1,\dots,p$, then
\[
T^ {-1}XT = (\underbrace{U_1\oplus\dots\oplus U_p}_{=:U})\cdot (\underbrace{A_1\oplus\dots\oplus A_p}_{=:A})
\]
is a $T^ *HT$-polar decomposition (with commuting factors).
\end{lem}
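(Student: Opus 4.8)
The plan is to reduce the whole statement to elementary algebra of block-diagonal matrices. Set $H' := T^*HT = H_1\oplus\dots\oplus H_p$ and recall that $T^{-1}XT = X_1\oplus\dots\oplus X_p$, where $X_j = U_jA_j\in\mathbb{F}^{n_j,n_j}$ and $n_1+\dots+n_p = n$; in particular $U_j$, $A_j$ and $H_j$ are all of size $n_j$, so every direct sum occurring below is formed with respect to the same block structure with blocks of sizes $n_1,\dots,n_p$. The one fact I would record first is that taking $H'$-adjoints respects this block structure: if $M = M_1\oplus\dots\oplus M_p$ is block diagonal with the same partition, then, combining $(H')^{-1} = H_1^{-1}\oplus\dots\oplus H_p^{-1}$, $M^* = M_1^*\oplus\dots\oplus M_p^*$ and the identity $(P_1\oplus P_2)(Q_1\oplus Q_2) = (P_1Q_1)\oplus(P_2Q_2)$, one obtains
\[
M^{[*]_{H'}} = (H')^{-1}M^*H' = \bigl(H_1^{-1}M_1^*H_1\bigr)\oplus\dots\oplus\bigl(H_p^{-1}M_p^*H_p\bigr) = M_1^{[*]_{H_1}}\oplus\dots\oplus M_p^{[*]_{H_p}}.
\]

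With this preliminary in hand, the three defining properties of an $H'$-polar decomposition follow blockwise. Applying the displayed identity to $A = A_1\oplus\dots\oplus A_p$ and using that every $A_j$ is $H_j$-selfadjoint gives $A^{[*]_{H'}} = A_1\oplus\dots\oplus A_p = A$, so $A$ is $H'$-selfadjoint. Applying it to $U = U_1\oplus\dots\oplus U_p$ and using that every $U_j$ is $H_j$-unitary gives $U^{[*]_{H'}}U = \bigl(U_1^{[*]_{H_1}}U_1\bigr)\oplus\dots\oplus\bigl(U_p^{[*]_{H_p}}U_p\bigr) = I_{n_1}\oplus\dots\oplus I_{n_p} = I_n$, so $U$ is $H'$-unitary. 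Finally, block-multiplicativity of the direct sum gives $UA = (U_1A_1)\oplus\dots\oplus(U_pA_p) = X_1\oplus\dots\oplus X_p = T^{-1}XT$. Hence $T^{-1}XT = UA$ is an $H'$-polar decomposition.

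For the parenthetical ``with commuting factors'' version, I would simply observe that if moreover $U_jA_j = A_jU_j$ for every $j$, then the same block-multiplicativity yields $UA = (U_1A_1)\oplus\dots\oplus(U_pA_p) = (A_1U_1)\oplus\dots\oplus(A_pU_p) = AU$. There is essentially no obstacle in this proof: every step is an iterated application of $(P_1\oplus P_2)(Q_1\oplus Q_2) = (P_1Q_1)\oplus(P_2Q_2)$, $(P_1\oplus P_2)^* = P_1^*\oplus P_2^*$ and $(P_1\oplus P_2)^{-1} = P_1^{-1}\oplus P_2^{-1}$, carried out $p-1$ times; the only thing to be careful about is the bookkeeping of the block sizes, which is guaranteed by the hypothesis that each $X_j = U_jA_j$ is an $H_j$-polar decomposition.
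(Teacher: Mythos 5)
Your proof is correct and follows essentially the same route as the paper: a blockwise verification that $U$ is $T^*HT$-unitary, $A$ is $T^*HT$-selfadjoint, $UA = T^{-1}XT$, and that commutativity passes through the direct sum. The only cosmetic difference is that you phrase the checks via the $[*]_{H'}$-adjoint while the paper uses the equivalent reformulations $U^*\widetilde{H}U = \widetilde{H}$ and $\widetilde{H}^{-1}A^*\widetilde{H} = A$.
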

\begin{proof}
Let $\widetilde{H} := T^{*}HT$. First,
\[
U^*\widetilde{H}U = (U_1^*H_1U_1)\oplus\dots\oplus(U_p^*H_pU_p) = H_1\oplus\dots\oplus H_p = \widetilde{H}
\]
and
\[
\widetilde{H}^{-1}A^*\widetilde{H} = (H_1^{-1}A_1^*H_1)\oplus\dots\oplus (H_p^{-1}A_p^*H_p) = A_1\oplus\dots\oplus A_p = A.
\]
Thus, $U$ is $\widetilde{H}$-unitary and $A$ is $\widetilde{H}$-selfadjoint. Finally
\[
UA = (U_1A_1)\oplus\dots\oplus(U_pA_p) = X_1\oplus\dots\oplus X_p = X.
\]
If $U_jA_j = A_jU_j$, $j=1,\dots,p$, then clearly $UA = AU$.
\end{proof}
\begin{lem}
Let $X\in\mathbb{F}^{n,n}$ and $H$ be as in \eqref{eq:decomp_indecomp}. Then 
\begin{enumerate}
	\item $X$ is $H$-normal \iff $X_j$ is $H_j$-normal, $j = 1,\dots, p$
	\item and $\ker X = \ker X^{[*]_H}$ \iff $\ker X_j = \ker X_j^{[*]_{H_j}}$, $j=1,\dots,p$.
\end{enumerate}
\end{lem}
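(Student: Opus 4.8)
The plan is to reduce at once to the case $T = I_n$, so that $X = X_1\oplus\dots\oplus X_p$ and $H = H_1\oplus\dots\oplus H_p$ hold literally, and then argue blockwise. For the reduction, recall that the remarks following the definition of unitary equivalence already record that $H$-normality is invariant under unitary equivalence. The kernel condition is invariant too: if $X_2 = T^{-1}X_1T$ and $H_2 = T^*H_1T$, then $X_2^{[*]_{H_2}} = H_2^{-1}X_2^*H_2 = T^{-1}\bigl(H_1^{-1}X_1^*H_1\bigr)T = T^{-1}X_1^{[*]_{H_1}}T$ after cancelling $(T^*)^{-1}T^*$ and $T^*(T^*)^{-1}$, whence $\ker X_2 = T^{-1}\ker X_1$ and $\ker X_2^{[*]_{H_2}} = T^{-1}\ker X_1^{[*]_{H_1}}$, so $\ker X_2 = \ker X_2^{[*]_{H_2}}$ holds if and only if $\ker X_1 = \ker X_1^{[*]_{H_1}}$. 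Applying this with $(X_1,H_1)=(X,H)$ and $(X_2,H_2)=(\bigoplus X_j,\bigoplus H_j)$ lets us assume $T=I_n$ from now on.

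Next I would note that the $H$-adjoint respects the block-diagonal splitting. Since $H^{-1} = H_1^{-1}\oplus\dots\oplus H_p^{-1}$ and $X^* = X_1^*\oplus\dots\oplus X_p^*$, and a product of block-diagonal matrices with conformal blocks is block-diagonal with the blockwise products, we obtain $X^{[*]_H} = H^{-1}X^*H = X_1^{[*]_{H_1}}\oplus\dots\oplus X_p^{[*]_{H_p}}$. Part (1) is then immediate: $X^{[*]_H}X$ and $XX^{[*]_H}$ are block-diagonal with $j$-th blocks $X_j^{[*]_{H_j}}X_j$ and $X_jX_j^{[*]_{H_j}}$ respectively, and two block-diagonal matrices with matching block sizes agree if and only if all corresponding blocks agree; hence $X^{[*]_H}X = XX^{[*]_H}$ if and only if $X_j^{[*]_{H_j}}X_j = X_jX_j^{[*]_{H_j}}$ for every $j$.

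For part (2) I would use that $\ker(Y_1\oplus\dots\oplus Y_p) = \ker Y_1\oplus\dots\oplus\ker Y_p$ with respect to the fixed splitting $\mathbb{F}^n = \mathbb{F}^{m_1}\oplus\dots\oplus\mathbb{F}^{m_p}$. Applying this to $Y_j = X_j$ and to $Y_j = X_j^{[*]_{H_j}}$ gives $\ker X = \ker X_1\oplus\dots\oplus\ker X_p$ and $\ker X^{[*]_H} = \ker X_1^{[*]_{H_1}}\oplus\dots\oplus\ker X_p^{[*]_{H_p}}$; since two internal direct sums along the same ambient decomposition coincide precisely when their summands coincide, $\ker X = \ker X^{[*]_H}$ holds if and only if $\ker X_j = \ker X_j^{[*]_{H_j}}$ for all $j$.

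There is no serious obstacle in this lemma — it only packages the natural compatibility of the structures with direct sums. The two points deserving a moment's care are the behaviour of the $H$-adjoint under the similarity $T$ (the short cancellation computation above) and phrasing the blockwise arguments for general $p$ rather than just $p=2$; the latter is handled either directly, as above, or by an easy induction on $p$ using associativity of $\oplus$.
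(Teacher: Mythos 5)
Your proof is correct and follows essentially the same route as the paper: both arguments rest on the observation that the $H$-adjoint becomes blockwise after the transformation by $T$ (i.e.\ $X^{[*]_H} = T\,(X_1^{[*]_{H_1}}\oplus\dots\oplus X_p^{[*]_{H_p}})\,T^{-1}$), followed by comparing blocks for normality and comparing blockwise kernels for the kernel condition. The only difference is cosmetic: you first reduce to $T = I_n$ by invoking (and correctly verifying) invariance under unitary equivalence, whereas the paper simply carries the conjugation by $T$ through the same computation.
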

\begin{proof}
\begin{enumerate}
	\item Note that
\[
X^{[*]_H}X = T\cdot \Big{(}(X_1^{[*]_{H_1}}X_1)\oplus\dots\oplus(X_p^{[*]_{H_p}}X_p)\Big{)}\cdot T^{-1}
\]
and
\[
XX^{[*]_H} =T\cdot \Big{(}(X_1X_1^{[*]_{H_1}})\oplus\dots\oplus(X_pX_p^{[*]_{H_p}})\Big{)}\cdot T^{-1}.
\]
The assertion is derived by equating the blocks.
	\item Let $v = [v_1,\dots, v_p]^T\in\mathbb{F}^n$ such that $X_jv_j$ is defined. Then
	\[
	T^{-1}XTv = [X_1v_1, \dots, X_pv_p]^T\quad\text{ and }\quad T^{-1}X^{[*]_{H}}Tv = [X_1^{[*]_{H_1}}v_1,\dots, X_p^{[*]_{H_p}}v_p]^T.
	\]
	Clearly $Xv = X^{[*]_H}v$ \iff $X_jv_j = X_j^{[*]_{H_j}}v_j$ for $j=1,\dots,p$.\qedhere
\end{enumerate}
\end{proof}
\begin{prop}
Let $\mathbb{F} = \mathbb{C}$ and  let $X,H\in\mathbb{F}^{n,n}$ be as in \eqref{eq:decomp_indecomp}. If $X$ is $H$-normal and $\ker X = \ker X^{[*]_H}$, then
\[
CPD(X, H) \geq 2^p.
\]
\end{prop}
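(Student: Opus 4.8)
The plan is to pass to the $H$-indecomposable summands $X_j$ of \eqref{eq:decomp_indecomp} and to exploit the elementary observation that, on each block, one given $H_j$-polar decomposition with commuting factors $X_j = U_j A_j$ at once yields a second one, namely $X_j = (-U_j)(-A_j)$. Choosing a sign independently on each of the $p$ blocks then produces $2^p$ block-diagonal $H$-polar decompositions of $X$ with commuting factors, and the task reduces to showing that their $H$-selfadjoint factors are pairwise distinct.

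Carrying this out, I would first apply the preceding lemma: from \eqref{eq:decomp_indecomp} together with the hypotheses that $X$ is $H$-normal and $\ker X = \ker X^{[*]_H}$, each $X_j$ is $H_j$-normal with $\ker X_j = \ker X_j^{[*]_{H_j}}$. Since $\mathbb{F} = \mathbb{C}$, Theorem~\ref{theo:commuting_factors} applies to every block and gives $H_j$-polar decompositions $X_j = U_j A_j$ with $U_j A_j = A_j U_j$ for $j = 1,\dots,p$. For each sign vector $\varepsilon = (\varepsilon_1,\dots,\varepsilon_p) \in \llbrace -1,1\rrbrace^p$ set
\[
U_\varepsilon := \varepsilon_1 U_1 \oplus \dots \oplus \varepsilon_p U_p, \qquad A_\varepsilon := \varepsilon_1 A_1 \oplus \dots \oplus \varepsilon_p A_p .
\]
On the $j$-th block, $\varepsilon_j U_j$ is $H_j$-unitary because $(\varepsilon_j U_j)^{[*]}(\varepsilon_j U_j) = \varepsilon_j^2\, U_j^{[*]} U_j = I$, the matrix $\varepsilon_j A_j$ is $H_j$-selfadjoint, the two commute, and their product is $\varepsilon_j^2 U_j A_j = X_j$; hence $X_j = (\varepsilon_j U_j)(\varepsilon_j A_j)$ is again an $H_j$-polar decomposition with commuting factors. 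By Lemma~\ref{lem:standard_decomp}, $T^{-1}XT = U_\varepsilon A_\varepsilon$ is a $(T^*HT)$-polar decomposition with commuting factors, and the proposition relating $H$-polar decompositions of unitarily equivalent pairs, applied in the direction that recovers a decomposition of $X$ from one of $T^{-1}XT$, gives an $H$-polar decomposition $X = \widetilde U_\varepsilon \widetilde A_\varepsilon$ with commuting factors, where $\widetilde A_\varepsilon = T A_\varepsilon T^{-1}$.

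Finally I would count. Conjugation by the fixed nonsingular $T$ is injective, so $\widetilde A_\varepsilon = \widetilde A_{\varepsilon'}$ forces $A_\varepsilon = A_{\varepsilon'}$, and since both are block diagonal for the same partition this means $\varepsilon_j A_j = \varepsilon_j' A_j$, hence $A_j = 0$, for every $j$ with $\varepsilon_j \ne \varepsilon_j'$. Thus the $2^p$ matrices $\widetilde A_\varepsilon$ are pairwise distinct provided every $A_j$ is nonzero, and $A_j = 0$ is equivalent to $X_j = 0$ because $U_j$ is invertible. I expect this nonvanishing to be the only real obstacle of the argument: one has to argue that none of the indecomposable blocks $X_j$ is the zero matrix — a zero $H_j$-indecomposable block can only be $1 \times 1$, so after discarding such trivial summands the $2^p$ constructed decompositions are genuinely different and yield $CPD(X,H) \geq 2^p$.
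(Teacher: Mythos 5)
Your proof follows essentially the same route as the paper: flip the sign blockwise, obtain one commuting $H_j$-polar decomposition per indecomposable block from Theorem~\ref{theo:commuting_factors}, reassemble with Lemma~\ref{lem:standard_decomp}, and transport back to $X$ via unitary equivalence of $(X,H)$ and $(T^{-1}XT,T^*HT)$. Your additional care about the distinctness of the factors $A_\varepsilon$ (hence about $A_j\neq 0$, i.e.\ $X_j\neq 0$) actually goes beyond the paper, whose proof simply declares that the assertion follows directly and silently passes over the degenerate possibility of a $1\times 1$ zero block, where the two sign choices give the same selfadjoint factor.
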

\begin{proof}
If $X_j = U_jA_j$ is an $H_j$-polar decomposition with commuting factors, then 
\begin{equation}\label{eq:oppos_decomp}
X_j = (-U_j)(-A_j)
\end{equation}
is also an $H_j$-polar decomposition with commuting factors. Furthermore, $X_j$ is $H_j$ normal and $\ker X_j = \ker X_j^{[*]_{H_j}}$, $j=1,\dots,p$. Thus, there exists (at least) one $H_j$-polar decomposition with commuting factors (cf. Theorem \ref{theo:commuting_factors}) and its opposite decomposition \eqref{eq:oppos_decomp}. The assertion follows directly.
\end{proof}

\section{Concluding Remarks and Open Problems}

In this last section we will discuss open problems concerning polar decompositions in indefinite inner product space with special interest in polar decompositions with commuting factors. Many of these problems were already known, some became apparent within this thesis.

The examples in the Appendix have answered some questions and raised others. But since not all factorizations are known, the completion of the list might answer some of the open problems or lead to new questions.\\

For the first problem, recall that a matrix $X\in\mathbb{F}^{n,n}$ admits an $H$-polar decomposition \iff there exists an $H$-selfadjoint matrix $A\in\mathbb{F}^{n,n}$ such that $A^2 = X^{[*]}X$ and ${\ker A = \ker X}$. We have already seen, that there does not always exist an $H$-polar decomposition, since $X^{[*]}X$ might not admit any square root. However, an example proving that an $H$-polar decomposition does not exist, when no \emph{$H$-selfadjoint} square root exists, was not yet found. The same question arises exchanging the conditions $\ker A = \ker X$ and $A^{[*]} = A$.

\begin{prob}[Existence of $H$-selfadjoint square roots]\normalfont
Let $X,Y\in\mathbb{F}^{n,n}$ such that ${Y^2 = X^{[*]}X}$ such that $\ker X = \ker Y$. Does there exist an $H$-selfadjoint matrix $A\in\mathbb{F}^{n,n}$ such that $A^2 = X^{[*]}X$ and $\ker X = \ker A$.
\end{prob}

\begin{prob}[Existence of square roots with same kernel]\normalfont
Let $X\in\mathbb{F}^{n,n}$ and let $Y\in\mathbb{F}^{n,n}$ be $H$-selfadjoint such that ${Y^2 = X^{[*]}X}$. Doest there exist an $H$-selfadjoint matrix $A\in\mathbb{F}^{n,n}$ such that $\ker A = \ker X$ and $A^2 = X^{[*]}X$.
\end{prob}

We will now concentrate on $H$-polar decomposition with commuting factors. In this thesis, we could extend Theorem \ref{theo:commuting_factors} from \cite{mrr} to $\mathbb{F} = \mathbb{R}$ for a specific class of matrices. This raises the following problem:

\begin{prob}\normalfont
Can Theorem \ref{theo:commuting_factors} be extended to a more general class of matrices $X\in\mathbb{R}^{n,n}$ in the case $\mathbb{F} = \mathbb{R}$? What conditions have to be added to the theorem? In particular: Can the theorem be extended to all nonsingular matrices?
\end{prob}

In this thesis, it was shown that the commutativity property of the factors is independent of the chosen $H$-unitary factor. Even if this is only of interest if $X$ is singular, the following problem has to be formulated.
\begin{prob}[Characterisation of $H$-polar decompositions with commuting factors]\normalfont
Is it possible to characterise for a given $X\in\mathbb{F}^{n,n}$ all $H$-selfadjoint factors of $H$-polar decompositions of $X$ with commuting factors? If so, is it possible to use this characterisation for the actual computation of $H$-polar decomposition with commuting factors?
\end{prob}

Closely related to this problem is the following one. We showed, that every $H$-polar decomposition $X = UA$ with commuting factors satisfies $AX = XA$. The reverse implication does not hold in general if $X$ is singular. 
\begin{prob}[Characterisation of $H$-polar compositions with $AX = XA$]\normalfont
Is it possible to characterise for a given $X\in\mathbb{F}^{n,n}$ all $H$-selfadjoint $A\in\mathbb{F}^{n,n}$ factors in an $H$-polar decomposition satisfying $AX = XA$? Assume that $AX = XA$. Is it possible to give a necessary or sufficient condition on $A$ such that $A$ is a factor in an $H$-polar decomposition with commuting factors?
\end{prob}

The examples in the Appendix exhibit some common structure. For this, consider the case $\mathbb{F} = \mathbb{C}$. It seems that the $H$-selfadjoint factors with $AX = XA$ form a connected component and that the $H$-selfadjoint factors of $H$-polar decompositions with commuting factors are unique up to a sign.

\begin{prob}\normalfont
Let $X\in\mathbb{C}^{n,n}$. Do the $H$-selfadjoint factors of $H$-polar decompositions with $AX = XA$ form a (path-)connected component? Is $CPD(X,H) = 2$ if $X$ is $H$-indecomposable?
\end{prob}

More general, one can search for an upper bound for the cardinal of $H$-polar decompositions with commuting factors and improve the lower bound that we proved in this thesis. This leads to multiple problems related to the structure of $H$-polar decompositions of $H$-decomposable matrices.

\begin{prob}[$H$-polar decompositions of $H$-decomposable matrices]\normalfont
Let $X\in\mathbb{F}^{n,n}$ with $p$ indecomposable factors. Is the lower bound $CPD(X,H) \geq 2^p$ sharp, i.e. does there exist for every $p\in\mathbb{N}\setminus\llbrace 0\rrbrace$ a matrix $X$ such that $CPD(X, H) = 2^p$? Does there exist an upper bound? Is $CPD(X,H) = 2^p$, i.e. can every $H$-polar decomposition of $X$ be decomposed in $p$ indecomposable $H$-polar decompositions of the indecomposables of $X$?
\end{prob}\hfill\\

What can we get out of this thesis? To answer this, we need to assume that the above problems can be answered as we would like it. Furthermore, we can only speak about $H$-normal matrices $X\in\mathbb{C}^{n,n}$.

We have seen that the generalised polar decomposition has commuting factors, whenever it exists. And investigating the algorithm from Theorem \ref{theo:approx}, we even obtain a \textquotedblleft standard\textquotedblright{} $H$-polar decomposition in the sense of Lemma \ref{lem:standard_decomp}, i.e. an $H$-polar decomposition which preserves the structure of the original matrix. The problem with the generalised polar decomposition is that it does not exist for matrices $X$ such that $X^{[*]}X$ has nonpositive eigenvalues. In the case that $X^{[*]}X$ has no negative eigenvalues, we can still use the canonical generalised polar decomposition with the small disadvantage that the $H$-unitary factor is not uniquely determined. But as soon as $X^{[*]}X$ has one negative eigenvalue, no (canonical) generalised polar decomposition does exist. To define the $H$-polar decomposition for those matrices in a unique way, we thus need another constraint than the restriction on the spectrum of the $H$-selfadjoint factor.

Assume now that $H$-indecomposable matrices indeed have exactly two $H$-polar decompositions with commuting factors. Since we cannot arguably choose between these two decompositions, we will consider them equivalent for now. Nevertheless, we will say for simplicity in this case that the $H$-polar decomposition is uniquely determined. By requesting an $H$-polar decomposition of an $H$-decomposable to have the structure preserving form from Lemma \ref{lem:standard_decomp}, we get exactly $2^p$ possible candidates for $H$-decomposable matrices $X = X_1\oplus\cdots\oplus X_p$. To choose between these $2^p$ decompositions, it would suffices to choose between the two $H$-polar decompositions with commuting factors of $H$-indecomposable matrices. For now, we assume that this choice can (somehow) be done. In this case, we would have defined a unique $H$-polar decomposition on the set of $H$-normal matrices with $\ker X = \ker X^{[*]}$! And furthermore, we have shown that it coincides with the (canonical) generalised polar decomposition whenever it exists. In other words, we could extend the definition of a unique $H$-polar decomposition to a larger class of matrices.

There are obviously still many problems to solve. On the one hand, we used many assumptions for our perspective, and on the other hand, there is no known condition to choose between the two $H$-polar decompositions of $H$-indecomposable matrices. Another problem becomes apparent, when we try to compute such a decomposition. Whereas the sequence of Theorem \ref{theo:approx} can be extended to singular matrices, it does not work for matrices with $X^{[*]}X$ having negative eigenvalues. (In fact, the sequence is not even necessarily well-defined in that case.) In other words: there is no possibility to compute an $H$-polar decomposition with commuting factors in the case that $X^{[*]}X$ has negative eigenvalues. 

We can conclude that a lot has to be done to realise the perspective of $H$-polar decompositions with commuting factors as constraints for a unique $H$-polar decomposition. Apart from the above problems, it seems promising to investigate $H$-polar decompositions with commuting factors for structured matrices, i.e. to examine if $H$-unitary, $H$-skewadjoint or otherwise structured matrices admit $H$-polar decompositions with commuting factors and if they can easily be computed.

\cleardoublepage
\bibliography{lit}\addcontentsline{toc}{section}{References}

\begin{thebibliography}{12}
	\bibitem{mrr} C. Mehl, A. C. M. Ran, L. Rodman. Polar decompositions of normal operators in indefinite inner product spaces. \emph{Oper. Theory Adv. Appl.}, 162:277-292, 2006.
	
	\bibitem{lmmr} B. Lins, P. Meade, C. Mehl, L. Rodman. Polar decompositions of indecomposable normal matrices in indefinite inner products: Explicit formulas and open problems. \emph{Linear and Multilinear Algebra}, 49:45-89, 2001.
	
	\bibitem{br} Y. Bolshakov and B. Reichstein. Unitary equivalence in an indefinite scalar product: an analogue of singular-value decomposition. \emph{Linear Algebra Appl.}, 222: 155-226, 1995.
	
	\bibitem{bmrrr1} Y. Bolshakov, C. V. M. van der Mee, A. C. M. Ran, B. Reichstein and L. Rodman. Extension of isometries in finite-dimensional indefinite scalar product spaces and polar decompositions. \emph{SIAM J. Matrix Anal. Appl.}, 18(3):752-774, 1997.
		
	\bibitem{BMRRR} Y. Bolshakov, C. V. M. van der Mee, A. C. M. Ran, B. Reichstein and L. Rodman. Polar decompositions in finite-dimensional indefinite scalar product spaces: Special cases and applications. \emph{Operator Theory:
Advances and Applications}, 87:61–94, 1996.

	\bibitem{bmrrr} Y. Bolshakov, C. V. M. van der Mee, A. C. M. Ran, B. Reichstein and L. Rodman. Polar decompositions in finite-dimensional indefinite scalar product spaces: general theory. \emph{Linear Algebra Appl.}, 261:91-141, 1997.

	\bibitem{glr} I. Gohberg, P. Lancaster, L. Rodman. \emph{Indefinite Linear Algebra and Applications}. Birkhäuser Verlag, Basel, 2005.
	
	\bibitem{h} N. J. Higham. \emph{Functions of Matrices: Theory and Computation}. Society for Industrial and Applied Mathematics. Philadelphia, 2008.
	
	\bibitem{s} J. J. Sylvester. \emph{Sur l'équation en matrices $px = xq$}. Comptes Rendus Académie des Science Paris, 99:67–71,115–116, 1884. (French)
	
	\bibitem{mmt} D. S. Mackey, N. Mackey, F. Tisseur. Structured factorizations in scalar product spaces. \emph{SIAM J. Matrix Anal. Appl.}, 27(3):821-850, 2005.

	\bibitem{hmt} N. J. Higham, C. Mehl, F. Tisseur. The canonical generalized polar decomposition. \emph{SIAM J. Matrix Anal. Appl.}, 31(4), 2163–2180, 2010.
 
	\bibitem{gr} I. Gohberg and B. Reichstein. On classification of normal matrices in an indefinite scalar product. \emph{Integral Equations and Operator Theory}, 13:364-394, 1990.
	
	\bibitem{hs} O. Holtz and V. Strauss. On classification of normal operators in real spaces with indefinite inner product. \emph{Linear Algebra Appl.} 255:113-155, 1997.
	
	\bibitem{HS} O. Holtz and V. Strauss. Classification of normal operators in spaces with indefinite scalar product of rank 2. 2005.
	
	\bibitem{sb} S. Bosch. Lineare Algebra, 3rd Edition. \emph{Springer-Lehrbuch}. Heidelberg, 2006. (German)
	
	\bibitem{k} U. Kintzel. Procrustes problems in finite dimensional indefinite scalar product spaces. \emph{Linear Algebra Appl.}, 402:1-28. 2005.
	
	\bibitem{m} C. Mehl. Finite dimensional indefinite inner product spaces and applications in Numerical Analysis. In: \emph{Operator Theory}, 431-449. Edited by Daniel Alpay. Springer, Basel. 2015.

\end{thebibliography}

\cleardoublepage

\begin{appendices}
\section{List of $H$-polar decompositions with commuting factors of singular indecomposable matrices}

\subsection{Preliminaries}

Recall from section \ref{ssec:indecomp} that one can decompose every matrix as
\[
T^ {-1}XT = X_1\oplus\dots\oplus X_p,\quad T^ *HT = H_1\oplus\dots\oplus H_p,
\]
where $X_j$ is $H_j$-indecomposable, $j=1,\dots,p$. Thus, the question of classifying the indecomposable matrices arises naturally. Generally, it is not possible to determine all normal forms for the unitary equivalence, as this has been shown to be a wild problem. However, O. Holtz and V. Strauss were able to find all normal forms for $H$-normal matrices in the case where $H$ only has one or two negative eigenvalues (cf. \cite{hs} and \cite{HS}).

In \cite{lmmr}, $H$-polar decompositions are given for these indecomposable $H$-normal matrices. The purpose of this appendix is to examine the relation between $H$-polar decompositions with commuting factors and those with the weaker property $AX = XA$. For this, we will list $H$-polar decompositions with commuting factors and $H$-polar decompositions satisfying $AX = XA$ for indecomposable $H$-normal matrices. Since both properties are equivalent if $X$ is invertible, we will concentrate on the singular indecomposable $H$-normal matrices.

For readability, we denote by \emph{commuting $H$-polar decompositions} $H$-polar decompositions with commuting factors and by \emph{semicommuting $H$-polar decompositions} $H$-polar decompositions with the weaker property $AX = XA$.\\

We consider four cases (in Sections \ref{sec:real_1} through \ref{sec:complex_2}, respectively): $\mathbb{F} = \mathbb{R}$ and $H$ has exactly one negative eigenvalue; $\mathbb{F} = \mathbb{R}$ and $H$ has exactly two negative eigenvalues; $\mathbb{F} = \mathbb{C}$ and $H$ has exactly one eigenvalue; $\mathbb{F} = \mathbb{C}$ and $H$ has exactly two negative eigenvalues. With the exception of \Href{5.II}, we always provide a complete list of semicommuting $H$-polar decompositions. As the proof of the completeness of the lists would reduce the readability, they are found in Appendix \ref{sec*:Appendix} at the end of the thesis. \\

We will use the following notation: $Z_p=\left[\delta_{i+j,p+1}\right]_{i,j=1,\dots,p}$ is the $p\times p$ matrix with ones on the southwest-northeast diagonal and zeros elsewhere; $I_p$ is the $p\times p$ identity matrix. $\Re (z) = \dfrac{z + \conj{z}}{2}$ and $\Im (z) = \dfrac{z - \conj{z}}{2i}$ stand for the real and the imaginary part of a complex number $z\in\mathbb{C}$, respectively.

\subsection{The real case; one negative eigenvalue}\label{sec:real_1}

A complete classification of indecomposable normals in the real case with $H$ having one negative eigenvalue is given in \cite[Section 3]{hs}. We have the following list of singular types.

\newType
\[
X = \begin{bmatrix}
0 & 0\\
0 & \lambda
\end{bmatrix},\quad \lambda > 0, \quad H = Z_2.
\]
No semicommuting $H$-polar decomposition exists.

\newType
\[
X = \begin{bmatrix}
0& z\\
0&0
\end{bmatrix},\quad z\in\llbrace -1, 1\rrbrace,\quad H = Z_2.
\]
Every semicommuting $H$-polar decomposition has the form
\[
U = \begin{bmatrix}
\frac{z}{s} & 0\\
0 & sz
\end{bmatrix},\qquad A = \begin{bmatrix}
0 & s\\
0&0
\end{bmatrix},\quad s\in\mathbb{R}\setminus\llbrace0\rrbrace.
\]
It commutes if and only if $s \in\llbrace -1 , 1\rrbrace$.

\newType
\[
X = \begin{bmatrix}
0 & 1 & 0\\
0 & 0 & 1\\
0 & 0 & 0
\end{bmatrix},\quad H = Z_3.
\]
Every semicommuting $H$-polar decomposition has the form
\[
U = \begin{bmatrix}
\epsilon & -s & -\frac{\epsilon s^2}{2}\\
0 & \epsilon & s\\
0 & 0 & \epsilon
\end{bmatrix},\qquad A = \begin{bmatrix}
0 & \epsilon & s\\
0 & 0 & \epsilon\\
0 & 0 & 0
\end{bmatrix},\quad \epsilon\in\llbrace -1,1\rrbrace, s \in\mathbb{R}.
\]
It commutes if and only if $s = 0$.

\newType
\[
X = \begin{bmatrix}
0 & 1 & r\\
0 & 0 & -1\\
0 & 0 & 0
\end{bmatrix},\quad r\in\mathbb{R},\quad H = Z_3.
\]
No semicommuting $H$-polar decomposition exists.

\newType
\[
X = \begin{bmatrix}
0 & 1 & 0 & 0\\
0 & 0 & 0 & \cos\alpha\\
0 & 0 & 0 & \sin\alpha\\
0 & 0 & 0 & 0
\end{bmatrix},\quad 0< \alpha<\pi, \quad H = \begin{bmatrix}
0 & 0 & 1\\
0 & I_2 & 0\\
1 & 0 & 0
\end{bmatrix}.
\]
No semicommuting $H$-polar decomposition exists.

\subsection{The real case; two negative eigenvalues}\label{sec:real_2}
A complete classification of indecomposable normals in the real case with $H$ having two negative eigenvalues is given in \cite{hs}. We only consider the following list of singular types.

\newType
\[
X = \begin{bmatrix}
0 & 1 & 0 & 0\\
0 & 0 & z & 0\\
0 & 0 & 0 & 1\\
0 & 0 & 0 & 0
\end{bmatrix},\quad z\in\llbrace -1,1\rrbrace,\quad H = Z_4.
\]
Every semicommuting $H$-polar decomposition has the form
\[
U = \begin{bmatrix}
\epsilon & 0 & -s & 0\\
0 & \epsilon & 0 & s\\
0 & 0 & \epsilon & 0\\
0 & 0 & 0 & \epsilon
\end{bmatrix},\qquad A = \begin{bmatrix}
0 & \epsilon & 0 & s\\
0 & 0 & \epsilon z & 0\\
0 & 0 & 0 &\epsilon\\
0 & 0 & 0 & 0
\end{bmatrix},\quad \epsilon\in\llbrace -1,1\rrbrace, s\in\mathbb{R}.
\]
These $H$-polar decompositions are commuting if and only if $s = 0$.

\newType
\[
X = \begin{bmatrix}
0 & 1 & 0 & 0 & 0\\
0 & 0 & 1 & 0 & 0\\
0 & 0 & 0 & 1 & 0\\
0 & 0 & 0 & 0 & 1\\
0 & 0 & 0 & 0 & 0
\end{bmatrix},\quad H = Z_5.
\]
Every semicommuting $H$-polar decomposition has the form
\[
U = \begin{bmatrix}
\epsilon & 0 & 0 & -s & 0\\
0 & \epsilon & 0 & 0 & s\\
0 & 0 & \epsilon & 0 & 0\\
0 & 0 & 0 & \epsilon & 0\\
0 & 0 & 0 & 0 & \epsilon
\end{bmatrix}, \qquad A = \begin{bmatrix}
0 & \epsilon & 0 & 0 & s\\
0 & 0 & \epsilon & 0 & 0\\
0 & 0 & 0 & \epsilon & 0\\
0 & 0 & 0 & 0 & \epsilon\\
0 & 0 & 0 & 0 & 0
\end{bmatrix},\quad \epsilon\in\llbrace -1,1\rrbrace, s\in\mathbb{R}.
\]
These $H$-polar decompositions are commuting if and only if $s = 0$.

\newType
\[
X = \begin{bmatrix}
0 & 1 & -r & 0 & s\\
0 & 0 & 1 & r & 0\\
0 & 0 & 0 & -1 & -r\\
0 & 0 & 0 & 0 & -1\\
0 & 0 & 0 & 0 & 0
\end{bmatrix},\quad r,s\in\mathbb{R},\quad H = Z_5.
\]
No semicommuting $H$-polar decomposition exists.

\newType
\[
X = \begin{bmatrix}
0 & 1 & 0 & 0\\
0 & 0 & 0 & z\\
0 & 0 & 0 & 0\\
0 & 0 & 0 & 0
\end{bmatrix},\quad z\in\llbrace -1,1\rrbrace,\quad H = Z_4.
\]
No semicommuting $H$-polar decomposition exists.

\newType
\[
X = \begin{bmatrix}
0 & 1 & z & 0\\
0 & 0 & 0 & r\\
0 & 0 & 0 & \frac{z}{r}\\
0 & 0 & 0 & 0
\end{bmatrix},\quad z\in\llbrace -1,1\rrbrace, r\in\mathbb{R}, \vert r\vert > 1,\quad H = Z_4.
\]
No semicommuting $H$-polar decomposition exists.

\newType
\[
X = \begin{bmatrix}
0 & 1 & 0 & \frac{r^2}{2} & 0\\
0 & 0 & 0 & z & 0\\
0 & 0 & 0 & 0 & r\\
0 & 0 & 0 & 0 & 1\\
0 & 0 & 0 & 0 & 0
\end{bmatrix},\quad z\in\llbrace -1,1\rrbrace, r > 0,\quad H = Z_5.
\]
No semicommuting $H$-polar decomposition exists.

\newType
\[
X = \begin{bmatrix}
0 & 1 & 0 & 0 & 0 & 0\\
0 & 0 & 1 & 0 & 0 & -\frac{r^2}{2}\\
0 & 0 & 0 & 1 & 0 & 0\\
0 & 0 & 0 & 0 & 0 & 1\\
0 & 0 & 0 & 0 & 0 & r\\
0 & 0 & 0 & 0 & 0 & 0
\end{bmatrix},\quad r > 0,\quad H = \begin{bmatrix}
0 & 0 & 0 & 1\\
0 & Z_3 & 0 & 0\\
0 & 0 & 1 & 0\\
1 & 0 & 0 & 0
\end{bmatrix}.
\]
No semicommuting $H$-polar decomposition exists.

\newType
\[
X = \begin{bmatrix}
0 & 1 & -2r & 0 & 0 & 0\\
0 & 0 & 1 & r & 0 & -2r^2 + \frac{s^2}{2}\\
0 & 0 & 0 & -1 & 0 & 0\\
0 & 0 & 0 & 0 & 0 & -1\\
0 & 0 & 0 & 0 & 0 & s\\
0 & 0 & 0 & 0 & 0 & 0
\end{bmatrix},\quad r,s\in\mathbb{R}, s > 0,\quad H = \begin{bmatrix}
0 & 0 & 0 & 1\\
0 & Z_3 & 0 & 0\\
0 & 0 & 1 & 0\\
1 & 0 & 0 & 0
\end{bmatrix}.
\]
No semicommuting $H$-polar decomposition exists.

\newType
\[
X = \begin{bmatrix}
0 & 0 & \cos\alpha & \sin\alpha\\
0 & 0 & -\sin\alpha & \cos\alpha\\
0 & 0 & 0 & 0\\
0 & 0 & 0 & 0
\end{bmatrix},\quad 0 < \alpha < \pi,\quad H = \begin{bmatrix}
0 & I_2\\
I_2 & 0
\end{bmatrix}.
\]
Every semicommuting $H$-polar decomposition has the form
\[
U = \begin{bmatrix}
U_{11} & U_{12}(u)\\
0 & U_{22}\\
\end{bmatrix},\quad A = \begin{bmatrix}
0 & 0 & r & s\\
0 & 0 & s & t\\
0 & 0 & 0 & 0\\
\end{bmatrix},\quad r,s,t,u\in\mathbb{R}, rt - s^2\neq 0
\]
with
\[
U_{11} = \dfrac{1}{rt - s^2}\cdot\begin{bmatrix}
t\cos\alpha - s\sin\alpha & r\sin\alpha - s\cos\alpha\\
-t\sin\alpha - s\cos\alpha & s\sin\alpha + r\cos\alpha
\end{bmatrix},
\]
\[
U_{22} = \begin{bmatrix}
r\cos\alpha -s\sin\alpha & s\cos\alpha - t\sin\alpha\\
r\sin\alpha + s\cos\alpha & s\sin\alpha + t\cos\alpha
\end{bmatrix}
\]
and$^*$
\[
U_{12}(u) = \dfrac{u}{s\sin\alpha - r\cos\alpha}\begin{bmatrix}
s\cos\alpha + r\sin\alpha & s\sin\alpha - r\cos\alpha\\
-t\cos\alpha - s\sin\alpha & s\cos\alpha - t\sin\alpha
\end{bmatrix}.
\]
Furthermore, there exists$^*$ no commuting $H$-polar decomposition.

\newType
\[
X = \begin{bmatrix}
0 & 0 & 0 & 1\\
0 & 0 & q & 0\\
0 & 0 & 0 & 0\\
0 & 0 & 0 & 0
\end{bmatrix},\quad q\in\mathbb{R}, \vert q\vert > 1,\quad H = \begin{bmatrix}
0 & I_2\\
I_2 & 0
\end{bmatrix}.
\]

Every semicommuting $H$-polar decomposition has the form
\[
U = \begin{bmatrix}
U_{11} & U_{12}(u)\\
0 & U_{22}\\
\end{bmatrix},\quad A = \begin{bmatrix}
0 & 0 & r & s\\
0 & 0 & s & t\\
0 & 0 & 0 & 0\\
\end{bmatrix},\quad r,s,t,u\in\mathbb{R}, rt - s^2\neq 0
\]
with
\[
U_{11} = \dfrac{1}{rt - s^2}\cdot\begin{bmatrix}
-s & r\\
qt & -qs
\end{bmatrix},
\]
\[
U_{22} = \dfrac{1}{q}\cdot\begin{bmatrix}
qs & qt\\
r & s
\end{bmatrix}
\]
and
\[
U_{12}(u) = \begin{bmatrix}
u & 0\\
0 & \frac{-qt}{r}u
\end{bmatrix},\quad \text{ if }s = 0
\]
or
\[
U_{12}(u) = \dfrac{qu}{s}\cdot\begin{bmatrix}
r(tr - s^2) & \frac{s}{q}\\
s(tr - s^2) & -t
\end{bmatrix},\quad \text{ if }s\neq 0.
\]
It commutes if and only if $s = \epsilon\sqrt{q}$, $\epsilon\in\llbrace -1,1\rrbrace$ and $r = t = 0$.

\newType
\[
X = \begin{bmatrix}
0 & 0 & \frac{z}{2} & z\\
0 & 0 & -z & 0\\
0 & 0 & 0 & 0\\
0 & 0 & 0 & 0
\end{bmatrix},\quad z\in\llbrace -1,1\rrbrace,\quad H = \begin{bmatrix}
0 & I_2\\
I_2 & 0
\end{bmatrix}.
\]
Every semicommuting $H$-polar decomposition has the form
\[
U = \begin{bmatrix}
U_{11} & U_{12}(u,v)\\
0 & U_{22}\\
\end{bmatrix},\quad A = \begin{bmatrix}
0 & 0 & r & s\\
0 & 0 & s & t\\
0 & 0 & 0 & 0\\
\end{bmatrix},\quad r,s,t,u,v\in\mathbb{R}, rt - s^2\neq 0
\]
with
\[
U_{11} = \dfrac{1}{rt - s^2}\cdot\begin{bmatrix}
z\left(\frac{t}{2}-s\right) & z\left(r -\frac{s}{2}\right)\\
-tz & sz
\end{bmatrix},
\]
\[
U_{22} = \begin{bmatrix}
sz & tz\\
z\left(\frac{s}{2} - r\right) & z\left(\frac{t}{2}-s\right)
\end{bmatrix}
\]
and
\[
U_{12}(u,v) = \begin{bmatrix}
0 & 0\\
u & v
\end{bmatrix},\quad \text{ if }t = 2s\text{ and }s = 2r,
\]
\[
U_{12}(u,v) := U_{12}(u) = \begin{bmatrix}
u & 0\\
\frac{2su}{2r - s} & \frac{4su}{s - 2r}
\end{bmatrix},\quad\text{ if }t = 2s\text{ and }s\neq 2r
\]
or
\[
U_{12}(u,v) := U_{12}(u) = \dfrac{u}{2s - t}\cdot\begin{bmatrix}
2r - s & 2s - t\\
2s & 2t
\end{bmatrix} \quad\text{ if }t\neq 2s.
\]
There exist no commuting $H$-polar decomposition.

\subsection{The complex case; one negative eigenvalue}\label{sec:complex_1}

A complete classification of indecomposable normals in the complex case with $H$ having one negative eigenvalue is given in \cite{gr}. We have the following list of singular types.

\newType
\[
X = \begin{bmatrix}
\lambda_1 & 0\\
0 & \lambda_2
\end{bmatrix},\quad \lambda_1,\lambda_2\in\mathbb{C}, \lambda_1\neq \lambda_2, \lambda_1\lambda_2 = 0,\quad H = Z_2.
\]
No semicommuting $H$-polar decomposition exists.

\newType
\[
X = \begin{bmatrix}
0 & z\\
0 & 0
\end{bmatrix},\quad z\in\mathbb{C}, \vert z \vert = 1,\quad H = Z_2.
\]
Every semicommuting $H$-polar decomposition has the form
\[
U=\begin{bmatrix}
\frac{z}{s} & irsz\\
0 & sz
\end{bmatrix},\qquad A = \begin{bmatrix}
0 & s \\
0 & 0
\end{bmatrix},\quad r,s\in\mathbb{R}, s\neq 0
\]
It commutes if and only if $s \in\llbrace -1,1\rrbrace$.

\newType

\[
X = \begin{bmatrix}
0 & z & r\\
0 & 0 & z\\
0 & 0 & 0
\end{bmatrix},\quad r\in\mathbb{R}, z\in\mathbb{C},\; \vert z\vert = 1,\quad H = Z_3.
\]
Every semicommuting $H$-polar decomposition has the form
\[
U = \begin{bmatrix}
\epsilon z & \epsilon r - sz & u\\
0 & \epsilon z & sz - \epsilon z^2 r\\
0 & 0  &\epsilon z
\end{bmatrix},\qquad A = \begin{bmatrix}
0 & \epsilon & s\\
0 & 0 & \epsilon\\
0 & 0 & 0
\end{bmatrix},\quad \epsilon \in\llbrace -1,1\rrbrace, s\in\mathbb{R}
\]
such that $\Re(u\conj{z}) = \dfrac{-\epsilon\cdot \vert s - \epsilon rz\vert^2}{2}$. 

It commutes if and only if $s = \epsilon r\Re(z)$.

\newType
\[
X = \begin{bmatrix}
0 & 1 & ir\\
0 & 0 & 1\\
0 & 0 & 0
\end{bmatrix},\quad r\in\mathbb{R},\quad H = Z_3.
\]
Every semicommuting $H$-polar decomposition has the form
\[
U = \begin{bmatrix}
\epsilon & \epsilon ir - s & u\\
0 & \epsilon & \epsilon ir + s\\
0 & 0 & \epsilon
\end{bmatrix},\qquad A = \begin{bmatrix}
0 & \epsilon & s\\
0 & 0 & \epsilon\\
0& 0 & 0
\end{bmatrix},\quad \epsilon\in\llbrace -1,1\rrbrace, s\in\mathbb{R}
\]
such that $\Re(u) = \dfrac{-\epsilon(r^2 + s^2)}{2}$.

It commutes if and only if $s = 0$.

\newType
\[
X = \begin{bmatrix}
0 & \cos\alpha & \sin\alpha & 0\\
0 & 0 & 0 &1\\
0 & 0 & 0 & 0\\
0 &0 & 0 & 0
\end{bmatrix},\quad 0 < \alpha \leq \dfrac{\pi}{2},\quad H = \begin{bmatrix}
0 & 0 & 0 & 1\\
0 & 1 & 0 & 0\\
0 & 0 & 1 & 0\\
1 & 0 & 0 & 0
\end{bmatrix}.
\]
No semicommuting $H$-polar decomposition exists.

\subsection{The complex case; two negative eigenvalues}\label{sec:complex_2}

A complete classification of indecomposable normals in the complex case with $H$ having two negative eigenvalues is given in \cite[Section 3]{HS}. We only consider the following list of singular types.
\newType
\[
\begin{bmatrix}
0 & 1 & 0 & 0\\
0 & 0 & 0 & z\\
0 & 0 & 0 & 0\\
0 & 0 & 0 & 0
\end{bmatrix},\quad z\in\mathbb{C},\vert z\vert = 1,\quad H = Z_4.
\]
No semicommuting $H$-polar decomposition exists.

\newType
\[
X = \begin{bmatrix}
0 & 1 & -2ir\Im(z) & 0 & 0 & 0\\
0 & 0 & z & r & 0 & \left(2r^2\Im(z)^2 - \frac{s^2}{2} +it\right)z^2\\
0 & 0 & 0 & z & 0 & 0\\
0 & 0 & 0 & 0 & 0 & z^2\\
0 & 0 & 0 & 0 & 0 & s\\
0 & 0 & 0 & 0 & 0 & 0
\end{bmatrix},\quad \begin{array}{ll}r,s,t\in\mathbb{R}, \\s > 0, \\z\in\mathbb{C}, \\\vert z\vert = 1, \\0 < \arg z<\pi\end{array},\quad H = \begin{bmatrix}
0 & 0 & 0 & 1\\
0 & Z_3 & 0 & 0\\
0 & 0 & 1 & 0\\
1 & 0 & 0 & 0
\end{bmatrix}.
\]
No semicommuting $H$-polar decomposition exists.

\newType
\[
X = \begin{bmatrix}
0 & 0 & 1 & 0 & 0\\
0 & 0 & 0 & 1 & 0\\
0 & 0 & 0 & z & 0\\
0 & 0 & 0 & 0 & 0\\
0 & 0 & 0 & 0 & 0
\end{bmatrix},\quad z\in\mathbb{C}, \vert z\vert = 1,\quad H = \begin{bmatrix}
0 & 0 & I_2\\
0 & 1 & 0\\
I_2 & 0 & 0
\end{bmatrix}.
\]

The matrix may be decomposed by
\[
U = \sqrt{z}\begin{bmatrix}
1 & 0 & 0 & 0 & 0\\
0 & -\frac{1}{2}\conj{z} & \conj{z} & 0 & \conj{z}\\
0 & -1 & 1 & 0 & 0\\
0 & 0 & 0 & 1 & 0\\
0 & \conj{z} & 0 & 0 & 0
\end{bmatrix}\quad\text{ and }\quad A = \sqrt{z}\begin{bmatrix}
0 & 0 & \conj{z} & 0 & 0\\
0 & 0 & 0 & 0 & 0\\
0 & 0 & 0 & 1 & 0\\
0 & 0 & 0 & 0 & 0\\
0 & 0 & 0 & 0 & 0
\end{bmatrix}
\]
into a semicommuting $H$-polar decomposition. (There may be other possible decompositions!)

No commuting $H$-polar decomposition exists.

\newType
\[
X = \begin{bmatrix}
0 & 0 & 1 & 0 & 0 & 0 & 0\\
0 & 0 & 0 & 1 & 0 & 0 & 0\\
0 & 0 & 0 & 0 & 0 & -z_1\conj{z_2}\cos\alpha & \sin\alpha\cos\beta\\
0 & 0 & 0 & 0 & 0 & z_1\sin\alpha & z_2\cos\alpha\cos\beta\\
0 & 0 & 0 & 0 & 0 & 0 & \sin\beta\\
0 & 0 & 0 & 0 & 0 & 0 & 0\\
0 & 0 & 0 & 0 & 0 & 0 & 0
\end{bmatrix},\quad \begin{array}{ll}z_1,z_2\in\mathbb{C}, \\\vert z_1\vert = \vert z_2\vert = 1\\ 0<\alpha,\beta\leq \dfrac{\pi}{2}\end{array},\quad H = \begin{bmatrix}
0 & 0 & I_2\\
0 & I_3 & 0\\
I_2 & 0 & 0
\end{bmatrix}.
\]
No semicommuting $H$-polar decomposition exists.

\newType
\[
X = \begin{bmatrix}
0 & 0 & 1 & 0 & 0 & 0 & 0 & 0\\
0 & 0 & 0 & 1 & 0 & 0 & 0 & 0\\
0 & 0 & 0 & 0 & 0 & 0 & -z_1\conj{z_2}\sin\alpha\cos\beta & \cos\alpha\cos\gamma\\
0 & 0 & 0 & 0 & 0 & 0 & z_1\cos\alpha\cos\beta & z_2\sin\alpha\cos\gamma\\
0 & 0 & 0 & 0 & 0 & 0 & \sin\beta & 0\\
0 & 0 & 0 & 0 & 0 & 0 & 0 & \sin\gamma\\
0 & 0 & 0 & 0 & 0 & 0 & 0 & 0\\
0 & 0 & 0 & 0 & 0 & 0 & 0 & 0
\end{bmatrix},\quad\begin{array}{ll}
z_1,z_2\in\mathbb{C}\\
\vert z_1\vert = \vert z_2\vert = 1\\[6pt]
0\leq \alpha < \dfrac{\pi}{2}\\
0 < \beta < \gamma\leq \dfrac{\pi}{2}
\end{array},\quad H = \begin{bmatrix}
0 & 0 & I_2\\
0 & I_4 & 0\\
I_2 & 0 & 0
\end{bmatrix}.
\]
No semicommuting $H$-polar decomposition exists.

\section{Proofs}\label{sec*:Appendix}

In every case, we assume $X = UA$ to be a semicommuting $H$-polar decomposition. Furthermore, we denote $U = \begin{bmatrix}
u_{i,j}
\end{bmatrix}_{i,j}$ and $A = [a_{i,j}]_{i,j}$. For the proofs, we use the following equations:
\begin{eqnarray}
UA = X\label{eq:UA}\\
A^{[*]} = A\label{eq:selfadj}\\
\ker A = \ker X\label{eq:kernel}\\
A^ 2 = X^ {[*]}X\label{eq:A2}\\
U^*HU = H\label{eq:unitary}\\
X^*HU = U^*HX\label{eq:X_U_H}\\
AX = XA\label{eq_semi}
\end{eqnarray}
Equations \eqref{eq:UA},\eqref{eq:selfadj} and \eqref{eq:unitary} follow from the definition of an $H$-polar decomposition. Equations \eqref{eq:kernel}, \eqref{eq:A2} and \eqref{eq:X_U_H} are necessary conditions as one can easily check. The last equation \eqref{eq_semi} only applies in our setting of semicommuting $H$-polar decompositions.

Note that we write $\cdots$ for terms we do not need to calculate explicitly.\\

Some of the calculations have been done by MAPLE or Sage. These calculations* are asterisked. 
\stepcounter{subsection}
\setcounter{subsection}{2}
\newTypeProof
Special case of \Pref{4.I}.

\newTypeProof
Special case of \Pref{4.II}.

\newTypeProof
Special case of \Pref{4.III}.

\newTypeProof
Recall
\[
X = \begin{bmatrix}
0 & 1 & r\\
0 & 0 & -1\\
0 & 0 & 0
\end{bmatrix},\quad r\in\mathbb{R},\quad H = Z_3.
\]
Equations \eqref{eq:selfadj} and \eqref{eq:kernel} give
\[
A = \begin{bmatrix}
0 & a_{12} & a_{13}\\
0 & a_{22} & a_{12}\\
0 & 0 & 0
\end{bmatrix},\quad a_{12},a_{13},a_{22}\in\mathbb{R}
\]
and thus
\[
AX = \begin{bmatrix}
0 & 0 & -a_{12}\\
0 & 0 & -a_{22}\\
0 & 0 & 0
\end{bmatrix} \quad\text{ and }\quad XA = \begin{bmatrix}
0 & a_{22} & a_{12}\\
0 & 0 & 0\\
0 & 0 & 0
\end{bmatrix}.
\]
With \eqref{eq_semi}, this gives $a_{12} = a_{22} = 0$, in contradiction to \eqref{eq:kernel}. 

\newTypeProof
Recall
\[
X = \begin{bmatrix}
0 & 1 & 0 & 0\\
0 & 0 & 0 & \cos\alpha\\
0 & 0 & 0 & \sin\alpha\\
0 & 0 & 0 & 0
\end{bmatrix},\quad 0< \alpha<\pi, \quad H = \begin{bmatrix}
0 & 0 & 1\\
0 & I_2 & 0\\
1 & 0 & 0
\end{bmatrix}.
\]
Equations \eqref{eq:selfadj} and \eqref{eq:kernel} provide
\[
A = \begin{bmatrix}
0 & a_{12} & 0 & a_{14}\\
0 & a_{22} & 0 & a_{12}\\
0 & 0 & 0 & 0\\
0 & 0 & 0 & 0
\end{bmatrix},\quad a_{12},a_{14},a_{22}\in\mathbb{R}.
\]
This gives
\[
AX = \begin{bmatrix}
0 & 0 & 0 & a_{12}\cos\alpha\\
0 & 0 & 0 & a_{22}\cos\alpha\\
0 & 0 & 0 & 0\\
0 & 0 & 0 & 0
\end{bmatrix}\quad\text{ and }\quad XA = \begin{bmatrix}
0 & a_{22} & 0 & a_{12}\\
0 & 0 & 0 & 0\\
0 & 0 & 0 & 0\\
0 & 0 & 0 & 0
\end{bmatrix}.
\]
With \eqref{eq_semi}, this gives $a_{22} = 0$, $a_{12} = a_{12}\cos\alpha$ and thus $a_{12} = 0$. This contradicts \eqref{eq:kernel}.

\stepcounter{subsection}
\newTypeProof
Recall
\[
X = \begin{bmatrix}
0 & 1 & 0 & 0\\
0 & 0 & z & 0\\
0 & 0 & 0 & 1\\
0 & 0 & 0 & 0
\end{bmatrix},\quad z\in\llbrace -1,1\rrbrace,\quad H = Z_4.
\]
Equations \eqref{eq:selfadj} and \eqref{eq:kernel} give
\[
A = \begin{bmatrix}
0 & a_{12} & a_{13} & a_{14}\\
0 & a_{22} & a_{23} & a_{13}\\
0 & a_{32} & a_{22} & a_{12}\\
0 & 0 & 0 & 0
\end{bmatrix}\in\mathbb{R}^{4,4}.
\]
It then holds
\[
AX = \begin{bmatrix}
0 & 0 & a_{12}z & a_{13}\\
0 & 0 & a_{22}z & a_{23}\\
0 & 0 & a_{32}z & a_{22}\\
0 & 0 & 0 & 0
\end{bmatrix}\quad\text{ and }\quad XA = \begin{bmatrix}
0 & a_{22} & a_{23} & a_{13}\\
0 & a_{32}z & a_{22}z & a_{12}z\\
0 & 0 & 0 & 0\\
0 & 0 & 0 & 0
\end{bmatrix}.
\]
Since \eqref{eq_semi}, we get $a_{22} = a_{32} = 0$ and $a_{23} = za_{12}$. Furthermore,
\[
A^2 = \begin{bmatrix}
0 & 0 & za_{12}^2 & 2a_{12}a_{13}\\
0 & 0 & 0 & za_{12}^2\\
0 & 0 & 0 & 0\\
0 & 0 & 0 & 0
\end{bmatrix} \overset{\eqref{eq:A2}}{=} \begin{bmatrix}
0 & 0 & z & 0\\
0 & 0 & 0 & z\\
0 & 0 & 0 & 0\\
0 & 0 & 0 & 0
\end{bmatrix}
\]
and thus $a_{12}\in\llbrace -1,1\rrbrace$ and $a_{13} = 0$. With
\[
UA = \begin{bmatrix}
0 & a_{12}u_{11} & a_{12}zu_{12} & a_{14}u_{11} + a_{12}u_{13}\\
0 & a_{12}u_{21} & a_{12}zu_{22} & a_{14}u_{21} + a_{12}u_{23}\\
0 & a_{12}u_{31} & a_{12}zu_{32} & a_{14}u_{31} + a_{12}u_{33}\\
0 & a_{12}u_{41} & a_{12}zu_{42} & a_{14}u_{41} + a_{12}u_{43}
\end{bmatrix} \overset{\eqref{eq:UA}}{=} \begin{bmatrix}
0 & 1 & 0 & 0\\
0 & 0 & z & 0\\
0 & 0 & 0 & 1\\
0 & 0 & 0 & 0
\end{bmatrix}
\]
gives us first $u_{11} = u_{22} = u_{33} = a_{12}$, $u_{21} = u_{31} = u_{41} = u_{32} = u_{42} = u_{23} = u_{43} = 0$ and $u_{13} = -a_{14}$. Finally,
\[
U^THU = \begin{bmatrix}
0 & 0 & 0 & a_{12}u_{44}\\
\cdots & 0 & 1 & a_{12}u_{34}\\
\cdots & \cdots & 0 & -a_{14}u_{44} + a_{12}u_{24}\\
\cdots & \cdots & \cdots & 2(u_{14}u_{44} + u_{24}u_{34})
\end{bmatrix} \overset{\eqref{eq:unitary}}{=} \begin{bmatrix}
0 & 0 & 0 & 1\\
0 & 0 & 1 & 0\\
0 & 1 & 0 & 0\\
1 & 0 & 0 & 0
\end{bmatrix}
\]
gives us$u_{44} = a_{12}$, $u_{34} = 0$, $u_{24} = a_{14}$ and $u_{14} = 0$. One can then easily check that
\[
U = \begin{bmatrix}
\epsilon & 0 & -s & 0\\
0 & \epsilon & 0 & s\\
0 & 0 & \epsilon & 0\\
0 & 0 & 0 & \epsilon
\end{bmatrix},\qquad A = \begin{bmatrix}
0 & \epsilon & 0 & s\\
0 & 0 & \epsilon z & 0\\
0 & 0 & 0 &\epsilon\\
0 & 0 & 0 & 0
\end{bmatrix},\quad \epsilon\in\llbrace -1,1\rrbrace, s\in\mathbb{R}.
\]
is indeed a semicommuting $H$-polar decomposition. Since
\[
AU = \begin{bmatrix}0 & 1 & 0 & 2\epsilon s\\
0 & 0 & z & 0\\
0 & 0 & 0 & 1\\
0 & 0 & 0 & 0\end{bmatrix},
\]
it commutes if and only if $s = 0$.

\newTypeProof
In an analogous manner to \Pref{3.I}.

\newTypeProof
Recall
\[
X = \begin{bmatrix}
0 & 1 & -r & 0 & s\\
0 & 0 & 1 & r & 0\\
0 & 0 & 0 & -1 & -r\\
0 & 0 & 0 & 0 & -1\\
0 & 0 & 0 & 0 & 0
\end{bmatrix},\quad r,s\in\mathbb{R},\quad H = Z_5.
\]
With \eqref{eq:selfadj} and \eqref{eq:kernel}, we get
\[
A = \begin{bmatrix}
0 & a_{12} & a_{13} & a_{14} & a_{15}\\
0 & a_{22} & a_{23} & a_{24} & a_{14}\\
0 & a_{32} & a_{33} & a_{23} & a_{13}\\
0 & a_{42} & a_{32} & a_{22} & a_{12}\\
0 & 0 & 0 & 0 & 0
\end{bmatrix}\in\mathbb{R}^{5,5}.
\]
It then holds
\[
AX = \begin{bmatrix}
0 & 0 & a_{12} & ra_{12} - a_{13} & -(ra_{13} + a_{14})\\
0 & 0 & a_{22} & ra_{22} - a_{23} & -(ra_{23} + a_{24})\\
0 & 0 & a_{32} & ra_{32} - a_{33} & -(ra_{33} + a_{23})\\
0 & 0 & a_{42} & ra_{42} - a_{32} & -(ra_{43} + a_{22})\\
0 & 0 & 0 & 0 & 0
\end{bmatrix}
\]
and
\[
XA = \begin{bmatrix}
0 & a_{22} - ra_{32} & a_{23} - ra_{33} & a_{24} - ra_{23} & a_{14} -ra_{13}\\
0 & a_{32} + ra_{42} & a_{33} + ra_{32} & a_{23} + ra_{22} & a_{13} + ra_{12}\\
0 & - a_{42} & -a_{32} & -a_{22} & -a_{12}\\
0 & 0 & 0 & 0 & 0\\
0 & 0 & 0 & 0 & 0
\end{bmatrix}.
\]
With \eqref{eq_semi}, this gives $a_{22} = a_{32} = a_{42} = a_{33} = a_{12} = 0$, in contradiction to \eqref{eq:kernel}. 

\newTypeProof
Special case of \Pref{5.I}.

\newTypeProof
Recall
\[
X = \begin{bmatrix}
0 & 1 & z & 0\\
0 & 0 & 0 & r\\
0 & 0 & 0 & \frac{z}{r}\\
0 & 0 & 0 & 0
\end{bmatrix},\quad z\in\llbrace -1,1\rrbrace, r\in\mathbb{R}, \vert r\vert > 1,\quad H = Z_4.
\]
Equations \eqref{eq:selfadj} and \eqref{eq:kernel} give
\[
A = \begin{bmatrix}
0 & a_{12} & a_{13} & a_{14}\\
0 & a_{22} & a_{23} & a_{13}\\
0 & a_{32} & a_{22} & a_{12}\\
0 & 0 & 0 & 0
\end{bmatrix}.
\]
It then holds
\[
AX = \begin{bmatrix}
0 & 0 & 0 & ra_{12} + \frac{za_{13}}{r}\\
0 & 0 & 0 & ra_{22} + \frac{za_{23}}{r}\\
0 & 0 & 0 & ra_{32} + \frac{za_{22}}{r}\\
0 & 0 & 0 & 0
\end{bmatrix}\quad\text{ and }\quad XA = \begin{bmatrix}
0 & a_{22} + za_{32} & a_{23} + za_{22} & a_{13} + za_{12}\\
0 & 0 & 0 & 0\\
0 & 0 & 0 & 0\\
0 & 0 & 0 & 0
\end{bmatrix}.
\]
With \eqref{eq_semi}, this gives $a_{32} = a_{23} = a_{22} = 0$ and $a_{13} = ra_{12}$. Using
\[
UA = \begin{bmatrix}
\cdots & a_{12}u_{11} & ra_{12}u_{11} & \cdots\\
\cdots & \cdots & \cdots & \cdots\\
\cdots & \cdots & \cdots & \cdots\\
\cdots & \cdots & \cdots & \cdots
\end{bmatrix}\overset{\eqref{eq:UA}}{=}\begin{bmatrix}
0 & 1 & z & 0\\
0 & 0 & 0 & r\\
0 & 0 & 0 & \frac{z}{r}\\
0 & 0 & 0 & 0
\end{bmatrix}
\]
and \eqref{eq:kernel} for $a_{12}\neq 0$, we get $u_{11} = \dfrac{1}{a_{12}}$ and thus
\[
z = ra_{12}\cdot\dfrac{1}{a_{12}} = r.
\] 
This contradicts the assumption $\vert r\vert > 1$. 

\newTypeProof
Recall 
\[
X = \begin{bmatrix}
0 & 1 & 0 & \frac{r^2}{2} & 0\\
0 & 0 & 0 & z & 0\\
0 & 0 & 0 & 0 & r\\
0 & 0 & 0 & 0 & 1\\
0 & 0 & 0 & 0 & 0
\end{bmatrix},\quad z\in\llbrace -1,1\rrbrace, r > 0,\quad H = Z_5.
\]
Equations \eqref{eq:selfadj} and \eqref{eq:kernel} give
\[
A = \begin{bmatrix}
0 & a_{12} & 0 & a_{14} & a_{15}\\
0 & a_{22} & 0 & a_{24} & a_{14}\\
0 & 0 & 0 & 0 & 0\\
0 & a_{42} & 0 & a_{22} & a_{12}\\
0 & 0 & 0 & 0 & 0
\end{bmatrix}\in\mathbb{R}^{5,5}
\]
We obtain
\[
AX = \begin{bmatrix}
0 & 0 & 0 & za_{12} & a_{14}\\
0 & 0 & 0 & za_{22} & a_{24}\\
0 & 0 & 0 & 0 & 0\\
0 & 0 & 0 & za_{42} & a_{22}\\
0 & 0 & 0 & 0 & 0
\end{bmatrix},\quad XA = \begin{bmatrix}
0 & a_{22} + \frac{r^2a_{42}}{2} & 0 & a_{24} + \frac{r^2a_{22}}{2} & a_{14} + \frac{r^2a_{12}}{2}\\
0 & za_{42} & 0 & za_{22} & za_{12}\\
0 & 0 & 0 & 0 & 0\\
0 & 0 & 0 & 0 & 0\\
0 & 0 & 0 & 0 & 0
\end{bmatrix}
\]
and hence with \eqref{eq_semi} $a_{22} = a_{42} = a_{12} = 0$. This then contradicts \eqref{eq:kernel}. 

\newTypeProof
Recall 
\[
X = \begin{bmatrix}
0 & 1 & 0 & 0 & 0 & 0\\
0 & 0 & 1 & 0 & 0 & -\frac{r^2}{2}\\
0 & 0 & 0 & 1 & 0 & 0\\
0 & 0 & 0 & 0 & 0 & 1\\
0 & 0 & 0 & 0 & 0 & r\\
0 & 0 & 0 & 0 & 0 & 0
\end{bmatrix},\quad r > 0,\quad H = \begin{bmatrix}
0 & 0 & 0 & 1\\
0 & Z_3 & 0 & 0\\
0 & 0 & 1 & 0\\
1 & 0 & 0 & 0
\end{bmatrix}.
\]
Equations \eqref{eq:selfadj} and \eqref{eq:kernel} give
\[
A = \begin{bmatrix}
0 & a_{12} & a_{13} & a_{14} & 0 & a_{16}\\
0 & a_{22} & a_{23} & a_{24} & 0 & a_{14}\\
0 & a_{32} & a_{33} & a_{23} & 0 & a_{13}\\
0 & a_{42} & a_{32} & a_{22} & 0 & a_{12}\\
0 & 0 & 0 & 0 & 0 & 0\\
0 & 0 & 0 & 0 & 0 & 0\\
\end{bmatrix}.
\]
We obtain
\[
AX = \begin{bmatrix}
0 & 0 & a_{12} & a_{13} & 0 & a_{14} - \frac{r^2a_{12}}{2}\\
0 & 0 & a_{22} & a_{23} & 0 & \cdots\\
0 & 0 & a_{32}  & a_{33} & 0 & \cdots\\
0 & 0 & a_{42} & a_{32} & 0 & \cdots\\
0 & 0 & 0 & 0 & 0 & 0\\
0 & 0 & 0 & 0 & 0 & 0
\end{bmatrix},\quad XA = \begin{bmatrix}
0 & a_{22} & a_{23} & a_{24} & 0 & a_{14}\\
0 & a_{32} & a_{33} & a_{23} & 0 & \cdots\\
0 & a_{42} & a_{32} & a_{22} & 0 & \cdots\\
0 & 0 & 0 & 0 & 0 & 0\\
0 & 0 & 0 & 0 & 0 & 0\\
0 & 0 & 0 & 0 & 0 & 0
\end{bmatrix}
\]
and thus with \eqref{eq_semi} $a_{22} = a_{32} = a_{42} = a_{12} = 0$ in contradiction to \eqref{eq:kernel}. 

\newTypeProof
Recall
\[
X = \begin{bmatrix}
0 & 1 & -2r & 0 & 0 & 0\\
0 & 0 & 1 & r & 0 & -2r^2 + \frac{s^2}{2}\\
0 & 0 & 0 & -1 & 0 & 0\\
0 & 0 & 0 & 0 & 0 & -1\\
0 & 0 & 0 & 0 & 0 & s\\
0 & 0 & 0 & 0 & 0 & 0
\end{bmatrix},\quad r,s\in\mathbb{R}, s > 0,\quad H = \begin{bmatrix}
0 & 0 & 0 & 1\\
0 & Z_3 & 0 & 0\\
0 & 0 & 1 & 0\\
1 & 0 & 0 & 0
\end{bmatrix}.
\]
Equations \eqref{eq:selfadj} and \eqref{eq:kernel} provide
\[
A = \begin{bmatrix}
0 & a_{12} & a_{13} & a_{14} & 0 & a_{16}\\
0 & a_{22} & a_{23} & a_{24} & 0 & a_{14}\\
0 & a_{32} & a_{33} & a_{23} & 0 & a_{13}\\
0 & a_{42} & a_{32} & a_{22} & 0 & a_{12}\\
0 & 0 & 0 & 0 & 0 & 0\\
0 & 0 & 0 & 0 & 0 & 0\\
\end{bmatrix}.
\]
We then obtain
\[
AX = \begin{bmatrix}
0 & 0 & a_{12} & \cdots & 0 & \cdots\\
0 & 0 & a_{22} & ra_{22} - a_{23} & 0 & \cdots\\
0 & 0 & a_{32} & ra_{32} - a_{33} & 0 & \left(-2r^2 + \frac{s^2}{2}\right)a_{32} - a_{23}\\
0 & 0 & a_{42} & \cdots & 0 & \cdots\\
0 & 0 & 0 & 0 & 0 & 0\\
0 & 0 & 0 & 0 & 0 & 0
\end{bmatrix},
\]
\[
XA = \begin{bmatrix}
0 & a_{22} - 2ra_{32} & \cdots & \cdots & 0 & \cdots\\
0 & a_{32}  + ra_{42} & a_{33} + ra_{32} & a_{23} + ra_{22} & 0 & \cdots\\
0 & -a_{42} & -a_{32} & -a_{22} & 0 & -a_{12}\\
0 & 0 & 0 & 0 & 0 & 0\\
0 & 0 & 0 & 0 & 0 & 0\\
0 & 0 & 0 & 0 & 0 & 0
\end{bmatrix}
\]
and thus with \eqref{eq_semi} $a_{42} = a_{32}  = a_{22} = a_{33} = 0$ and hence $a_{12} = a_{23} = 0$. This then contradicts \eqref{eq:kernel}.

\subsubsection*{Types \hyperref[type:3.IX]{3.IX}-\hyperref[type:3.XI]{3.XI}}
\label{p:3.IX}
\label{p:3.X}
\label{p:3.XI}
\label{p:3.XII}
\label{p:5.XII} 
Recall
\[
X = \begin{bmatrix}
0 & X_1\\
0 & 0
\end{bmatrix},\quad X_1\in\mathbb{F}^{2,2}, \det X_1\neq 0,\quad H = \begin{bmatrix}
0 & I_2\\
I_2 & 0
\end{bmatrix}.
\]
We write, using \eqref{eq:selfadj} and \eqref{eq:kernel},
\[
U = \begin{bmatrix}
U_{11} & U_{12}\\
U_{21} & U_{22}
\end{bmatrix}\quad\text{ and }\quad A = \begin{bmatrix}
0 & A_1\\
0 & 0
\end{bmatrix},\quad U_{11},U_{12}, U_{21},U_{22},A_1\in \mathbb{F}^{2,2}
\]
with $A_1^* = A_1$ and $\det A_1\neq 0$. With
\[
UA = \begin{bmatrix}
0 & U_{11}A_1\\
0 & U_{21}A_1
\end{bmatrix}\overset{\eqref{eq:UA}}{=}\begin{bmatrix}
0 & X_1\\
0 & 0
\end{bmatrix},
\]
this gives
\[
U_{21} = 0\quad\text{ and }\quad U_{11} = X_1A_1^ {-1}.
\]
Now,
\[
U^*HU = \begin{bmatrix}
0 & U_{11}^*U_{22}\\
\cdots & U_{12}^*U_{22} + U_{22}^*U_{12}
\end{bmatrix}\overset{\eqref{eq:unitary}}{=}\begin{bmatrix}
0 & I_2\\
I_2 & 0
\end{bmatrix},
\]
and thus
\[
U_{22} = U_{11}^{-*} = X_1^{-*}A_1.
\]
Furthermore, $U_{12}^*U_{22}$ has to be skew-Hermitian. One can easily verify that those conditions are sufficient for a semicommuting $H$-polar decomposition.

Since
\[
AU = \begin{bmatrix}
0 & A_1U_{22}\\
0 & 0
\end{bmatrix},
\]
it commutes if and only if $A_1 = X_1A_1^{-1}X_1^*$.

The details for every special cases can be determined by hand or with the help of a computer.

\stepcounter{subsection}
\newTypeProof
Recall
\[
X = \begin{bmatrix}
\lambda_1 & 0\\
0 & \lambda_2
\end{bmatrix},\quad \lambda_1,\lambda_2\in\mathbb{C}, \lambda_1\neq \lambda_2, \lambda_1\lambda_2 = 0,\quad H = Z_2.
\]
The following two cases are exhaustive:
\begin{description}
	\item[Case 1: $\lambda_1 = 0, \lambda_2\neq 0$.] Equations \eqref{eq:selfadj} and \eqref{eq:kernel} give
	\[
	A = \begin{bmatrix}
	0 & a_{12}\\
	0 & 0
	\end{bmatrix},\quad a_{12}\in\mathbb{R}\setminus\llbrace 0\rrbrace
	\]
	and thus
	\[
	AX = \begin{bmatrix}
	0 & \lambda_2a_{12}\\
	0 & 0
	\end{bmatrix}\neq 0 = XA
	\]
	in contradiction to \eqref{eq_semi}. 
	
	\item[Case 2: $\lambda_1\neq 0,\lambda_2 = 0$.] Similarly to the above, we get
	\[
	A = \begin{bmatrix}
	0 & 0\\
	a_{21} & 0
	\end{bmatrix},\quad a_{21}\in\mathbb{R}\setminus\llbrace 0\rrbrace
	\]
	and thus
	\[
	AX = \begin{bmatrix}
	0 & 0\\
	\lambda_1a_{21} & 0
	\end{bmatrix} \neq 0 = XA
	\]
	in contradiction to \eqref{eq_semi}. 
\end{description}

\newTypeProof
Recall
\[
X = \begin{bmatrix}
0 & z\\
0 & 0
\end{bmatrix},\quad z\in\mathbb{C}, \vert z \vert = 1,\quad Z = H_2.
\]
It follows from \eqref{eq:selfadj} and \eqref{eq:kernel}, that
\[
A = \begin{bmatrix}
0 & a_{12}\\
0 & 0
\end{bmatrix},\quad a_{12}\in\mathbb{R}\setminus\llbrace 0\rrbrace.
\]
With
\[
UA = \begin{bmatrix}
0 & a_{12}u_{11}\\
0 & a_{12}u_{21}
\end{bmatrix} \overset{\eqref{eq:UA}}{=} \begin{bmatrix}
0 & z\\
0 & 0
\end{bmatrix},
\]
it follows that $u_{11} = \dfrac{z}{a_{12}}$ and $u_{21} = 0$. This then gives
\[
U^*HU = \begin{bmatrix}
0 & \dfrac{\conj{z}u_{22}}{a_{12}}\\[14pt]
\cdots & 2\Re(u_{12}\conj{u_{22}})
\end{bmatrix}\overset{\eqref{eq:unitary}} = \begin{bmatrix}
0 & 1 \\
1 & 0
\end{bmatrix}
\]
and thus 
\[
u_{22} = \dfrac{a_{12}}{\conj{z}} = za_{12}\quad\text{ and }\quad \Re(u_{12}\conj{u_{22}}) = 0.
\]
This implies $u_{12} = iru_{22} = irza_{12}$ for some $r\in\mathbb{R}$. One can now easily check that
\[
U=\begin{bmatrix}
\frac{z}{s} & irsz\\
0 & sz
\end{bmatrix}\qquad A = \begin{bmatrix}
0 & s \\
0 & 0
\end{bmatrix},\quad r,s\in\mathbb{R}, s\neq 0
\]
is indeed a semicommuting $H$-polar decomposition. Since
\[
AU = \begin{bmatrix}
0 & s^2z\\
0 & 0
\end{bmatrix} = s^2X,
\]
it commutes if and only if $s\in\llbrace -1,1\rrbrace$.

\newTypeProof
Recall
\[
X = \begin{bmatrix}
0 & z & r\\
0 & 0 & z\\
0 & 0 & 0
\end{bmatrix},\quad r\in\mathbb{R}, z\in\mathbb{C},\; \vert z\vert = 1,\quad H = Z_3.
\]
From equations \eqref{eq:selfadj} and \eqref{eq:kernel}, it follows
\[
A = \begin{bmatrix}
0 & a_{12} & a_{13}\\
0 & a_{22} & \conj{a_{12}}\\
0 & 0 & 0
\end{bmatrix},\quad a_{13},a_{22}\in\mathbb{R}, a_{12}\in\mathbb{C}
\]
and hence
\[
AX = \begin{bmatrix}
0 & 0 & za_{12}\\
0 & 0 & za_{22}\\
0 & 0 & 0
\end{bmatrix}\quad\text{ and }\quad XA = \begin{bmatrix}
0 & za_{22} & z\conj{a_{12}}\\
0 & 0 & 0\\
0 & 0 & 0
\end{bmatrix}.
\]
With \eqref{eq_semi}, this gives $a_{22} = 0$ and $a_{12}\in\mathbb{R}$. Since
\[
A^2 = \begin{bmatrix}
0 & 0 & a_{12}^2\\
0 & 0 & 0\\
0 & 0 & 0
\end{bmatrix} \overset{\eqref{eq:A2}}{=} \begin{bmatrix}
0 & 0 & 1\\
0 & 0 & 0\\
0 & 0 & 0
\end{bmatrix} = X^{[*]}X,
\]
we obtain $a_{12}\in\llbrace -1,1\rrbrace$. Furthermore,
\[
UA = \begin{bmatrix}
0 & a_{12}u_{11} & a_{13}u_{11} + a_{12}u_{12}\\
0 & a_{12}u_{21} & a_{13}u_{21} + a_{12}u_{22}\\
0 & a_{12}u_{31} & a_{13}u_{31} + a_{12}u_{32}
\end{bmatrix} \overset{\eqref{eq:UA}}{=} \begin{bmatrix}
0 & z & r\\
0 & 0 & z\\
0 & 0 & 0
\end{bmatrix}.
\]
This gives first $u_{21} = u_{31} = 0$ and $u_{11} = \dfrac{z}{a_{12}} = za_{12}$ and then $u_{32} = 0$, $u_{22} = \dfrac{z}{a_{12}} = za_{12}$ and $u_{12} = ra_{12} - za_{13}$. We obtain
\[
U^*HU = \begin{bmatrix}
0 & 0 & \conj{z}a_{12}u_{33}\\
\cdots & 1 & (ra_{12} - \conj{z}a_{13})u_{33}  + \conj{z}a_{12}u_{23}\\
\cdots & \cdots & 2\Re(u_{13}\conj{u_{33}}) + \vert u_{23}\vert^2
\end{bmatrix} \overset{\eqref{eq:unitary}}{=} \begin{bmatrix}
0 & 0 & 1\\
0 & 1 & 0\\
1 & 0 & 0
\end{bmatrix}
\]
or
\[
u_{33} = za_{12},\quad u_{23} = z(a_{13} - rza_{12})\quad\text{ and }\quad \Re(u_{13}\conj{z}) = \dfrac{-a_{12}\vert a_{13} - rza_{12}\vert^2}{2}.
\]
One can easily check that
\[
U = \begin{bmatrix}
\epsilon z & \epsilon r - sz & u\\
0 & \epsilon z & sz - \epsilon z^2 r\\
0 & 0  &\epsilon z
\end{bmatrix}\qquad A = \begin{bmatrix}
0 & \epsilon & s\\
0 & 0 & \epsilon\\
0 & 0 & 0
\end{bmatrix},\quad \begin{array}{l}\epsilon \in\llbrace -1,1\rrbrace, s\in\mathbb{R}\\ \Re(u\conj{z}) = \dfrac{-\epsilon\vert s - \epsilon rz\vert^2}{2}\end{array}
\]
is indeed a valid semicommuting $H$-polar decomposition. Since
\[
AU = \begin{bmatrix}
0 & z & 2\epsilon sz - rz^2\\
0 & 0 & z\\
0 & 0 & 0
\end{bmatrix},
\]
it commutes if and only if $2\epsilon sz - rz^2 = r$, i.e.
\[
s = \epsilon r\Re(z).
\]

\newTypeProof
Recall
\[
X = \begin{bmatrix}
0 & 1 & ir\\
0 & 0 & 1\\
0 & 0 & 0
\end{bmatrix},\quad r\in\mathbb{R},\quad H = Z_3.
\]
As for \Pref{4.III}, we get
\[
A = \begin{bmatrix}
0 & \epsilon & s\\
0 & 0 & \epsilon\\
0 & 0 & 0
\end{bmatrix},\quad \epsilon\in\llbrace -1,1\rrbrace, s\in\mathbb{R}.
\]
The equality
\[
UA = \begin{bmatrix}
0 & \epsilon u_{11} & su_{11} + \epsilon u_{12}\\
0 & \epsilon u_{21} & su_{21} + \epsilon u_{22}\\
0 & \epsilon u_{31} & su_{31} + \epsilon u_{32}\\
\end{bmatrix} \overset{\eqref{eq:UA}}{=} \begin{bmatrix}
0 & 1 & ir\\
0 & 0 & 1\\
0 & 0 & 0
\end{bmatrix}
\]
then gives us first $u_{11} = \epsilon$ and $u_{21} = u_{31} = 0$ and then $u_{12} = \epsilon ir -s$, $u_{22} = 0$ and finally $u_{32} = 0$. This then gives
\[
U^*HU = \begin{bmatrix}
0 & 0 & \epsilon u_{33}\\
\cdots & 1 & \epsilon u_{23} - (\epsilon ir + s)u_{33}\\
\cdots & \cdots & 2\Re(u_{13}\conj{u_{33}}) + \vert u_{23}\vert^2
\end{bmatrix} \overset{\eqref{eq:unitary}}{=}\begin{bmatrix}
0 & 0 & 1\\
0 & 1 & 0\\
1 & 0 & 0
\end{bmatrix}
\]
and thus
\[
u_{33} = \epsilon, \quad u_{23} = \epsilon ir +s \quad\text{ and }\quad \Re(u_{13}) = \dfrac{-\epsilon(r^2 + s^2)}{2}.
\]
One can now indeed easily check that
\[
U = \begin{bmatrix}
\epsilon & \epsilon ir - s & u\\
0 & \epsilon & \epsilon ir + s\\
0 & 0 & \epsilon
\end{bmatrix}\qquad A = \begin{bmatrix}
0 & \epsilon & s\\
0 & 0 & \epsilon\\
0& 0 & 0
\end{bmatrix},\quad \begin{array}{l}\epsilon\in\llbrace -1,1\rrbrace, s\in\mathbb{R}\\[8pt]
\Re(u) = \dfrac{-\epsilon(s^2 + r^2)}{2}\end{array}
\]
is a semicommuting $H$-polar decomposition. Since
\[
AU = \begin{bmatrix}
0 & 1 & ir + 2\epsilon s\\
0 & 0 & 1\\
0 & 0 & 0
\end{bmatrix},
\]
it commutes if and only if $s = 0$.

\newTypeProof
Recall
\[
X = \begin{bmatrix}
0 & \cos\alpha & \sin\alpha & 0\\
0 & 0 & 0 &1\\
0 & 0 & 0 & 0\\
0 &0 & 0 & 0
\end{bmatrix},\quad 0 < \alpha \leq \dfrac{\pi}{2},\quad H = \begin{bmatrix}
0 & 0 & 0 & 1\\
0 & 1 & 0 & 0\\
0 & 0 & 1 & 0\\
1 & 0 & 0 & 0
\end{bmatrix}.
\]
From equations \eqref{eq:selfadj} and \eqref{eq:kernel}, we get
\[
A = \begin{bmatrix}
0 & a_{12}\cos\alpha & a_{12}\sin\alpha & a_{14}\\
0 & a_{32}\cot\alpha\cos\alpha & a_{32}\cos\alpha & \conj{a_{12}}\cos\alpha\\
0 & a_{32}\cos\alpha & a_{32}\sin\alpha & \conj{a_{12}}\sin\alpha\\
0 & 0 & 0 & 0
\end{bmatrix},\quad a_{14},a_{32}\in\mathbb{R}, a_{12}\in\mathbb{C}
\]
and thus
\[
AX = \begin{bmatrix}
0 & 0 & 0 & a_{12}\cos\alpha\\
0 & 0 & 0 & \cdots\\
0 & 0 & 0 & \cdots\\
0 & 0 & 0 & 0
\end{bmatrix}\quad\text{ and }\quad XA = \begin{bmatrix}
0 & \cdots & a_{32} & \conj{a_{12}}\\
0 & 0 & 0 & 0\\
0 & 0 & 0 & 0\\
0 & 0 & 0 & 0
\end{bmatrix}.
\]
With \eqref{eq_semi}, this gives $a_{32} = 0$ and $\conj{a_{12}} = a_{12}\cos\alpha$ and thus $a_{12} = 0$. Furthermore,
\[
A^2 = \begin{bmatrix}
0 & 0 & 0 & \vert a_{12}\vert^2\\
0 & 0 & 0 & 0\\
0 & 0 & 0 & 0\\
0 & 0 & 0 & 0
\end{bmatrix} \overset{\eqref{eq:A2}}{=}\begin{bmatrix}
0 & 0 & 0 & 1\\
0 & 0 & 0 & 0\\
0 & 0 & 0 & 0\\
0 & 0 & 0 & 0
\end{bmatrix}
\]
gives $\vert a_{12}\vert = 1$.

\stepcounter{subsection}

\newTypeProof
Recall
\[
X = \begin{bmatrix}
0 & 1 & 0 & 0\\
0 & 0 & 0 & z\\
0 & 0 & 0 & 0\\
0 & 0 & 0 & 0
\end{bmatrix},\quad z\in\mathbb{C}, \vert z\vert = 1,\quad H = Z_4.
\]
From equations \eqref{eq:selfadj} and \eqref{eq:kernel}, we get
\[
A = \begin{bmatrix}
0 & a_{12} & 0 & a_{14}\\
0 & 0 & 0 & 0\\
0 & a_{42} & 0 & \conj{a_{12}}\\
0 & 0 & 0 & 0
\end{bmatrix},\quad a_{14}, a_{42}\in\mathbb{R}, a_{12}\in\mathbb{C}.
\]
Since
\[
AX = \begin{bmatrix}
0 & 0 & 0 & za_{12}\\
0 & 0 & 0 & 0\\
0 & 0 & 0 & za_{42}\\
0 & 0 & 0 & 0 
\end{bmatrix}\quad\text{ and }\quad XA = 0,
\]
equation \eqref{eq_semi} yields $a_{12} = a_{42} = 0$, in contradiction to \eqref{eq:kernel}.

\newTypeProof

Recall
\[
X = \begin{bmatrix}
0 & 1 & -2ir\Im(z) & 0 & 0 & 0\\
0 & 0 & z & r & 0 & \left(2r^2\Im(z)^2 - \frac{s^2}{2} +it\right)z^2\\
0 & 0 & 0 & z & 0 & 0\\
0 & 0 & 0 & 0 & 0 & z^2\\
0 & 0 & 0 & 0 & 0 & s\\
0 & 0 & 0 & 0 & 0 & 0
\end{bmatrix},\quad \begin{array}{ll}r,s,t\in\mathbb{R}, \\s > 0, \\z\in\mathbb{C}, \\\vert z\vert = 1, \\0 < \arg z<\pi\end{array},\quad H = \begin{bmatrix}
0 & 0 & 0 & 1\\
0 & Z_3 & 0 & 0\\
0 & 0 & 1 & 0\\
1 & 0 & 0 & 0
\end{bmatrix}.
\]
From equations \eqref{eq:selfadj} and \eqref{eq:kernel}, we get
\[
A = \begin{bmatrix}
0 & a_{12} & a_{13} & a_{14} & 0 & a_{16}\\
0 & a_{22} & a_{23} & a_{24} & 0 & \conj{a_{14}}\\
0 & a_{32} & a_{33} & \conj{a_{23}} & 0 & \conj{a_{13}}\\
0 & a_{42} & \conj{a_{32}} & \conj{a_{22}} & 0 & \conj{a_{12}}\\
0 & 0 & 0 & 0 & 0 & 0\\
0 & 0 & 0 & 0 & 0 & 0
\end{bmatrix},\quad a_{16},a_{24},a_{33},a_{42}\in\mathbb{R}, a_{12},a_{13},a_{14},a_{22},a_{23},a_{32}\in\mathbb{C}.
\]
With $l := \left(2r^2\Im(z)^2 - \dfrac{s^2}{2} + it\right)z^2$, we obtain
\[
AX = \begin{bmatrix}
0 & 0 & za_{12} & ra_{12} + za_{13} & 0 & la_{12} + z^2a_{14}\\
0 & 0 & za_{22} & ra_{22} + za_{23} & 0 & la_{22} + z^2a_{24}\\
0 & 0 & za_{32} & ra_{32} + za_{33} & 0 & la_{32} + z^2\conj{a_{23}}\\
0 & 0 & za_{42} & ra_{42} + z\conj{a_{32}} & 0 & la_{42} + z^2\conj{a_{22}}\\
0 & 0 & 0 & 0 & 0 & 0\\
0 & 0 & 0 & 0 & 0 & 0
\end{bmatrix}
\]
and
\[
XA = \begin{bmatrix}
0 & a_{22} -2ir\Im(z)a_{32} & a_{23} - 2ir\Im(z)a_{33} & a_{24} - 2ir\Im(z)\conj{a_{23}} & 0 & \conj{a_{14}} - 2ir\Im(z)\conj{a_{13}}\\
0 & za_{32} + ra_{42} & za_{33} + r\conj{a_{32}} & z\conj{a_{23}} + r\conj{a_{22}} & 0 & z\conj{a_{13}} + r\conj{a_{12}}\\
0 & za_{42} & z\conj{a_{32}} & z\conj{a_{22}} & 0 & z\conj{a_{12}}\\
0 & 0 & 0 & 0 & 0 & 0\\
0 & 0 & 0 & 0 & 0 & 0\\
0 & 0 & 0 & 0 & 0 & 0
\end{bmatrix}.
\]
Equation \eqref{eq_semi} then gives $a_{42} = a_{32} = a_{22} = a_{33} = 0$, $a_{23} = za_{12}$ and $a_{23} = \conj{a_{23}}$. This then gives $a_{23} = u\in\mathbb{R}$ and $a_{12} = \conj{z}u$. This simplifies \eqref{eq_semi} to
\begin{equation}\label{eq:eklig_im_re}
\begin{bmatrix}
r\conj{z}u + za_{13} & l\conj{z}u + z^2a_{14}\\
zu & z^2a_{24}
\end{bmatrix} = \begin{bmatrix}
a_{24} - 2ir\Im(z)u & \conj{a_{14}} - 2ir\Im(z)\conj{a_{13}}\\
zu & z\conj{a_{13}} + rzu
\end{bmatrix}
\end{equation}
which particularly gives
\[
\conj{za_{14}} - 2ir\conj{z}\Im(z)\conj{a_{13}} = \left(2r^2\Im(z) 2 - \dfrac{s^2}{2} + it\right)u + za_{14}
\]
i.e.
\[
\Im(za_{14}) = r\Im(z)\conj{za_{13}} + \left(ir^2\Im(z)^2 - \dfrac{is^2}{4} - \dfrac{t}{2}\right)u.
\]
This means that the right hand side must be a real number and thus
\begin{equation}\label{eq:eklig2}
0 = \Im\left(r\Im(z)\conj{za_{13}} + \left(ir^2\Im(z)^2 - \dfrac{is^2}{4} - \dfrac{t}{2}\right)u\right) = r\Im(z)\Im(\conj{za_{13}}) + r^2u\Im(z)^2 - \dfrac{s^2u}{4}.
\end{equation}
Furthermore, \eqref{eq:eklig_im_re} implies
\[
a_{24} - 2ir\Im(z)u = r\conj{z}u + za_{13}\quad\text{ i.e. }\quad a_{24} = 2ir\Im(z)u + r\conj{z}u + za_{13}
\]
and
\[
z^2a_{24} = z\conj{a_{13}} + rzu\quad\text{ i.e. }\quad a_{24} = \conj{za_{13}} + r\conj{z}u.
\]
Combining these two equations yields
\[
\Im(\conj{za_{13}}) = ru\Im(z).
\]
Using \eqref{eq:eklig2} and $s > 0$, this finally gives $u = 0$ and thus $a_{12} = 0$, in contradiction to \eqref{eq:kernel}.

\newTypeProof
Recall
\[
X = \begin{bmatrix}
0 & 0 & 1 & 0 & 0\\
0 & 0 & 0 & 1 & 0\\
0 & 0 & 0 & z & 0\\
0 & 0 & 0 & 0 & 0\\
0 & 0 & 0 & 0 & 0
\end{bmatrix},\quad z\in\mathbb{C}, \vert z\vert = 1,\quad H = \begin{bmatrix}
0 & 0 & I_2\\
0 & 1 & 0\\
I_2 & 0 & 0
\end{bmatrix}.
\]
It follows directly from \ref{theo:commuting_factors} that no commuting $H$-polar decomposition can exist. On the other hand, it is easily verified that the given decomposition is a semicommuting $H$-polar decomposition. 

\newTypeProof
Recall
\[
X = \begin{bmatrix}
0 & 0 & 1 & 0 & 0 & 0 & 0\\
0 & 0 & 0 & 1 & 0 & 0 & 0\\
0 & 0 & 0 & 0 & 0 & -z_1\conj{z_2}\cos\alpha & \sin\alpha\cos\beta\\
0 & 0 & 0 & 0 & 0 & z_1\sin\alpha & z_2\cos\alpha\cos\beta\\
0 & 0 & 0 & 0 & 0 & 0 & \sin\beta\\
0 & 0 & 0 & 0 & 0 & 0 & 0\\
0 & 0 & 0 & 0 & 0 & 0 & 0
\end{bmatrix},\quad \begin{array}{ll}z_1,z_2\in\mathbb{C}, \\\vert z_1\vert = \vert z_2\vert = 1\\ 0<\alpha,\beta\leq \dfrac{\pi}{2}\end{array},\quad H = \begin{bmatrix}
0 & 0 & I_2\\
0 & I_3 & 0\\
I_2 & 0 & 0
\end{bmatrix}.
\]
It follows from \eqref{eq:selfadj} and \eqref{eq:kernel}, that
\[
A = \begin{bmatrix}
0 & 0 & a_{13} & a_{14} & 0 & a_{16} & a_{17}\\
0 & 0 & a_{23} & a_{24} & 0 & \conj{a_{17}} & a_{27}\\
0 & 0 & a_{33} & a_{34} & 0 & \conj{a_{13}} & \conj{a_{23}}\\
0 & 0 & \conj{a_{34}} & a_{44} & 0 & \conj{a_{14}} & \conj{a_{24}}\\
0 & 0 & 0 & 0 & 0 & 0 & 0\\
0 & 0 & 0 & 0 & 0 & 0 & 0\\
0 & 0 & 0 & 0 & 0 & 0 & 0
\end{bmatrix},\quad \begin{array}{l}a_{16},a_{27},a_{33},a_{44}\in\mathbb{R},\\ a_{13},a_{14},a_{17}, a_{23}, a_{24}, a_{34}\in\mathbb{C}\end{array}.
\]
It is
\[
AX = \begin{bmatrix}
0 & 0 & 0 & 0 & 0 & -z_1\conj{z_2}a_{13}\cos\alpha + a_{14}z_1\sin\alpha & a_{13}\sin\alpha\cos\beta + a_{14}z_2\cos\alpha\cos\beta\\
0 & 0 & 0 & 0 & 0 & -z_1\conj{z_2}a_{23}\cos\alpha + a_{24}z_1\sin\alpha & a_{23}\sin\alpha\cos\beta + a_{24}z_2\cos\alpha\cos\beta\\
0 & 0 & 0 & 0 & 0 & \cdots & \cdots\\
0 & 0 & 0 & 0 & 0 & \cdots & \cdots\\
0 & 0 & 0 & 0 & 0 & 0 & 0\\
0 & 0 & 0 & 0 & 0 & 0 & 0\\
0 & 0 & 0 & 0 & 0 & 0 & 0
\end{bmatrix},
\]
\[
XA = \begin{bmatrix}
0 & 0 & \cdots & \cdots & 0 & \conj{a_{13}} & \conj{a_{23}}\\
0 & 0 & \cdots & \cdots & 0 & \conj{a_{14}} & \conj{a_{24}}\\
0 & 0 & 0 & 0 & 0 & 0 & 0\\
0 & 0 & 0 & 0 & 0 & 0 & 0\\
0 & 0 & 0 & 0 & 0 & 0 & 0\\
0 & 0 & 0 & 0 & 0 & 0 & 0\\
0 & 0 & 0 & 0 & 0 & 0 & 0
\end{bmatrix}
\]
and thus with \eqref{eq_semi}
\[
\llbrace\begin{array}{rcl}
-z_1\conj{z_2}\cos\alpha\cdot a_{13} + z_1\sin\alpha\cdot a_{14} &=& \conj{a_{13}}\\
\sin\alpha\cos\beta\cdot a_{13} + z_2\cos\alpha\cos\beta\cdot a_{14} &=& \conj{a_{23}}\\
-z_1\conj{z_2}\cos\alpha\cdot a_{23} + z_1\sin\alpha\cdot a_{24} &=& \conj{a_{14}}\\
\sin\alpha\cos\beta\cdot a_{23} + z_2\cos\alpha\cos\beta\cdot a_{24} &=& \conj{a_{24}}
\end{array}\right..
\]
This system admits* the unique solution $a_{13} = a_{14} = a_{23} = a_{24} = 0$ implying $\ker A \neq \ker X$ in contradiction too \eqref{eq:kernel}. 

\newTypeProof
Recall
\[
X = \begin{bmatrix}
0 & 0 & 1 & 0 & 0 & 0 & 0 & 0\\
0 & 0 & 0 & 1 & 0 & 0 & 0 & 0\\
0 & 0 & 0 & 0 & 0 & 0 & -z_1\conj{z_2}\sin\alpha\cos\beta & \cos\alpha\cos\gamma\\
0 & 0 & 0 & 0 & 0 & 0 & z_1\cos\alpha\cos\beta & z_2\sin\alpha\cos\gamma\\
0 & 0 & 0 & 0 & 0 & 0 & \sin\beta & 0\\
0 & 0 & 0 & 0 & 0 & 0 & 0 & \sin\gamma\\
0 & 0 & 0 & 0 & 0 & 0 & 0 & 0\\
0 & 0 & 0 & 0 & 0 & 0 & 0 & 0
\end{bmatrix},\quad\begin{array}{ll}
z_1,z_2\in\mathbb{C}\\
\vert z_1\vert = \vert z_2\vert = 1\\[6pt]
0\leq \alpha < \dfrac{\pi}{2}\\
0 < \beta < \gamma\leq \dfrac{\pi}{2}
\end{array},\quad H = \begin{bmatrix}
0 & 0 & I_2\\
0 & I_4 & 0\\
I_2 & 0 & 0
\end{bmatrix}.
\]
It follows from \eqref{eq:selfadj} and \eqref{eq:kernel}, that
\[
A = \begin{bmatrix}
0 & 0 & a_{13} & a_{14} & 0 & 0 & a_{17} & a_{18}\\
0 & 0 & a_{23} & a_{24} & 0 & 0 & \conj{a_{18}} & a_{28}\\
0 & 0 & a_{33} & a_{34} & 0 & 0 & \conj{a_{13}} & \conj{a_{23}}\\
0 & 0 & \conj{a_{34}} & a_{44} & 0 & 0 & \conj{a_{14}} & \conj{a_{24}}\\
0 & 0 & 0 & 0 & 0 & 0 & 0 & 0\\
0 & 0 & 0 & 0 & 0 & 0 & 0 & 0\\
0 & 0 & 0 & 0 & 0 & 0 & 0 & 0\\
0 & 0 & 0 & 0 & 0 & 0 & 0 & 0
\end{bmatrix},\quad \begin{array}{l}a_{17},a_{28},a_{33},a_{44}\in\mathbb{R},\\ a_{13},a_{14},a_{18}, a_{23}, a_{24}, a_{34}\in\mathbb{C}\end{array}.
\]
It is
\[
AX = \begin{bmatrix}
0 & 0 & 0 & 0 & 0 & 0 & -z_1\conj{z_2}a_{13}\sin\alpha\cos\beta + a_{14}z_1\cos\alpha\cos\beta & a_{13}\cos\alpha\cos\gamma + a_{14}z_2\sin\alpha\cos\gamma\\
0 & 0 & 0 & 0 & 0 & 0 & -z_1\conj{z_2}a_{23}\sin\alpha\cos\beta + a_{24}z_1\cos\alpha \cos\beta& a_{23}\cos\alpha\cos\gamma + a_{24}z_2\sin\alpha\cos\gamma\\
0 & 0 & 0 & 0 & 0 & 0 & \cdots & \cdots\\
0 & 0 & 0 & 0 & 0 & 0 & \cdots & \cdots\\
0 & 0 & 0 & 0 & 0 & 0 & 0 & 0\\
0 & 0 & 0 & 0 & 0 & 0 & 0 & 0\\
0 & 0 & 0 & 0 & 0 & 0 & 0 & 0\\
0 & 0 & 0 & 0 & 0 & 0 & 0 & 0
\end{bmatrix},
\]
\[
XA = \begin{bmatrix}
0 & 0 & \cdots & \cdots & 0 & 0 & \conj{a_{13}} & \conj{a_{23}}\\
0 & 0 & \cdots & \cdots & 0 & 0 & \conj{a_{14}} & \conj{a_{24}}\\
0 & 0 & 0 & 0 & 0 & 0 & 0 & 0\\
0 & 0 & 0 & 0 & 0 & 0 & 0 & 0\\
0 & 0 & 0 & 0 & 0 & 0 & 0 & 0\\
0 & 0 & 0 & 0 & 0 & 0 & 0 & 0\\
0 & 0 & 0 & 0 & 0 & 0 & 0 & 0\\
0 & 0 & 0 & 0 & 0 & 0 & 0 & 0
\end{bmatrix}
\]
and thus with \eqref{eq_semi}
\[
\llbrace\begin{array}{rcl}
-z_1\conj{z_2}\sin\alpha\cos\beta\cdot a_{13} + z_1\cos\alpha\cos\beta\cdot a_{14} &=& \conj{a_{13}}\\
\cos\alpha\cos\gamma\cdot a_{13} + z_2\sin\alpha\cos\gamma\cdot a_{14} &=& \conj{a_{23}}\\
-z_1\conj{z_2}\sin\alpha\cos\beta\cdot a_{23} + z_1\cos\alpha\cos\beta\cdot a_{24} &=& \conj{a_{14}}\\
\cos\alpha\cos\gamma\cdot a_{23} + z_2\sin\alpha\cos\gamma\cdot a_{24} &=& \conj{a_{24}}
\end{array}\right..
\]
This system admits* the unique solution $a_{13} = a_{14} = a_{23} = a_{24} = 0$ implying $\ker A \neq \ker X$ in contradiction too \eqref{eq:kernel}.

\end{appendices}

\end{document}